\newcommand{\R}{\mathbb{R}}
\newcommand{\N}{\mathbb{N}}
\newcommand{\ep}{\varepsilon}
\newcommand{\pa}{\partial}
\newcommand{\lr}[1]{{}\langle{}#1{}\rangle{}}
\newtheorem{theorem}{Theorem}[section]
\newtheorem{lemma}[theorem]{Lemma}
\newtheorem{proposition}[theorem]{Proposition}
\newtheorem{corollary}[theorem]{Corollary}
\theoremstyle{remark}
\newtheorem{remark}{Remark}[section]
\theoremstyle{definition}
\newtheorem{definition}{Definition}[section]
\numberwithin{equation}{section}
\def\@cite#1#2{[{{\bfseries #1}\if@tempswa , #2\fi}]}
\begin{document}
{\Large\bf
\begin{center}
On global existence for semilinear wave
equations with space-dependent
critical damping
\end{center}
}

\vspace{5pt}

\begin{center}
Motohiro Sobajima%
\footnote{
Department of Mathematics, 
Faculty of Science and Technology, Tokyo University of Science,  
2641 Yamazaki, Noda-shi, Chiba, 278-8510, Japan,  
E-mail:\ {\tt msobajima1984@gmail.com}}
\end{center}

\newenvironment{summary}{\vspace{.5\baselineskip}\begin{list}{}{%
     \setlength{\baselineskip}{0.85\baselineskip}
     \setlength{\topsep}{0pt}
     \setlength{\leftmargin}{12mm}
     \setlength{\rightmargin}{12mm}
     \setlength{\listparindent}{0mm}
     \setlength{\itemindent}{\listparindent}
     \setlength{\parsep}{0pt}
     \item\relax}}{\end{list}\vspace{.5\baselineskip}}
\begin{summary}
{\footnotesize {\bf Abstract.}
The global existence for 
semilinear wave equations with 
space-dependent critical damping 
$\partial_t^2u-\Delta u+\frac{V_0}{|x|}\partial_t u=f(u)$ 
in an exterior domain is dealt with, where 
$f(u)=|u|^{p-1}u$ and $f(u)=|u|^p$ are in mind. 
Existence and non-existence of global-in-time solutions 
are discussed.  
To obtain global existence, 
a weighted energy estimate for the linear problem 
is crucial. 
The proof of such a weighted energy estimate 
contains an alternative proof of 
energy estimates established 
by Ikehata--Todorova--Yordanov [J.\ Math.\ Soc.\ Japan (2013), 183--236] but this clarifies the precise independence of the location of the support of initial data.
The blowup phenomena is verified by using a test function method 
with positive harmonic functions satisfying the Dirichlet boundary condition.
}
\end{summary}

{\footnotesize{\textit{Mathematics Subject Classification}}\/ (2010): %
Primary:%
	35L71, 
    35A01, 
Secondary:%
    35L20, 
	35B40, 
}

{\footnotesize{\it Key words and phrases}\/: 
semilinear wave equations, 
space-dependent damping, 
critical damping, global existence, 
critical exponent.
}

\section{Introduction}
In this paper we consider
the following initial-boundary value problem  
\begin{align}\label{P}
\begin{cases}
\pa_t^2u(x,t)-\Delta u(x,t) +\dfrac{V_0}{|x|}\pa_tu(x,t)
=f\big(u(x,t)\big)
& \text{in}\ \Omega\times(0,T),
\\
u(x,t)=0& \text{on}\ \pa\Omega\times(0,T), 
\\
(u_0,u_1)(0)=(u_0,u_1)& \text{in}\ \Omega, 
\end{cases}
\end{align}
where $\Omega$ is an exterior domain in $\R^N$ $(N\geq 3)$ 
with smooth boundary $\pa\Omega$ and $0\notin \overline{\Omega}$,
and $f(u)=|u|^{p-1}u$ (or $f(u)=|u|^p$). 
The constant $V_0>0$ describes the effect of the damping term. 
If we only focus on the scaling structure of the equation in \eqref{P}, 
then the solution $u$ and the scale parameter $\lambda>0$ 
give another solution $\lambda^{-\frac{2}{p-1}}u(\lambda x,\lambda t)$. 
In this sense, the damping term of the form 
$|x|^{-1}$ is scale-critical 
and therefore the constant in front of $|x|^{-1}$ plays a crucial role. 
Despite of this, the damping coefficient 
$V_0|x|^{-1}$ is bounded because of the setting for the domain $\Omega$.
Our interest is that 
the problem \eqref{P} having the scale-critical damping term
 possesses nontrivial 
global-in-time solutions or not. 

If $\Omega=\R^N$, $V_0=0$, then the problem \eqref{P} is the
semilinear wave equation
with power type nonlinearities
\begin{align}\label{eq:usualwave}
\begin{cases}
\pa_t^2u(x,t)-\Delta u(x,t) 
=|u(x,t)|^p
& \text{in}\ \R^N\times(0,T),
\\
(u_0,u_1)(0)=(u_0,u_1)& \text{in}\ \R^N, 
\end{cases}
\end{align}
It is proved in John \cite{John1979} for the three dimensional case that 
\begin{itemize}
\item if $1<p<1+\sqrt{2}$, then \eqref{eq:usualwave} does not have nontrivial 
global-in-time solutions;
\item if $p>1+\sqrt{2}$, then \eqref{eq:usualwave} possesses 
nontrivial global-in-time solutions. 
\end{itemize}
There are many subsequent papers 
dealing with the semilinear wave equations 
in $N$-dimensional space
(see e.g., 
Kato \cite{Kato1980}, Yordanov--Zhang \cite{YZ2006}, Zhou \cite{Zhou2007}
and their references therein), and then 
the critical exponent, that is, 
the threshold for dividing existence and non-existence of global-in-time solutions is clarified as 
the positive root of the quadratic equation
\[
\gamma(N,p)=2+(N+1)p-(N-1)p^2=0
\]
which is so-called  Strauss exponent given by 
\[
p_S(N)=\frac{N+1+\sqrt{N^2+10N-7}}{2(N-1)}\quad (N\geq2).
\]

In the case of the semilinear wave equation with constant damping 
\begin{align}\label{eq:usualdw}
\begin{cases}
\pa_t^2u(x,t)-\Delta u(x,t) 
+\pa_tu(x,t)=|u(x,t)|^{p}
& \text{in}\ \Omega\times(0,T),
\\
u(x,t)=0& \text{in}\ \pa\Omega\times(0,T),
\\
(u_0,u_1)(0)=(u_0,u_1)& \text{in}\ \Omega, 
\end{cases}
\end{align}
in Todorova--Yordanov \cite{ToYo2001}, 
Zhang \cite{Zhang2001} and Ikehata--Tanizawa \cite{IT2005},
it is proved for \eqref{eq:usualdw} with $\Omega=\R^N$ that
existence of global-in-time solutions if $1+\frac{2}{N}<p<\frac{N+2}{N-2}$
and non-existence of those if $1<p\leq 1+\frac{2}{N}$. 
This threshold $1+\frac{2}{N}$ is exactly the same as 
the Fujita exponent for the semilinear heat equation found 
in Fujita \cite{Fujita1966}. 
A certain low regularity solutions of \eqref{eq:usualdw} 
in the class $(H^{\alpha,0}\cap H^{0,\delta})\times(H^{\alpha-1,0}\cap H^{0,\delta})$
is considered in Hayashi--Kaikina--Naumkin \cite{HKN2004}, 
where $\mathcal{F}$ is the Fourier transform and
\[
H^{\ell,m}=\{\phi\in L^2(\R^N)\;;\;\lr{x}^{m}\lr{|\nabla|}^{\ell}\phi\in L^2\}, 
\quad \lr{|\nabla|}=\mathcal{F}^{-1}\lr{\xi}\mathcal{F}, 
\quad \lr{\xi}=\sqrt{1+|\xi|^2}.
\]
The exterior domain case has been considered 
in Ono \cite{Ono2003} 
by using the result of Dan--Shibata \cite{DS1995}. 
The existence result for the exterior problem \eqref{eq:usualdw}
with $N=2$, $2<p<\infty$ is given in Ikehata \cite{Ikehata2004JDE}. 
The non-existence result for this problem
with $N\geq 2$ and $1<p<1+\frac{2}{N}$ 
is proved in Takeda--Ogawa \cite{OT2009}. Therefore 
the critical exponent for \eqref{eq:usualdw} is 
given by the Fujita exponent $p=1+\frac{2}{N}$.

We move to the problem with space-dependent damping. 
The problem is the following:
\begin{align}\label{eq:usualdw2}
\begin{cases}
\pa_t^2u(x,t)-\Delta u(x,t) 
+\dfrac{a}{\lr{x}^{\alpha}}\pa_tu(x,t)=|u(x,t)|^{p}
& \text{in}\ \R^N\times(0,T),
\\
(u_0,u_1)(0)=(u_0,u_1)& \text{in}\ \R^N, 
\end{cases}
\end{align}
where $a>0$ is positive constant. In this case, Ikehata--Todorova--Yordanov \cite{ITY2009} 
proved that $p=1+\frac{2}{N-\alpha}$ is the critical exponent for this problem for $\alpha\in (0,1)$. This result can be regarded as a generalization 
of constant damping case $\alpha=0$. 
The critical exponent for the case $\alpha<0$ is still given by $p=1+\frac{2}{N-\alpha}$ in Nishihara--Sobajima--Wakasugi \cite{NSW2018}. 
On the other hand, 
if $\alpha>1$, 
then 
Li--Tu \cite{LT2020} proved 
that the critical exponent for \eqref{eq:usualdw2} is given by $p=p_S(N)$.
This means that the case $\alpha>1$ is close to the problem \eqref{eq:usualwave}. Therefore $\alpha=1$ is the threshold 
for the structure of the critical exponent. 
If $\alpha=1$, then 
Ikehata--Todorova--Yordanov \cite{ITY2013}, 
proved that 
the linear solutions with compactly supported initial data 
satisfies the following energy estimate
\begin{align*}
\int_{\R^N}
\Big(|\nabla u(x,t)|^2+(\pa_tu(x,t))^2\Big)
\,dx\leq 
\begin{cases}
C(1+t)^{-a}, &\text{if}\ 1<a<N, 
\\
C_\ep(1+t)^{-N+\ep}, &\text{if}\ a\geq N,
\end{cases}
\end{align*}
where the constants $C$ and $C_\ep$ ($\ep$ is arbitrary) 
depend on the location of the support of initial data. 
One can observe that the behavior of solutions strongly depends on
the size of the constant $V_0$. 
For the semilinear problem, non-existence of global-in-time solutions 
for the case $1<p\leq 1+\frac{2}{N-1}$ can be found in Li \cite{Li2013}. 

Then  we come back to our problem \eqref{P}. 
In the whole space case $\Omega=\R^N$ ($N\geq 3$), in Ikeda--Sobajima \cite{IkSo_FE}, 
non-existence of global-in-time solutions to \eqref{P} when 
$0<V_0<\frac{(N-1)^2}{N+1}$ and $\frac{N}{N-1}<p\leq p_S(N+V_0)$. 
It should be notice that 
although the critical exponent is not determined yet, 
one can find that the structure of solutions to \eqref{P} strongly depends on 
the parameter $V_0$. 
A further development about problems with 
singular dampings and potentials can be found in Dai--Kubo--Sobajima \cite{DKS2021}. 

As a summary, existence of global-in-time solutions 
to \eqref{P} is still open so far. 
To discuss such a situation, we need to clarify 
the dependence of 
the behavior of linear solutions 
with respect to the parameter $V_0$. 
The purpose of the present paper is to address 
existence of global-in-time solutions to \eqref{P}. 
Although we can treat the singular damping case $\Omega=\R^N$, 
to avoid a complicated discussion, 
we only deal with the case of exterior domain $\Omega$ with 
smooth boundary $\pa\Omega$ and $0\notin \overline{\Omega}$.  

Before stating our result, 
we give a definition of solutions to \eqref{P}. 
\begin{definition}
The function $u$ is called a weak solution 
of the problem \eqref{P} in $(0,T)$ if 
\[
u\in C([0,T);H_{0}^1(\Omega))\cap 
C^1([0,T);L^2(\Omega))
\cap C([0,T);L^{2p}(\Omega))
\]
and the pair $\mathcal{U}(t)={}^t(u(t),\pa_tu(t))$ satisfies
\[
\mathcal{U}(t)
=
e^{-t\mathcal{A}}
\begin{pmatrix}
u_0\\ 
u_1
\end{pmatrix}
+
\int_{0}^t
e^{-(t-s)\mathcal{A}}[\mathcal{N}(\mathcal{U}(s))]
\,ds, 
\]
where $\{e^{-t\mathcal{A}}\}_{t\geq 0}$ 
is the $C_0$-semigroup on the Hilbert space 
$\mathcal{H}=H^1_0(\Omega)\times L^2(\Omega)$ 
generated by 
\[
-\mathcal{A}=\begin{pmatrix}
0&1\\ 
\Delta & -V_0/|x|
\end{pmatrix}
\]
endowed with the domain 
$D(\mathcal{A})=[H^2(\Omega)\cap H^1_0(\Omega)]\times H^1_0(\Omega)$
and $\mathcal{N}({}^t(u,v))={}^t(0,|u|^{p-1}u)$. 
If $T<\infty$, then $u$ is called a local-in-time weak solution, 
and 
if $T=\infty$, then $u$ is called a global-in-time weak solution. 
\end{definition}
By the standard argument based on the semigroup theory 
(see e.g., Cazenave--Haraux \cite{CHbook}, or Ikeda--Sobajima \cite{IkSo_FE}), 
it is not difficult to find local-in-time (weak) solutions 
of \eqref{P} for every initial data. 
Moreover, it is well-known that 
the problem \eqref{P} has the finite propagation property. 
\begin{proposition}\label{prop:localsol}
Assume that $V_0\geq 0$ and $1<p<\frac{N}{N-2}$. 
Then for every ${}^t(u_0,u_1)\in H_0^1(\Omega)\times  L^2(\Omega)$, 
there exists a positive constant $T=T(u_0,u_1)\in (0,\infty]$ such that 
\eqref{P} has a unique weak solution $u$ in $(0,T)$. 
Moreover, 
if ${\rm supp}\,u_0\cup {\rm supp}\,u_1\subset \overline{B(0,R_0)}$, 
then for every $t\in (0,T)$, ${\rm supp}\,u(t)\subset \overline{B(0,R_0+t)}$. 
\end{proposition}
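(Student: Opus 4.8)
The plan is to treat the two assertions separately: local existence and uniqueness by a contraction-mapping argument built on the linear semigroup, and finite propagation by a local energy estimate on backward light cones. The generation of the $C_0$-semigroup $\{e^{-t\mathcal{A}}\}_{t\ge 0}$ on $\mathcal{H}=H^1_0(\Omega)\times L^2(\Omega)$ is already built into the definition of weak solution (it follows from the Lumer--Phillips theorem, since the damping coefficient $V_0/|x|$ is bounded on $\Omega$ because $0\notin\overline{\Omega}$, so $-\mathcal{A}$ is a bounded dissipative perturbation of the generator of the free wave group); I would take this as given and concentrate on the nonlinear and geometric parts.

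First I would verify that the Nemytskii map $\mathcal{N}({}^t(u,v))={}^t(0,|u|^{p-1}u)$ is locally Lipschitz from $\mathcal{H}$ into itself. The restriction $1<p<\frac{N}{N-2}$ is exactly what makes the embedding $H^1_0(\Omega)\hookrightarrow L^{2p}(\Omega)$ hold with $2p<\frac{2N}{N-2}$ (subcritical). Combined with the elementary inequality $\big||a|^{p-1}a-|b|^{p-1}b\big|\le C\big(|a|^{p-1}+|b|^{p-1}\big)|a-b|$ and H\"older's inequality, this gives $\big\||u|^{p-1}u-|\tilde u|^{p-1}\tilde u\big\|_{L^2}\le C\big(\|u\|_{L^{2p}}^{p-1}+\|\tilde u\|_{L^{2p}}^{p-1}\big)\|u-\tilde u\|_{L^{2p}}$, so that $\mathcal{N}$ is Lipschitz on bounded subsets of $\mathcal{H}$. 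I would then set up the map $\Phi(\mathcal{U})(t)=e^{-t\mathcal{A}}{}^t(u_0,u_1)+\int_0^t e^{-(t-s)\mathcal{A}}\mathcal{N}(\mathcal{U}(s))\,ds$ on a closed ball of $C([0,T];\mathcal{H})$ and, using the uniform boundedness of the semigroup on $[0,T]$ together with the above Lipschitz bound, show that $\Phi$ is a contraction for $T=T(u_0,u_1)$ sufficiently small. Banach's fixed point theorem yields a unique local weak solution; uniqueness on the whole interval and the maximal existence time follow from a standard Gronwall/continuation argument.

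For the finite propagation property I would argue by a local energy inequality. Fix a point $(x_0,t_0)$ with $|x_0|>R_0+t_0$ and consider the backward light cone $K=\{(x,t):|x-x_0|\le t_0-t,\ 0\le t\le t_0\}$; its base $\{t=0\}\cap\overline{B(x_0,t_0)}$ is disjoint from $\overline{B(0,R_0)}$ by the triangle inequality, so the data vanish there. For the quantity $E(t)=\frac12\int_{|x-x_0|\le t_0-t}\big(|\pa_t u|^2+|\nabla u|^2+|u|^2\big)\,dx$ one has $E(0)=0$. Multiplying the equation by $\pa_t u$ and integrating over the cross-section, the contribution of the shrinking lateral boundary combines with the flux term to give a nonpositive boundary contribution (by Cauchy--Schwarz, $\pa_\nu u\,\pa_t u\le \frac12(|\pa_t u|^2+|\nabla u|^2)$), and the damping term $-\int\frac{V_0}{|x|}|\pa_t u|^2\,dx$ is nonpositive since $V_0\ge 0$. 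The nonlinear term is controlled by $\int|u|^{p}|\pa_t u|\,dx\le \|u\|_{L^{2p}}^{p-1}\,\|u\|_{L^{2p}}\,\|\pa_t u\|_{L^2}$, and since $u\in C([0,T);L^{2p})$ the factor $\|u\|_{L^{2p}}^{p-1}$ is bounded on $[0,t_0]$; the remaining factors are bounded by the local $H^1$ norm through $H^1_0\hookrightarrow L^{2p}$, so this term, together with the one coming from $\frac{d}{dt}\frac12\|u\|_{L^2}^2$, is bounded by $CE(t)$. This yields $E'(t)\le CE(t)$, and Gronwall's lemma with $E(0)=0$ forces $E\equiv 0$ on $[0,t_0]$, in particular $u(x_0,t_0)=0$; as $(x_0,t_0)$ was arbitrary, $\mathrm{supp}\,u(t)\subset\overline{B(0,R_0+t)}$.

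The main obstacle is rigor in the energy step: a weak (mild) solution need not a priori have enough regularity to justify multiplying by $\pa_t u$ and integrating by parts on the cone. I would resolve this in the usual way, by first proving the energy identity for data in $D(\mathcal{A})$, where the solution is strong and the computation is legitimate, and then passing to general data in $\mathcal{H}$ by density together with the continuous dependence supplied by the contraction argument. A secondary technical point is the dependence of the Sobolev constant for $H^1(B_r)\hookrightarrow L^{2p}(B_r)$ on the radius $r=t_0-t$; since $r$ ranges over the bounded interval $[0,t_0]$ this constant stays uniformly bounded, so it does not affect the Gronwall conclusion.
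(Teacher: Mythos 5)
The paper does not actually prove Proposition \ref{prop:localsol}; it is stated as a standard consequence of semigroup theory (with references to Cazenave--Haraux and Ikeda--Sobajima) together with the ``well-known'' finite propagation property, so there is no in-paper argument to compare against. Your proposal is precisely the standard argument being invoked: local Lipschitz continuity of the Nemytskii map on $\mathcal{H}$ via the subcritical embedding $H^1_0(\Omega)\hookrightarrow L^{2p}(\Omega)$ (which is exactly where $p<\tfrac{N}{N-2}$ enters), Banach fixed point for the Duhamel map, and a backward-light-cone energy estimate for the support statement, with the regularization via $D(\mathcal{A})$ correctly flagged. Two small points deserve attention. First, in the exterior domain the cone cross-sections meet $\pa\Omega$, and you should note that the boundary flux $\int_{\pa\Omega}\pa_\nu u\,\pa_tu\,dS$ vanishes because the Dirichlet condition forces $\pa_tu=0$ on $\pa\Omega$. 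Second, your closing remark that the constant of $H^1(B_r)\hookrightarrow L^{2p}(B_r)$ ``stays uniformly bounded'' because $r$ ranges over a bounded interval is not correct: testing with $u\equiv 1$ shows the constant behaves like $r^{N(\frac{1}{2p}-\frac{1}{2})}$ and blows up as $r\to 0$, i.e.\ near the tip of the cone. This is harmless but needs a one-line fix: either run the Gronwall argument only on $[0,t_0-\ep]$ for each $\ep>0$ (where $r\geq\ep$) and conclude at $t_0$ by continuity, or bound the nonlinear term by $\|u\|_{L^{2p}(\Omega)}^{2p-2}\|u\|_{L^{2p}(\Omega)}^{2}$ using the global norm, which is bounded on compact time intervals since $u\in C([0,T);L^{2p}(\Omega))$. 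With either repair the argument is complete.
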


Now we are in a position to state our result 
about existence of global-in-time weak solutions 
to \eqref{P}. 
\begin{theorem}\label{thm:main}
Assume that $V_0> N-2$ and $1+\frac{4}{N-2+\min\{N,V_0\}}< p<\frac{N}{N-2}$. 
Then there exists a positive constant $\delta_0$ such that 
if the pair $(u_0,u_1)\in H^1_0(\Omega)\times L^2(\Omega)$ satisfies 
\begin{equation}\label{ass:u_0u_1}
\int_{\Omega}\Big(|\nabla u_0|^2+u_1^2\Big)(1+|x|)^{\frac{4}{p-1}-N+2}\,dx
\leq \delta
\end{equation}
for some $\delta\in (0,\delta_0]$, 
then the problem \eqref{P} possesses 
a unique global-in-time (weak) solution $u$.
Moreover, $u$ satisfies 
\begin{align}\label{est:main}
\int_{\Omega}\Big(|\nabla u(t)|^2+(\pa_tu(t))^2\Big)(1+t+|x|)^{\frac{4}{p-1}-N+2}\,dx
\leq C\delta,
\end{align}
where $C$ is a positive constant depending only on $N$, $p$ and $V_0$. 
\end{theorem}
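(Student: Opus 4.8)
The plan is to upgrade the local solution furnished by Proposition~\ref{prop:localsol} to a global one by means of an a priori weighted energy estimate together with a continuity (bootstrap) argument whose smallness is supplied by \eqref{ass:u_0u_1}. Write $\beta=\frac{4}{p-1}-(N-2)$ for the exponent appearing in \eqref{ass:u_0u_1}--\eqref{est:main} and set $\Phi(t,x)=(1+t+|x|)^{\beta}$. The two constraints on $p$ play complementary roles: the lower bound $p>1+\frac{4}{N-2+\min\{N,V_0\}}$ is equivalent to $\beta<\min\{N,V_0\}$ (while $\beta>0$ is automatic from $p<\frac{N}{N-2}$), which is precisely the range in which the linear weighted energy estimate holds, consistently with the decay rates of Ikehata--Todorova--Yordanov \cite{ITY2013}; the upper bound $p<\frac{N}{N-2}$ is equivalent to $2p<\frac{2N}{N-2}$, i.e.\ energy-subcriticality for Sobolev embedding. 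The standing hypothesis $V_0>N-2$ is exactly what makes this interval of $p$ nonempty, since $1+\frac{4}{N-2+\min\{N,V_0\}}<\frac{N}{N-2}\iff N-2<\min\{N,V_0\}$. Introduce
\[
M(t)=\sup_{0\le s\le t}\int_{\Omega}\big(|\nabla u(s)|^2+(\pa_tu(s))^2\big)\Phi(s,x)\,dx,
\]
so that \eqref{est:main} is the assertion $M(t)\le C\delta$ uniformly in $t$, and \eqref{ass:u_0u_1} reads $M(0)\le\delta$ because $\Phi(0,x)=(1+|x|)^{\beta}$.

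The heart of the matter is a weighted energy identity. First I would test the equation in \eqref{P} against $\Phi\,\pa_tu$; the Dirichlet condition kills the boundary contribution and yields
\begin{align*}
&\frac{1}{2}\frac{d}{dt}\int_{\Omega}\Phi\big(|\nabla u|^2+(\pa_tu)^2\big)\,dx
+\int_{\Omega}\frac{V_0}{|x|}\Phi(\pa_tu)^2\,dx\\
&\qquad=\frac{1}{2}\int_{\Omega}\pa_t\Phi\big(|\nabla u|^2+(\pa_tu)^2\big)\,dx
-\int_{\Omega}\pa_tu\,\nabla\Phi\cdot\nabla u\,dx
+\int_{\Omega}\Phi f(u)\pa_tu\,dx.
\end{align*}
Since $\pa_t\Phi=\beta(1+t+|x|)^{\beta-1}=|\nabla\Phi|$, the cross term and the $(\pa_tu)^2$ growth term are both dominated by the dissipation: using $|x|\le 1+t+|x|$ one has $|\nabla\Phi|\le\frac{\beta}{V_0}\cdot\frac{V_0}{|x|}\Phi$, so that the inequality $\beta<V_0$ lets the damping absorb them. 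The genuinely delicate term is the growth $\frac{\beta}{2}\int(1+t+|x|)^{\beta-1}|\nabla u|^2\,dx$, which carries no damping and cannot be absorbed by the multiplier $\Phi\,\pa_tu$ alone. To produce the missing gradient dissipation I would add a Morawetz-type companion multiplier (of the form $h(t+|x|)\,\tfrac{x}{|x|}\cdot\nabla u$, suitably weighted, together with a lower-order $u$-multiplier) and combine the two identities so that the full pointwise quadratic form in $(\pa_tu,\nabla u)$ becomes nonnegative. The Hardy inequality, valid for $N\ge3$, controls the zeroth-order contributions, and $V_0>N-2$ is the structural condition securing positivity of the combined form. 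Carried out with $\Phi$ depending only on $t+|x|$ rather than on the location of $\operatorname{supp}u_0$, this is the computation that reproves the linear estimate of \cite{ITY2013} with constants independent of the support of the data.

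With the linear mechanism in place, only the nonlinearity remains. Estimating the forcing by Young's inequality against the dissipation,
\[
\int_{\Omega}\Phi f(u)\pa_tu\,dx
\le\tfrac12\int_{\Omega}\frac{V_0}{|x|}\Phi(\pa_tu)^2\,dx
+\frac{1}{2V_0}\int_{\Omega}|x|\,\Phi\,|u|^{2p}\,dx,
\]
I absorb the first term into the damping; after the gradient growth term has been disposed of as above and the identity is integrated in $t$, taking the supremum gives
\[
M(t)\le C\,M(0)+C\int_0^t\!\!\int_{\Omega}|x|\,\Phi\,|u|^{2p}\,dx\,ds.
\]
The remaining space--time integral is controlled by a weighted Hardy--Sobolev inequality: since $2p<\frac{2N}{N-2}$ one may interpolate the weighted $L^{2p}$ norm between $\int\Phi|\nabla u|^2\le M$ and the Hardy-controlled quantity $\int\frac{\Phi}{|x|^2}u^2\le CM$, and the precise value $\beta=\frac{4}{p-1}-(N-2)$ is chosen so that the leftover powers of $1+s+|x|$ combine with the finite-speed support $|x|\le R_0+s$ to make the $s$-integral converge with a superlinear dependence on $M$, namely
\[
\int_0^t\!\!\int_{\Omega}|x|\,\Phi\,|u|^{2p}\,dx\,ds\le C\,M(t)^{p},\qquad p>1.
\]
Together with $M(0)\le\delta$ this yields the closed inequality $M(t)\le C_0\delta+C_1 M(t)^{p}$. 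A standard continuity argument---$M$ is continuous, $M(0)\le\delta$, and $C_1(2C_0\delta)^{p-1}\le\tfrac12$ once $\delta\le\delta_0$---then forces $M(t)\le 2C_0\delta$ on the whole maximal interval of existence. This uniform weighted-energy bound rules out blow-up, so Proposition~\ref{prop:localsol} gives $T=\infty$, and \eqref{est:main} follows.

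I expect the principal obstacle to be the treatment of the undamped gradient growth term in the weighted energy identity: one must design the Morawetz companion multiplier and its weight so that the total pointwise quadratic form is nonnegative under the sole structural conditions $V_0>N-2$ and $0<\beta<\min\{N,V_0\}$. This is precisely the content of the support-independent linear estimate and the technically heaviest part. A secondary difficulty is verifying that the space--time nonlinear integral converges with a genuinely superlinear power of $M$; the Sobolev-subcritical restriction $p<\frac{N}{N-2}$ and the exact value of $\beta$ are indispensable there.
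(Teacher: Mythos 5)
Your overall architecture matches the paper's: reduce to an a priori bound for the weighted energy $M(t)$ with weight $(1+t+|x|)^{\beta}$, $\beta=\frac{4}{p-1}-N+2$, close a superlinear inequality $M\le C_0\delta+C_1M^p$ by continuity, and invoke the blowup alternative. Your reading of the exponent conditions ($\beta<\min\{N,V_0\}$ from the lower bound on $p$, Sobolev subcriticality from the upper bound) is also correct.

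However, there is a genuine gap at exactly the point you flag as ``the principal obstacle'': the linear weighted energy estimate for the full range $0<\beta<\min\{N,V_0\}$. You propose to absorb the undamped gradient growth term by adding a Morawetz-type radial multiplier $h(t+|x|)\tfrac{x}{|x|}\cdot\nabla u$ plus a lower-order $u$-multiplier and arranging a nonnegative quadratic form, but you do not carry this out, and the paper's own computation shows why a multiplier identity alone cannot work in the regime actually needed here. Writing $m=\beta-1\in(N-3,N-1)$, the paper's combined multiplier $E^{\sharp}_{\mu+1}=\widetilde{E}_{\mu+1}+\tfrac{V_0}{2}E^{*}_{\mu}$ (which is the $\partial_tw$-multiplier plus the $w$-multiplier, essentially your proposal) produces in its time derivative the term $-\tfrac{[(N-2)^2-\mu^2]V_0}{4}\int w^2|x|^{\mu-2}\,dx$, whose sign turns unfavorable precisely when $\mu\ge N-2$; since $m$ ranges up to $\min\{N-1,V_0-1\}$, the case $m\ge N-2$ is unavoidable (it occurs whenever $p$ is near the lower endpoint). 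To control the resulting bad term $\int w^2|x|^{m-2}$ the paper introduces an entirely different device: the decomposition $w=v+\partial_tU$, where $v$ solves the parabolic problem $\tfrac{V_0}{|x|}\partial_tv-\Delta v=F$ (analyzed with the Kummer-function special solutions $\Phi_{\beta}$ and the weighted parabolic estimates of Sobajima--Wakasugi) and $U$ solves a damped wave equation forced by $-\partial_tv$. Nothing in your proposal supplies this missing mechanism, so the a priori estimate is not established on the stated range of $p$.

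Two secondary points. First, your nonlinear estimate $\int_0^t\int|x|\Phi|u|^{2p}\,dx\,ds\le CM(t)^p$ is asserted via ``convergence of the $s$-integral''; in the paper the time integral is not required to converge --- the Caffarelli--Kohn--Nirenberg inequality with two choices of weight bounds the integrand by $(E^{\Psi}_{m+1})^{p-1}E^{\Psi}_{m}$, and $\int_0^tE^{\Psi}_{m}\,ds$ is exactly the dissipation term already present in $M(t)$, so no decay in $s$ is needed. Second, the theorem admits non-compactly supported data, whereas your argument uses the support condition $|x|\le R_0+s$; the paper removes this by a logarithmic cutoff $\zeta_n(x)=\eta(\log|x|/n)$ whose commutator errors are $O(n^{-2})$ after Hardy's inequality, followed by continuous dependence on the data. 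You would need to add both of these steps.
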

\begin{remark}
The same assertion as Theorem \ref{thm:main} for \eqref{eq:usualdw2} 
with $\alpha=1$ can be verified by the almost same procedure. 
\end{remark}

\begin{remark}
The rate for the energy decay 
of global-in-time solutions to \eqref{P} is (almost) the same as 
the linear estimates proved by
Ikehata--Todorova--Yordanov \cite{ITY2013}. 
This suggests that the solution of the semilinear problem 
with small initial data behaves like the one of the linear problem. 
\end{remark}
\begin{remark}
The weight $|x|^{\frac{4}{p-1}-N+2}$ may be reasonable in view of scaling
structure for the problem \eqref{P}.  
\end{remark}

If we focus our attention to the case $p>1+\frac{2}{N-1}$, 
we can deduce the following corollary from Theorem \ref{thm:main}.  
\begin{corollary}
Assume that $V_0\geq N$, $\frac{N+1}{N-1}<p<\frac{N}{N-2}$.
Then there exist positive constants $\delta_0$ and $C$ such that 
if the pair $(u_0,u_1)\in H^1_0(\Omega)\times L^2(\Omega)$ satisfies 
\eqref{ass:u_0u_1} for some $\delta\in (0,\delta_0]$, 
then the problem \eqref{P} possesses 
a unique global-in-time (weak) solution $u$ satisfying \eqref{est:main}. 
\end{corollary}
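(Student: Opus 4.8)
The plan is to deduce this statement directly from Theorem \ref{thm:main} by checking that the hypotheses stated here are a special case of those required by that theorem. First I would verify the damping condition: since $V_0\geq N$, we certainly have $V_0>N-2$, so the constraint on $V_0$ imposed in Theorem \ref{thm:main} holds.

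The only genuine computation is the reduction of the lower bound on $p$. When $V_0\geq N$ we have $\min\{N,V_0\}=N$, so
\begin{align*}
1+\frac{4}{N-2+\min\{N,V_0\}}
=1+\frac{4}{2(N-1)}
=1+\frac{2}{N-1}
=\frac{N+1}{N-1}.
\end{align*}
Hence the admissible range $1+\frac{4}{N-2+\min\{N,V_0\}}<p<\frac{N}{N-2}$ appearing in Theorem \ref{thm:main} coincides exactly with the range $\frac{N+1}{N-1}<p<\frac{N}{N-2}$ assumed in the corollary.

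With both hypotheses of Theorem \ref{thm:main} verified, I would simply invoke that theorem to obtain the threshold $\delta_0$, the unique global-in-time weak solution $u$ under the smallness condition \eqref{ass:u_0u_1}, and the weighted energy bound \eqref{est:main}; the constant $C$ is the one furnished by the theorem. There is no real obstacle here beyond confirming the identity above: the role of the corollary is to record the transparent form $\frac{N+1}{N-1}$ that the lower exponent takes in the regime $V_0\geq N$, where $\min\{N,V_0\}$ saturates at $N$ and the explicit $V_0$-dependence of the admissible range disappears. I would accordingly present the argument as a short verification followed by a direct appeal to Theorem \ref{thm:main}, rather than as an independent proof.
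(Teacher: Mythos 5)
Your proposal is correct and matches the paper's intent exactly: the corollary is obtained as a direct specialization of Theorem \ref{thm:main}, and the only content is the observation that $V_0\geq N$ gives $\min\{N,V_0\}=N$, whence $1+\frac{4}{N-2+N}=1+\frac{2}{N-1}=\frac{N+1}{N-1}$. Nothing further is needed.
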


The following is a part of 
non-existence result for \eqref{P} with subcritical nonlinearity $1<p\leq 1+\frac{2}{N-1}$. 
This clarifies that the critical exponent for the problem \eqref{P}
with $V_0\geq N$ is $p=1+\frac{2}{N-1}$.
\begin{proposition}\label{prop:blowup}
Assume that $N\geq 3$, $V_0>0$ and $1<p\leq 1+\frac{2}{N-1}$.
If $\Omega=\R^N\setminus \overline{B(0,1)}$, then the following assertion holds:
for every ${}^t(u_0,u_1)\in [C_0^\infty(\Omega)]^2$ satisfying 
\[
\int_{\Omega}\Big(u_0+\frac{V_0}{|x|}u_1\Big)(1-|x|^{2-N})\,dx>0,
\]
the corresponding solution $u$ of \eqref{P} blows up at finite time.
\end{proposition}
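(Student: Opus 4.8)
The plan is to run a rescaled test-function argument anchored on the positive harmonic function $\psi(x)=1-|x|^{2-N}$, which is perfectly adapted to the geometry of $\Omega=\R^N\setminus\overline{B(0,1)}$: it is harmonic and strictly positive there and vanishes on $\partial\Omega=\{|x|=1\}$. First I would fix data $(u_0,u_1)\in[C_0^\infty(\Omega)]^2$ with supports in $B(0,R_0)$, assume for contradiction that the weak solution $u$ is global, and test the equation against $\Phi=\psi\,\eta_T$, where $\eta_T(t)=\eta(t/T)^\ell$ for a fixed $\eta\in C_0^\infty([0,1))$ with $\eta\equiv1$ near $0$ and a large integer $\ell$ (used in the Hölder step). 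After justifying the integrations by parts by the usual regularization of weak solutions, two structural facts do the work. Writing $\partial_t^2u+\frac{V_0}{|x|}\partial_tu=\partial_t\!\big(\partial_tu+\frac{V_0}{|x|}u\big)$ turns the damping into a pure time derivative, and Green's identity together with $\Delta\psi=0$ and the simultaneous vanishing of $u$ and $\psi$ on $\partial\Omega$ eliminates the elliptic term with no boundary contribution; here the finite propagation of Proposition~\ref{prop:localsol} guarantees that every spatial integral is over the compact set $\overline{B(0,R_0+t)}$. The only surviving boundary-in-time term is the initial-data functional $I_0=\int_\Omega\big(u_1+\frac{V_0}{|x|}u_0\big)\psi\,dx$, which the sign hypothesis on the data renders strictly positive, and I am left with the identity
\[
\int_0^\infty\!\!\int_\Omega|u|^p\psi\,\eta_T\,dx\,dt+I_0
=\int_0^\infty\!\!\int_\Omega u\,\psi\,\partial_t^2\eta_T\,dx\,dt
-V_0\!\int_0^\infty\!\!\int_\Omega\frac{u}{|x|}\,\psi\,\partial_t\eta_T\,dx\,dt .
\]

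Next I would estimate the two terms on the right by Hölder's inequality with conjugate exponent $p'=p/(p-1)$ and the weight $\psi\,dx\,dt$, so that the nonlinear quantity $\mathcal J:=\int_0^\infty\!\int_\Omega|u|^p\psi\,\eta_T\,dx\,dt$ reappears; the large power $\ell$ guarantees that the remaining $\eta_T$-weights stay integrable. Both time factors are supported where $t\simeq T$, with $|\partial_t\eta_T|\lesssim T^{-1}$ and $|\partial_t^2\eta_T|\lesssim T^{-2}$, so the estimate is governed by the spatial weights. This is the decisive point: the $\partial_t^2$-term carries $\psi\le1$ and $\int_{B(0,R_0+t)}\psi\,dx\lesssim t^N$, whereas the damping $\partial_t$-term carries $\psi/|x|^{p'}$, and since $\int_{B(0,R)}|x|^{-p'}\,dx\simeq R^{N-p'}$ in the relevant range $p>\tfrac{N}{N-1}$ (where $p'<N$), this term effectively sees the reduced dimension $N-1$. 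Carrying out both computations, the two contributions balance exactly and give
\[
\mathcal J+I_0\le C\,\mathcal J^{1/p}\,T^{\frac{N+1}{p'}-2}.
\]
Thus the decisive exponent is $\beta p'=(N+1)-2p'$ with $\beta=\frac{N+1}{p'}-2$, and the threshold $\beta p'=0$ is exactly $p'=\frac{N+1}{2}$, i.e.\ $p=1+\frac{2}{N-1}$.

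Finally I would extract the dichotomy. Discarding $I_0\ge0$ on the left gives $\mathcal J\le C^{p'}T^{\beta p'}$, and substituting this back into $I_0\le C\mathcal J^{1/p}T^{\beta}$ yields
\[
I_0\le C'\,T^{\,\beta p'}=C'\,T^{\,(N+1)-2p'} .
\]
In the subcritical range $1<p<1+\frac{2}{N-1}$ one has $\beta p'<0$, so letting $T\to\infty$ forces $I_0\le0$, contradicting $I_0>0$ (for the still smaller range $p\le\frac{N}{N-1}$, where $p'\ge N$, the damping term is even better behaved and the power stays strictly negative, so the same conclusion follows). The genuinely delicate case is the critical exponent $p=1+\frac{2}{N-1}$, where $\beta=0$: there the bound above only yields $\mathcal J\le C^{p'}$ uniformly in $T$, whence $\int_0^\infty\!\int_\Omega|u|^p\psi\,dx\,dt<\infty$. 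I would then re-run the Hölder step keeping track of the fact that $\partial_t\eta_T$ and $\partial_t^2\eta_T$ are supported only on $t\in[T/2,T]$, which replaces $\mathcal J$ on the right by the tail $\int_{T/2}^T\!\int_\Omega|u|^p\psi\,dx\,dt$; by absolute continuity of the (now finite) integral this tail tends to $0$, so $I_0\le C(\text{tail})^{1/p}\to0$, a final contradiction.

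The main obstacle is precisely this interplay of weights. Because the damping coefficient $\frac{V_0}{|x|}$ is genuinely space-dependent it cannot be pulled out of the spatial integral, and the sharp threshold $1+\frac{2}{N-1}$ surfaces only once one recognizes that the damping term is controlled by the weighted measure $|x|^{-1}\psi\,dx$ of effective dimension $N-1$, so that it balances --- rather than is dominated by --- the $\partial_t^2$-term; closing the critical exponent then forces the refinement to the time tail above. For the sign-definite nonlinearity $f(u)=|u|^p$ the quantity $\mathcal J$ is manifestly nonnegative and the scheme closes as written; for $f(u)=|u|^{p-1}u$ I would first establish positivity of the spatial average $\int_\Omega u\,\psi\,dx$ so as to bound the nonlinearity from below, after which the same estimates apply.
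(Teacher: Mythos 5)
Your proposal is correct and follows essentially the same route as the paper: test against the Dirichlet harmonic function $\psi(x)=1-|x|^{2-N}$ times a temporal cut-off $\eta(t/T)^{\ell}$, kill the elliptic term by harmonicity, isolate the initial functional $\int_\Omega(u_1+\frac{V_0}{|x|}u_0)\psi\,dx$ (note the paper's proof also produces this combination, not the one printed in the statement), apply H\"older with weight $\psi$, and observe that the damping term contributes through $\int_{B(0,R_0+t)}|x|^{-p'}\psi\,dx$, which for $p'<N$ scales like the $\partial_t^2$-term and yields the exponent $N+1-2p'$ with threshold $p=1+\frac{2}{N-1}$; your case bookkeeping for $p\lessgtr\frac{N}{N-1}$ matches the paper's. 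The one genuine divergence is the critical case $p=1+\frac{2}{N-1}$: the paper introduces the auxiliary function $Y(T)$ containing the logarithmically averaged tail and derives the ordinary differential inequality $Y(T)^p\leq CTY'(T)$ (following \cite[Lemma 3.10]{IS_NA}), whereas you exploit the uniform-in-$T$ bound to conclude $\int_0^\infty\!\int_\Omega|u|^p\psi\,dx\,dt<\infty$ and then let the tail $\int_{T/2}^{T}\!\int_\Omega|u|^p\psi\,dx\,dt\to0$ force $I_0\leq0$. Both closings are standard and correct; the vanishing-tail argument is slightly more elementary, while the $Y(T)$ route has the advantage of giving a quantitative upper bound on the lifespan rather than mere finiteness. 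Your closing caveat about $f(u)=|u|^{p-1}u$ versus $|u|^p$ is apt but left unresolved in your sketch; the paper's proof likewise implicitly works with $|u|^p$.
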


Here we briefly describe the (very rough) idea of the proof. 
At first, we consider the estimates of weighted energy functional 
\[
\int_{\Omega}\Big(|\nabla w|^2+(\pa_tw)^2\Big)(1+t+|x|)^{m}\,dx
\]
($0<m<N$) for the following inhomogeneous problem 
\begin{align}\label{eq:inhomo}
\begin{cases}
\pa_t^2w-\Delta w+\dfrac{V_0}{|x|}\pa_tw=F &\text{in}\ \Omega\times (0,T), 
\\
w=0 &\text{on}\ \pa\Omega\times (0,T), 
\\
(w,\pa_tw)(0)=(w_0,w_1),
\end{cases}
\end{align}
where $w_0,w_1,F$ are compactly supported. If $V_0< N-1$, then 
we can prove the desired estimate (Proposition \ref{prop:3-1}) 
by using the similar idea by Ikehata--Todorova--Yordanov \cite{ITY2013}. 
To treat the case $V_0>N-1$, we introduce the decomposition of the solution of  \eqref{eq:inhomo} as the following
\[
\begin{cases}
\dfrac{\lambda}{|x|^2}\psi_1-\Delta\psi_1=w_1&\text{in}\ \Omega, 
\\
\psi_1=0&\text{on}\ \pa\Omega,
\end{cases}
\quad 
\begin{cases}
\dfrac{V_0}{|x|}\pa_tv-\Delta v=F&\text{in}\ \Omega\times (0,T), 
\\
v=0&\text{on}\ \pa\Omega,
\\
v(0)=w_0+\frac{\lambda}{V_0|x|}\psi_1
\end{cases}
\]
and 
\begin{align*}\label{eq:inhomo2}
\begin{cases}
\pa_t^2U-\Delta U+\dfrac{V_0}{|x|}\pa_tU=-\pa_tv &\text{in}\ \Omega\times (0,T), 
\\
U=0 &\text{on}\ \pa\Omega\times (0,T), 
\\
(U,\pa_tU)(0)=(-\psi_1,-\frac{\lambda}{V_0|x|}\psi_1).
\end{cases}
\end{align*}
Then we can find the relation $w=v+\pa_tU$. The idea of such a decomposition 
is based on Sobajima \cite{Soba_AE} and Ikehata--Sobajima \cite{IkeSob_SLP}
(motivated by so-called modified Morawetz method in Ikehata--Matsuyama \cite{IM2002}). 
The weighted estimates for $v$ (and $\pa_tv$) are valid via 
a weighted energy estimate due to 
Sobajima--Wakasugi \cite{SoWa_CCM}. Then combining these estimates 
and energy estimates for $0<V_0<N-1$, we can deduce the desired energy estimates for the case $V_0>N-1$. 
To justify the above procedure, 
we need to use the restriction on the bounded region $D=\{x\in \Omega\;;\;|x|<R_0+T\}$ for $t\in (0,T)$ via the finite propagation property.

To apply the above estimates to the semilinear problem \eqref{P}, we use 
the Caffarelli--Kohn--Nirenberg inequality of the form 
\[
C\int_{\Omega}|u|^{2p}|x|^{\mu'}\,dx
\leq 
C\int_{\Omega}|\nabla u|^{2}|x|^{\mu}\,dx.
\]
A priori estimate for the weighted energy 
with the blowup alternative provides the global existence. 
The proof of small data blowup \ \eqref{prop:blowup} 
is an application to the test function method 
with the positive harmonic function satisfying Dirichlet boundary condition, 
which is used in Ikeda--Sobajima \cite{IS_NA}.  
To justify the above procedure, 
we introduce the problem in bounded domain 
in view of finite propagation property.

The paper is organized as follows. 
In Section \ref{sec:prelim}, 
we collect some functional inequalities 
(weighted Hardy inequalities and Caffarelli--Kohn--Nirenberg inequalities)
and 
a family of special solutions to the parabolic equation $\frac{V_0}{|x|}\pa_t\Phi=\Delta \Phi$. 
Section \ref{sec:inhomo} is devoted to 
the proof of weighted energy estimates 
for the inhomogeneous problem \eqref{eq:inhomo}. 
In Section \ref{sec:GE}, we prove existence of global--in--time solutions 
to \eqref{P} via a priori estimate with the blowup alternative.
The small data blowup (non-existence) of solutions to \eqref{P}
is discussed in Section \ref{sec:SDBU}.

\section{Preliminaries in bounded domains}\label{sec:prelim}

In this section, we collect some important lemmas 
to analyse the problem \eqref{P}. 
\subsection{Functional inequalities}

Here we give some functional inequalities on
bounded domain $D\subset \R^N$ such that 
$\pa D$ is smooth enough and $0\notin \overline{D}$ . 

The first inequality is so-called Hardy inequality.
For the proof, we refer Metafune--Sobajima--Spina \cite[Proposition 8.1]{MSS2015}. 
\begin{lemma}\label{lem:hardy}
If $N-2+\beta>0$, then 
for every $w\in H^1_0(D)$, 
\[
\left(\frac{N-2+\beta}{2}\right)^2\int_{D}w^2|x|^{\beta-2}\,dx
\leq \int_{D}|\nabla w|^2|x|^{\beta}\,dx.
\]
\end{lemma}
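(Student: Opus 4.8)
The plan is to establish the inequality by the completion-of-squares (vector field) method, so that the entire argument reduces to a single integration by parts followed by an optimization over one scalar parameter. First I would reduce to $w\in C_0^\infty(D)$ by density: since $0\notin\overline{D}$ and $D$ is bounded, the weights $|x|^{\beta}$ and $|x|^{\beta-2}$ are bounded above and below by positive constants on $\overline{D}$, so both sides are continuous with respect to the $H^1_0(D)$ norm, and the general case follows once the inequality is proved for smooth compactly supported functions.

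For $w\in C_0^\infty(D)$ and a parameter $\lambda\in\R$ to be chosen, I would begin from the trivial lower bound
\[
0\leq \int_{D}\Big|\nabla w+\lambda\frac{x}{|x|^2}w\Big|^2|x|^{\beta}\,dx
=\int_{D}|\nabla w|^2|x|^{\beta}\,dx
+2\lambda\int_{D}\frac{(x\cdot\nabla w)\,w}{|x|^2}|x|^{\beta}\,dx
+\lambda^2\int_{D}w^2|x|^{\beta-2}\,dx.
\]
The middle term is the crucial one. Using $(x\cdot\nabla w)\,w=\tfrac12\,x\cdot\nabla(w^2)$ and integrating by parts (the boundary term vanishes because $w\in C_0^\infty(D)$), it becomes
\[
\lambda\int_{D}|x|^{\beta-2}\,x\cdot\nabla(w^2)\,dx
=-\lambda\int_{D}w^2\,\nabla\cdot\big(|x|^{\beta-2}x\big)\,dx
=-\lambda(N+\beta-2)\int_{D}w^2|x|^{\beta-2}\,dx,
\]
where I have used the identity $\nabla\cdot(|x|^{\beta-2}x)=(N+\beta-2)|x|^{\beta-2}$, which is valid on $D$ precisely because $0\notin\overline{D}$.

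Collecting the three contributions yields
\[
\big[\lambda(N+\beta-2)-\lambda^2\big]\int_{D}w^2|x|^{\beta-2}\,dx
\leq\int_{D}|\nabla w|^2|x|^{\beta}\,dx
\]
for every $\lambda\in\R$. The coefficient $\lambda(N+\beta-2)-\lambda^2$ is maximized at $\lambda=\frac{N+\beta-2}{2}$, where it equals $\big(\frac{N-2+\beta}{2}\big)^2$; the hypothesis $N-2+\beta>0$ guarantees that this optimal $\lambda$ is positive, so the sharp constant is attained with the correct sign. Substituting this choice produces exactly the claimed inequality. The only delicate point is the justification of the integration by parts, but this is immediate for $w\in C_0^\infty(D)$ and transfers to $H^1_0(D)$ through the density argument above, so I expect no genuine obstacle in carrying the proof through.
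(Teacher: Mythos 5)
Your proof is correct. The paper itself does not prove Lemma \ref{lem:hardy}; it only cites Metafune--Sobajima--Spina \cite[Proposition 8.1]{MSS2015}, so your completion-of-squares argument supplies a self-contained proof where the paper relies on a reference. Your computation is the standard vector-field proof: the expansion of $\int_D|\nabla w+\lambda\frac{x}{|x|^2}w|^2|x|^{\beta}\,dx$, the identity $\nabla\cdot(|x|^{\beta-2}x)=(N+\beta-2)|x|^{\beta-2}$ (legitimate here because $0\notin\overline{D}$), and the optimization $\lambda=\frac{N-2+\beta}{2}$ are all right, and the density reduction to $C_0^\infty(D)$ is unproblematic since the weights are bounded above and below on the bounded set $\overline{D}$ away from the origin. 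It is in fact the same algebra the author uses elsewhere (e.g.\ in the proofs of Lemmas \ref{lem:energy2} and \ref{lem:ell}, where $-\int_D w\Delta w|x|^{\mu}\,dx$ is rewritten as a complete square plus $\frac{(N-2)^2-\mu^2}{4}\int_D w^2|x|^{\mu-2}\,dx$), so your route is entirely in the spirit of the paper. One small remark: on this particular class of domains your argument actually yields the inequality for every $\beta\in\R$, since the maximum of $\lambda(N-2+\beta)-\lambda^2$ equals $\bigl(\frac{N-2+\beta}{2}\bigr)^2$ regardless of the sign of $N-2+\beta$; the hypothesis $N-2+\beta>0$ is not what makes the optimal $\lambda$ admissible here, but is the natural condition under which the constant is nontrivial and under which the inequality survives on domains touching the origin, which is the setting of the cited reference.
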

The second is the Gagliardo--Nirenberg inequality (see e.g., Brezis \cite[Theorem 9.9]{Brezisbook}).
\begin{lemma}\label{lem:GN}
If $N\geq 3$, then there exists a positive constant 
$C_{GN}$ depending only on $N$ such that 
for every $H^1_0(D)$, 
\[
\left(
\int_{D}|w|^{\frac{2N}{N-2}}
\,dx\right)^{\frac{N-2}{2N}}
\leq C_{GN}
\left(
\int_{D}|\nabla w|^{2}
\,dx\right)^{\frac{1}{2}}
\]
\end{lemma}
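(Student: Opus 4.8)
The statement is the critical Sobolev embedding $H^1_0(D)\hookrightarrow L^{2N/(N-2)}(D)$, and the essential feature to preserve is that $C_{GN}$ depends only on $N$ and \emph{not} on $D$; this is precisely what makes the inequality usable later, when $D=\{x\in\Omega:|x|<R_0+T\}$ is allowed to grow with $T$. The plan is first to remove $D$ from the picture entirely. Since $w\in H^1_0(D)$, its extension by zero $\tilde w$ belongs to $H^1(\R^N)$ with $\|\nabla\tilde w\|_{L^2(\R^N)}=\|\nabla w\|_{L^2(D)}$ and $\|\tilde w\|_{L^{2N/(N-2)}(\R^N)}=\|w\|_{L^{2N/(N-2)}(D)}$. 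Hence it suffices to prove the inequality on $\R^N$ with a constant depending only on $N$, and by density it is enough to treat $w\in C_0^\infty(\R^N)$.

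The core is the Gagliardo--Nirenberg $L^1$-gradient estimate. For $w\in C_0^\infty(\R^N)$ and each coordinate direction $i$, the fundamental theorem of calculus gives $|w(x)|\le \int_{\R}|\partial_i w|\,dx_i$. Multiplying these $N$ bounds and integrating successively in $x_1,\dots,x_N$, applying the generalized Hölder inequality at each stage, yields
\[
\int_{\R^N}|w|^{\frac{N}{N-1}}\,dx\le \prod_{i=1}^N\Big(\int_{\R^N}|\partial_i w|\,dx\Big)^{\frac{1}{N-1}},
\]
and by the arithmetic--geometric mean inequality the right-hand side is controlled by $\|\nabla w\|_{L^1(\R^N)}^{N/(N-1)}$.

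Next I would bootstrap to the critical exponent by applying this $L^1$ inequality to $|w|^{\gamma}$ with $\gamma=\frac{2(N-1)}{N-2}$. Using $\nabla|w|^{\gamma}=\gamma|w|^{\gamma-1}\nabla w$ almost everywhere and Hölder's inequality with exponents $2,2$, one obtains
\[
\Big(\int_{\R^N}|w|^{\frac{\gamma N}{N-1}}\,dx\Big)^{\frac{N-1}{N}}\le C\gamma\Big(\int_{\R^N}|w|^{2(\gamma-1)}\,dx\Big)^{\frac12}\|\nabla w\|_{L^2}.
\]
The value of $\gamma$ is chosen so that $\frac{\gamma N}{N-1}=2(\gamma-1)=\frac{2N}{N-2}$, so both integrals carry the same exponent $\frac{2N}{N-2}$. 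Dividing through and simplifying the remaining power $\frac{N-1}{N}-\frac12=\frac{N-2}{2N}$ produces the claimed inequality with a constant $C_{GN}=C_{GN}(N)$.

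The main obstacle is the iterated integration in the second step: the careful successive application of the generalized Hölder inequality (the Loomis--Whitney argument) needed to pass from the $N$ one-dimensional pointwise bounds to the product estimate. Once that is established, the bootstrap is merely a matter of solving for the correct exponent $\gamma$, and the reduction to $\R^N$ is standard; the only subsidiary point requiring a word of justification is the almost-everywhere chain rule $\nabla|w|^{\gamma}=\gamma|w|^{\gamma-1}\nabla w$, which is valid for $\gamma>1$ and $w\in H^1$.
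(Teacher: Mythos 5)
Your proof is correct. The paper does not prove this lemma at all --- it simply cites Brezis, Theorem 9.9 --- and your argument (reduction to $C_0^\infty(\R^N)$ by zero extension, the iterated H\"older/Loomis--Whitney estimate giving $\|w\|_{L^{N/(N-1)}}\le\|\nabla w\|_{L^1}$, and the bootstrap applied to $|w|^{\gamma}$ with $\gamma=\tfrac{2(N-1)}{N-2}$) is precisely the standard proof from that reference, correctly capturing the one point that matters here, namely that $C_{GN}$ is independent of $D$.
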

The third is a part of 
the Caffarelli--Kohn--Nirenberg inequality 
(see Caffarelli--Kohn--Nirenberg \cite{CKN1984}), 
which is crucial to treat the nonlinear effect from $|u|^{p-1}u$ in \eqref{P}. 
For the reader's convenience, we will give a proof
of that via the Hardy and Gagliardo--Nirenberg inequalities. 
\begin{lemma}\label{lem:embed-1}
If $\mu>2-N$ and $q\in (2,\frac{2N}{N-2})$, then 
there exists a positive constant $C_{GN,\mu}$ 
(depending only on $N,\mu$) such that 
for every $w\in H^1_0(D)$, 
\begin{align}\label{eq:GN-mu}
\int_{D}|u|^{q}|x|^{\mu'}\,dx
\leq 
C_{GN,\mu}
\left(
\int_{D}|\nabla u|^2|x|^{\mu}\,dx
\right)^\frac{q}{2}, 
\end{align}
where 
$\mu'=\mu-2+\frac{N-2+\mu}{2}(q-2)$.
\end{lemma}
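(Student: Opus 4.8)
The plan is to prove the weighted Caffarelli--Kohn--Nirenberg inequality \eqref{eq:GN-mu} by combining the two inequalities already established, Lemma \ref{lem:hardy} and Lemma \ref{lem:GN}, applied not directly to $u$ but to a suitably weighted power of $u$. The key idea is to choose an exponent $\theta>1$ and set $v=|u|^{\theta-1}u$ (equivalently $v=|u|^\theta\,\mathrm{sgn}(u)$), so that the left-hand side $\int_D|u|^q|x|^{\mu'}\,dx$ becomes comparable to an $L^{2N/(N-2)}$-type norm of $v$, which is then controlled by $\|\nabla v\|_{L^2}$ via Gagliardo--Nirenberg; the gradient of $v$ produces a factor $|u|^{\theta-1}\nabla u$, and the remaining powers of $|u|$ and $|x|$ are absorbed using the Hardy inequality with the correct weight $\beta$.

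Concretely, first I would fix the bookkeeping of exponents. Writing $v=|u|^{\theta-1}u$, one has $|\nabla v|=\theta|u|^{\theta-1}|\nabla u|$ a.e., and $v\in H^1_0(D)$ whenever $u\in H^1_0(D)$ (using $q<\tfrac{2N}{N-2}$ and boundedness of $D$ to guarantee integrability). The strategy is:
\begin{enumerate}[(i)]
\item apply Lemma \ref{lem:GN} to $v$ to get $\big(\int_D|u|^{\theta\frac{2N}{N-2}}\,dx\big)^{\frac{N-2}{N}}\le C_{GN}^2\theta^2\int_D|u|^{2(\theta-1)}|\nabla u|^2\,dx$;
\item estimate the right-hand integral by inserting the weight $|x|^\mu$: split $|u|^{2(\theta-1)}|\nabla u|^2=\big(|u|^{2(\theta-1)}|x|^{-\mu}\big)\big(|\nabla u|^2|x|^\mu\big)$ is not directly useful, so instead I would apply Hölder in the exponents together with Lemma \ref{lem:hardy} to trade a factor of $|u|^2|x|^{\beta-2}$ against $|\nabla u|^2|x|^\beta$;
\item choose $\theta$ and the Hardy weight $\beta$ so that the resulting product of powers of $|u|$, of $|\nabla u|^2|x|^\mu$, and of the Lebesgue/weight exponents all match the target identity $\mu'=\mu-2+\tfrac{N-2+\mu}{2}(q-2)$.
\end{enumerate}
The constraint $\mu>2-N$ is exactly what is needed so that the relevant Hardy constant $\big(\tfrac{N-2+\beta}{2}\big)^2$ is positive for the weight $\beta$ that arises, and the constraint $q\in(2,\tfrac{2N}{N-2})$ is what makes the admissible $\theta$ lie in the range where Gagliardo--Nirenberg applies and the interpolation exponents are valid.

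The main obstacle I expect is the exponent arithmetic in step (iii): one must verify that there is a single consistent choice of $\theta$ (and of the interpolation/Hölder split) for which the total power of $|u|$ on the left reproduces $q$, the total weight power reproduces $\mu'$, and the power of the energy $\int_D|\nabla u|^2|x|^\mu\,dx$ on the right reproduces $q/2$. I anticipate that $\theta$ will be determined linearly by $q$ (roughly $\theta=\tfrac{q}{2}$ so that $|u|^{2\theta}=|u|^q$ after the correct interpolation), and that matching $\mu'$ forces the Hardy weight to be $\beta=\mu$, giving precisely the stated formula for $\mu'$ through the interpolation coefficient $\tfrac{N-2+\mu}{2}$. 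Once the exponents are pinned down, the inequality follows by concatenating (i), (ii), (iii) and collecting constants into $C_{GN,\mu}$; the only care required is to keep every appeal to Hölder's inequality and to Lemmas \ref{lem:hardy} and \ref{lem:GN} within their stated hypotheses, which the assumptions $\mu>2-N$ and $2<q<\tfrac{2N}{N-2}$ are designed to ensure.
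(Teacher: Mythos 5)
Your overall instinct---interpolate between a weighted $L^2$ quantity controlled by Hardy and an $L^{\frac{2N}{N-2}}$ quantity controlled by Gagliardo--Nirenberg---is the right one, but the concrete device you build the proof around, the power substitution $v=|u|^{\theta-1}u$ with $\theta>1$, creates a term you cannot close. After step (i) the right-hand side is $\int_D|u|^{2(\theta-1)}|\nabla u|^2\,dx$, and with only $\int_D|\nabla u|^2|x|^{\mu}\,dx$ at your disposal this cannot be estimated: any H\"older factorization $\int fg\le\|f\|_{r'}\|g\|_{r}$ that keeps the factor containing $\nabla u$ in the form $\big(\int|\nabla u|^{2r}|x|^{\mu r}\cdots\big)^{1/r}$ forces $r=1$ (since you have no $L^{2r}$ bound on $\nabla u$ for $r>1$), and hence $r'=\infty$, i.e.\ you would need $\||u|^{2(\theta-1)}\cdot(\text{weight})\|_{L^\infty}$, which is not available. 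Hardy only goes the other way (it converts $\int v^2|x|^{\beta-2}$ \emph{into} $\int|\nabla v|^2|x|^{\beta}$, producing more of the problematic mixed term, not less). Taking $\theta<1$ instead puts a negative power of $|u|$ on $\nabla u$ and is no better. So step (ii) genuinely fails, and no choice of exponents in step (iii) can repair it.

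The fix is to never raise $u$ to a power against $\nabla u$: apply H\"older directly to the \emph{left-hand side},
\begin{align*}
\int_{D}|u|^{q}|x|^{\mu'}\,dx
\leq
\left(\int_D |u|^{2}|x|^{\mu-2}\,dx\right)^{1-\frac{N-2}{4}(q-2)}
\left(\int_D \big||x|^{\frac{\mu}{2}}u\big|^{\frac{2N}{N-2}}\,dx\right)^{\frac{N-2}{4}(q-2)},
\end{align*}
which is exactly where the formula $\mu'=\mu-2+\frac{N-2+\mu}{2}(q-2)$ comes from (the condition $q<\frac{2N}{N-2}$ makes the first H\"older exponent nonnegative). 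The first factor is handled by Lemma \ref{lem:hardy} with $\beta=\mu$ (this is where $\mu>2-N$ enters), and the second by Lemma \ref{lem:GN} applied to the \emph{weighted} function $|x|^{\mu/2}u$, whose gradient is $|x|^{\mu/2}\nabla u+\frac{\mu}{2}|x|^{\mu/2-2}xu$; the commutator term is again absorbed by Hardy. This is the paper's proof. Your guess that ``$\beta=\mu$'' and that the coefficient $\frac{N-2+\mu}{2}$ arises from the interpolation is correct, but the object you must feed into Gagliardo--Nirenberg is a weighted copy of $u$, not a power of $u$.
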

\begin{proof}
Let $\gamma\in \R$ be given later. 
We see from H\"older inequality that 
\[
\int_{\Omega}|u|^{q}|x|^{\gamma}\,dx
\leq 
\left(
\int_D |u|^{2}|x|^{\gamma'-2}\,dx
\right)^{1-\frac{N-2}{4}(q-2)}
\left(
\int_D \big||x|^{\frac{\gamma'}{2}}u\big|^{\frac{2N}{N-2}}\,dx
\right)^{\frac{N-2}{4}(q-2)},
\]
where $\gamma'=\gamma+2-\frac{N-2+\gamma}{2}(q-2)$.
Now we choose $\gamma'=\mu$, that is, $\gamma=\mu'$. Then 
using the Hardy inequality (Lemma \ref{lem:hardy})
and the Gagliardo--Nirenberg inequality (Lemma \ref{lem:GN}), 
we obtain \eqref{eq:GN-mu}. The proof is complete. 
\end{proof}

\subsection{Special solutions to the corresponding parabolic equation}

Next we introduce
a family of special solutions of homogeneous parabolic equation 
\[
\frac{V_0}{|x|}\pa_t\Phi-\Delta\Phi=0
\quad \text{in}\ \R^N\times [0,\infty), 
\]
which is used in Sobajima--Wakasugi \cite{SoWa_CCM}.  

\begin{definition}\label{Phi}
Define a family of functions $\{\Phi_\beta\}_{\beta\in\R}$ as follows:
\[
\Phi_{\beta}(x,t)=(1+t)^{-\beta}\varphi_\beta\left(\frac{|x|}{V_0(1+t)}\right), 
\quad 
\varphi_\beta(z)=e^{-z}M\left(N-1-\beta,N-1;z\right),
\]
where $M(a,c;z)$ denotes the Kummer confluent hypergeometric function
\[
M(a,c;z)=\sum_{n=0}^\infty\frac{(a)_n}{(c)_n}\frac{z^n}{n!}
\quad 
\text{for}\ a\in \R,\ -c\notin\N\cup\{0\}
\]
and $(d)_n$ is the Pochhammer symbol
given by $(d)_0=1$ and $(d)_n=\prod_{k=1}^n(d+k-1)$ for $n\in\N$
(for the detail, see e.g., Beals--Wong \cite{BWbook}). 
\end{definition}
The properties of the functions $\Phi_\beta$ is listed as follows. 
\begin{lemma}[{\cite[Lemma 2.4]{SoWa_CCM}}]\label{lem:Phi}
The following assertions hold:
\begin{itemize}
\item[\bf (i)] For every $\beta\in \R$, 
\[
\frac{V_0}{|x|}\pa_t\Phi_\beta-\Delta\Phi_\beta=0, \quad x\in \R^N, \ t\geq0.
\]
\item[\bf (ii)] For every $\beta\in \R$, 
\[
\pa_t\Phi_\beta=-\beta \Phi_{\beta+1}, \quad x\in \R^N, \ t\geq0.
\]
\item[\bf (iii)] For every $\beta\in \R$, 
\[
|\Phi_\beta|\leq C_{\Phi_\beta} \Big(1+t+V_0|x|\Big)^{-\beta}, \quad x\in \R^N, \ t\geq0.
\]
\item[\bf (iv)] For every $\beta<N-1$, 
\[
\Phi_\beta\geq c_{\Phi_\beta} \Big(1+t+V_0|x|\Big)^{-\beta}, \quad x\in \R^N, \ t\geq0.
\]
\end{itemize}
\end{lemma}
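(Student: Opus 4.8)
The plan is to push every assertion back to the confluent hypergeometric equation solved by $M(N-1-\beta,N-1;z)$ and then read off the four conclusions; throughout I write $z$ for the self-similar variable and set $a=N-1-\beta$, $c=N-1$, so that $c-a=\beta$. For (i) I would insert the ansatz $\Phi_\beta=(1+t)^{-\beta}\varphi_\beta(z)$ into the operator $\frac{V_0}{|x|}\partial_t-\Delta$, writing $\Delta$ in radial form $\partial_r^2+\frac{N-1}{r}\partial_r$. Using $\partial_t z=-z/(1+t)$ together with the chain rule, both terms acquire the common factor $(1+t)^{-\beta-2}$, and after dividing it out the equation collapses to the single ordinary differential equation $z\varphi_\beta''+(z+N-1)\varphi_\beta'+\beta\varphi_\beta=0$. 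It then remains to verify that $\varphi_\beta=e^{-z}M(a,c;z)$ solves this ODE: substituting $M=e^{z}\varphi_\beta$ into Kummer's equation $zM''+(c-z)M'-aM=0$ and simplifying (here one uses $c-a=\beta$) reproduces exactly the displayed equation, which gives (i).

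For (ii) the same chain-rule computation already yields $\partial_t\Phi_\beta=-(1+t)^{-\beta-1}\big(\beta\varphi_\beta(z)+z\varphi_\beta'(z)\big)$, so since $\Phi_{\beta+1}=(1+t)^{-\beta-1}\varphi_{\beta+1}(z)$ the claim reduces to the pointwise identity $\beta\varphi_\beta+z\varphi_\beta'=\beta\varphi_{\beta+1}$. Writing $\varphi_\beta'=e^{-z}(M'-M)$ and noting $\varphi_{\beta+1}=e^{-z}M(a-1,c;z)$, this is equivalent to the contiguous relation $zM'(a,c;z)=(c-a)M(a-1,c;z)+(a-c+z)M(a,c;z)$, which I would confirm by a short comparison of the power-series coefficients of both sides. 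This yields (ii).

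For (iii) and (iv) I would analyse $\varphi_\beta$ at the two ends of $[0,\infty)$. At the origin $\varphi_\beta(0)=M(a,c;0)=1$, while as $z\to\infty$ the classical asymptotics $M(a,c;z)\sim\frac{\Gamma(c)}{\Gamma(a)}e^{z}z^{a-c}$ give $\varphi_\beta(z)\sim\frac{\Gamma(N-1)}{\Gamma(N-1-\beta)}z^{-\beta}$ (with $\varphi_\beta$ decaying exponentially in the exceptional case $N-1-\beta\in-\N\cup\{0\}$, where $M$ is a polynomial). Together with continuity on $[0,\infty)$ this gives $|\varphi_\beta(z)|\le C_\beta(1+z)^{-\beta}$ for every $\beta$, and after substituting the self-similar variable and using $(1+t)^{-\beta}(1+z)^{-\beta}=(1+t+V_0|x|)^{-\beta}$ this is precisely (iii). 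For (iv), when $\beta<N-1$ we have $a=N-1-\beta>0$ and $c>0$, so every coefficient of the series $M(a,c;z)=\sum_n\frac{(a)_n}{(c)_n\,n!}z^n$ is positive; hence $\varphi_\beta>0$ on $[0,\infty)$ and the leading constant $\Gamma(N-1)/\Gamma(N-1-\beta)$ is positive, which upgrades the upper bound to the matching lower bound $\varphi_\beta(z)\ge c_\beta(1+z)^{-\beta}$, and hence to (iv).

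I expect the main obstacle to be part (i): carrying out the chain-rule reduction of the singular parabolic operator cleanly, and in particular tracking the $V_0$-dependence hidden in the self-similar variable so that the reduced equation is genuinely Kummer's equation rather than a rescaled variant of it. The large-$z$ asymptotics invoked in (iii) and (iv) are classical but must be applied with some care around the exceptional values $N-1-\beta\in-\N\cup\{0\}$; part (ii) and the positivity argument for (iv) are then routine.
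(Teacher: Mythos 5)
Your argument is correct and is essentially the standard verification; note, however, that the paper itself gives no proof of this lemma --- it is quoted verbatim from \cite[Lemma 2.4]{SoWa_CCM}, so what you have written is in effect a reconstruction of the cited reference's proof rather than an alternative to anything in this paper. The contiguous relation $zM'(a,c;z)=(c-a)M(a-1,c;z)+(a-c+z)M(a,c;z)$ that you reduce (ii) to does check out against the power series, the positivity of the Taylor coefficients of $M(a,c;\cdot)$ for $a=N-1-\beta>0$, $c=N-1>0$ gives (iv), and the large-$z$ asymptotics $M(a,c;z)\sim\frac{\Gamma(c)}{\Gamma(a)}e^{z}z^{a-c}$ (with the polynomial exception you mention) give (iii). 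The one point you flag as the ``main obstacle'' deserves to be resolved explicitly, because it is real: with the self-similar variable as literally printed in Definition 2.1, namely $z=\frac{|x|}{V_0(1+t)}$, the reduction of $\frac{V_0}{|x|}\pa_t-\Delta$ does \emph{not} collapse to $z\varphi''+(z+N-1)\varphi'+\beta\varphi=0$ but to $z\varphi''+(N-1+V_0^2z)\varphi'+V_0^2\beta\varphi=0$, so (i) would fail for $V_0\neq 1$. The computation closes exactly as you describe only if the argument of $\varphi_\beta$ is $\frac{V_0|x|}{1+t}$, which is the normalization used in \cite{SoWa_CCM} and is the one consistent with the bounds $(1+t+V_0|x|)^{-\beta}$ in (iii)--(iv) via $(1+t)(1+z)=1+t+V_0|x|$; the printed $\frac{|x|}{V_0(1+t)}$ is evidently a typo, and your write-up should either say so or carry out the chain rule once to exhibit where the two factors of $V_0$ cancel.
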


Moreover, we need the following two lemmas  
which comes from integration by parts.
\begin{lemma}[{\cite{Soba_DIE}}]\label{lem:ibp}
Assume that $\Phi\in C^2(\overline{D})$ is positive and
$\delta\in (0,\frac{1}{2})$. 
Then for every $z\in H^2(D)\cap H^1_0(D)$, 
\begin{align*}
\int_D \frac{z\Delta z}{\Phi^{1-2\delta}}\,dx
\leq 
\frac{\delta}{1-\delta}
\int_D \frac{|\nabla z|^2}{\Phi^{1-2\delta}}\,dx
+
\frac{1-2\delta}{2}
\int_D \frac{z^2\Delta \Phi}{\Phi^{2-2\delta}}\,dx.
\end{align*} 
\end{lemma}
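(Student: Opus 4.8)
The plan is to reduce the inequality to two integrations by parts together with one application of Young's inequality, exploiting the positivity of $\Phi$ and the fact that $z\in H^1_0(D)$ annihilates every boundary term. Throughout I write $\sigma=1-2\delta$, so that $\sigma\in(0,1)$ and the weights $\Phi^{-\sigma},\Phi^{-\sigma-1},\Phi^{-\sigma-2}$ are bounded and of class $C^1$ up to $\pa D$; here I use that $\Phi\in C^2(\overline{D})$ is positive on the compact set $\overline{D}$, hence bounded away from $0$. All the integrals below are finite, since $z\in H^2(D)$ forces $\Delta z\in L^2(D)$ and $\nabla z\in H^1(D)$.

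First I would integrate by parts once. Since $z=0$ on $\pa D$,
\[
\int_D \frac{z\,\Delta z}{\Phi^{\sigma}}\,dx
=-\int_D \nabla z\cdot\nabla\!\Big(\frac{z}{\Phi^{\sigma}}\Big)\,dx
=-\int_D \frac{|\nabla z|^2}{\Phi^{\sigma}}\,dx+\sigma\int_D \frac{z\,\nabla\Phi\cdot\nabla z}{\Phi^{\sigma+1}}\,dx.
\]
I denote the last (cross) term by $X$ and the first integral by $A\ge0$, so that the whole problem becomes an upper estimate for $X$.

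Second, I would rewrite the cross term using $z\,\nabla z=\tfrac12\nabla(z^2)$ and integrate by parts again, once more with vanishing boundary term because $z^2=0$ on $\pa D$:
\[
X=\frac{\sigma}{2}\int_D \frac{\nabla\Phi\cdot\nabla(z^2)}{\Phi^{\sigma+1}}\,dx
=-\frac{\sigma}{2}\int_D z^2\,\mathrm{div}\!\Big(\frac{\nabla\Phi}{\Phi^{\sigma+1}}\Big)\,dx
=-\frac{\sigma}{2}\,C+\frac{\sigma(\sigma+1)}{2}\,B,
\]
where $C=\int_D z^2\Phi^{-\sigma-1}\Delta\Phi\,dx$ and $B=\int_D z^2|\nabla\Phi|^2\Phi^{-\sigma-2}\,dx\ge0$. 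This identity is exact but contains the unsigned gradient term $B$, and this is the one genuine obstacle: there is no pointwise bound relating $|\nabla\Phi|^2$ to $\Delta\Phi$, so $B$ can be neither dropped nor directly absorbed into $C$.

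The way around this is to produce a second, independent inequality for $X$ by Young's inequality and then eliminate $B$ between the two relations. For any $\eta>0$,
\[
X=\int_D \frac{\nabla z}{\Phi^{\sigma/2}}\cdot\frac{\sigma z\,\nabla\Phi}{\Phi^{\sigma/2+1}}\,dx
\le \frac{\eta}{2}\,A+\frac{\sigma^2}{2\eta}\,B.
\]
Solving the exact identity for $B$ and substituting into this bound yields, after choosing $\eta=\tfrac{2\sigma}{\sigma+1}$ (which makes the coefficient of $X$ on the right equal to $\tfrac12$), the clean estimate $X\le \tfrac{2\sigma}{\sigma+1}A+\tfrac{\sigma}{2}C$. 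Since $\tfrac{2\sigma}{\sigma+1}=\tfrac{1-2\delta}{1-\delta}$, combining with $\int_D z\,\Delta z\,\Phi^{-\sigma}\,dx=-A+X$ gives
\[
\int_D \frac{z\,\Delta z}{\Phi^{\sigma}}\,dx\le \Big(\tfrac{1-2\delta}{1-\delta}-1\Big)A+\frac{\sigma}{2}\,C=-\frac{\delta}{1-\delta}A+\frac{1-2\delta}{2}C.
\]
Because $A\ge0$, discarding the favorable term $-\tfrac{\delta}{1-\delta}A$ (equivalently, bounding it by $+\tfrac{\delta}{1-\delta}A$) produces exactly the stated inequality. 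I expect the only points needing care to be the justification of the two integrations by parts for $H^2\cap H^1_0$ functions (a standard density or Green-identity argument, using boundedness of the $\Phi$-weights on $\overline{D}$) and the bookkeeping in the elimination of $B$; the value of $\eta$ is forced precisely by the requirement that the coefficient of $C$ come out as $\tfrac{1-2\delta}{2}$.
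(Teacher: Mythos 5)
Your proof is correct, and there is nothing in the paper to compare it against: Lemma \ref{lem:ibp} is imported from \cite{Soba_DIE} without proof. Your route --- the exact identity $\int_D z\Delta z\,\Phi^{-\sigma}=-A+X$ from one integration by parts, the second exact identity $X=-\tfrac{\sigma}{2}C+\tfrac{\sigma(\sigma+1)}{2}B$ from integrating $\nabla(z^2)$ by parts, a Young bound $X\le\tfrac{\eta}{2}A+\tfrac{\sigma^2}{2\eta}B$, and then elimination of the uncontrollable term $B=\int_D z^2|\nabla\Phi|^2\Phi^{-\sigma-2}\,dx$ between the two relations --- is exactly the standard mechanism for this kind of weighted estimate, and all the steps check out: the two integrations by parts are legitimate for $z\in H^2(D)\cap H^1_0(D)$ against the $C^1(\overline{D})$ weights (since $\Phi$ is continuous and positive on the compact set $\overline{D}$, hence bounded away from zero), every integral involved is finite, so the absorption of $\tfrac12X$ is valid, and the arithmetic $\eta=\tfrac{2\sigma}{\sigma+1}$, $\tfrac{2\sigma}{\sigma+1}=\tfrac{1-2\delta}{1-\delta}$ is right. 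One remark worth making: what you actually establish before the final "discard the favorable term" step is the stronger inequality with coefficient $-\tfrac{\delta}{1-\delta}$ in front of $\int_D|\nabla z|^2\Phi^{-(1-2\delta)}\,dx$, and that stronger form is precisely what the paper invokes later --- in the proof of Lemma \ref{lem:est-v1} the term $-\tfrac{2\delta}{1-\delta}\int_D|\nabla v|^2\Phi_\beta^{-(1-2\delta)}\,dx$ appears with a negative sign and is then moved to the left-hand side to produce the space-time gradient bound. So the $+$ sign in the statement as printed is a weakened (and, for the paper's purposes, insufficient) version of what is needed; you should keep the sharp form $-\tfrac{\delta}{1-\delta}A$ rather than throwing it away in the last line.
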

\begin{lemma}[{\cite[Lemma 3.5]{SoWa_CCM}}]\label{lem:hardy2}
If $m>2-N$, then 
for every $z\in H_0^1(D)$ and $t\geq0$, 
\begin{align*}
\int_D z^2\frac{\Psi(t)^{m-1}}{|x|}\,dx
\leq 
\min\left\{\frac{N-1}{2},\frac{N-2+m}{2}\right\}^{-2}\int_D |\nabla z|^2\Psi(t)^{m}\,dx,
\end{align*} 
where $\Psi=1+t+|x|$. 
\end{lemma}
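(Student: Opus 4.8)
The plan is to prove this weighted Hardy inequality by the vector-field (integration-by-parts) method, choosing a radial field whose divergence reproduces exactly the two weights that appear in the statement. Fix $t\geq0$ and write $\Psi=1+t+|x|$, so that $\nabla\Psi=x/|x|$ on $D$; since $0\notin\overline D$ and $D$ is bounded, both $\Psi$ and $|x|$ are bounded and bounded away from $0$ on $\overline D$, hence every weight below is a smooth bounded function there. I would take the field
\[
\mathbf F=\Psi^{m-1}\frac{x}{|x|},
\]
and compute its divergence using $\nabla(\Psi^{m-1})=(m-1)\Psi^{m-2}\tfrac{x}{|x|}$ together with the identity $\operatorname{div}(x/|x|)=(N-1)/|x|$, which gives
\[
\operatorname{div}\mathbf F=(m-1)\Psi^{m-2}+(N-1)\frac{\Psi^{m-1}}{|x|}.
\]

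Next I would record the integration-by-parts identity. For $z\in H^1_0(D)$ the boundary term vanishes, so $\int_D z^2\operatorname{div}\mathbf F\,dx=-2\int_D z\,\nabla z\cdot\mathbf F\,dx$; substituting the divergence yields
\[
(m-1)\int_D z^2\Psi^{m-2}\,dx+(N-1)\int_D z^2\frac{\Psi^{m-1}}{|x|}\,dx=-2\int_D z\,\nabla z\cdot\mathbf F\,dx.
\]
Writing $A=\int_D z^2\Psi^{m-1}|x|^{-1}\,dx$ and $B=\int_D|\nabla z|^2\Psi^{m}\,dx$, the key elementary observation is $\Psi\geq|x|$, so that $\Psi^{m-2}\leq\Psi^{m-1}/|x|$. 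I would then branch on the sign of $m-1$: if $m\geq1$ the first term on the left is nonnegative and is simply dropped, leaving the coefficient $N-1$ in front of $A$; if $m<1$ then multiplying $\Psi^{m-2}\leq\Psi^{m-1}/|x|$ by the negative number $m-1$ flips the inequality, giving $(m-1)\Psi^{m-2}\geq(m-1)\Psi^{m-1}/|x|$, and collecting the two terms leaves the coefficient $(m-1)+(N-1)=N-2+m$ in front of $A$. In either case the left-hand side is bounded below by $\min\{N-1,\,N-2+m\}\,A$, and the hypothesis $m>2-N$ guarantees $N-2+m>0$, so this coefficient is positive.

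It then remains to bound the right-hand side by $\sqrt A\,\sqrt B$. Since $|\mathbf F|=\Psi^{m-1}$ and, again using $\Psi\geq|x|$, one has $\Psi^{m-1}\leq\Psi^{m-1/2}|x|^{-1/2}$, I would estimate
\[
-2\int_D z\,\nabla z\cdot\mathbf F\,dx
\leq 2\int_D\Big(|z|\,\Psi^{(m-1)/2}|x|^{-1/2}\Big)\Big(|\nabla z|\,\Psi^{m/2}\Big)\,dx
\leq 2\sqrt A\,\sqrt B
\]
by Cauchy--Schwarz. Combining with the lower bound gives $\min\{N-1,N-2+m\}\,A\leq 2\sqrt A\,\sqrt B$, hence $\sqrt A\leq 2\min\{N-1,N-2+m\}^{-1}\sqrt B$, which is exactly the claimed inequality after squaring. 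The only point requiring care is the validity of the integration-by-parts identity for a general $z\in H^1_0(D)$ rather than a smooth one; this I would dispatch by a routine density argument, approximating $z$ in $H^1_0(D)$ by $C_c^\infty(D)$ functions and passing to the limit, which is legitimate precisely because all weights are bounded on $\overline D$. Beyond this, I expect the only genuine subtlety to be the sign bookkeeping of $m-1$ that produces the unified $\min$-constant; the remainder is a direct computation.
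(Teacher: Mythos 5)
Your proof is correct: the divergence computation, the sign bookkeeping on $m-1$ via $\Psi\geq|x|$, and the Cauchy--Schwarz step all check out, and the hypothesis $m>2-N$ is used exactly where needed to make the constant positive. The paper itself gives no proof of this lemma (it is quoted from \cite[Lemma 3.5]{SoWa_CCM}), and your vector-field argument is the standard one underlying that reference, so there is nothing to flag beyond noting that the trivial case $A=0$ and the finiteness of $A$ (guaranteed since $0\notin\overline D$ and $D$ is bounded) should be mentioned before dividing by $\sqrt A$.
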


\section{The inhomogeneous problems in bounded domains}\label{sec:inhomo}
In this section, 
we consider the following inhomogeneous 
problem in the bounded domain $D$ 
($\pa D$ is smooth enough and $0\notin \overline{D}$):
\begin{align}\label{AP}
\begin{cases}
\pa_t^2w(x,t)-\Delta w(x,t) +\dfrac{V_0}{|x|}\pa_tw(x,t)
=F(x,t)
\quad \text{in}\ D\times(0,T),
\\
w(x,t)=0\quad \text{on}\ \pa D\times(0,T), 
\\
(w,\pa_tw)(0)=(w_0,w_1),
\end{cases}
\end{align}
where $(w_0,w_1)\in H^1_0(D)\times L^2(D)$ 
and  $F\in C([0,T);L^2(D))$. 
If $F\equiv 0$ and $D=\R^N$, then 
the energy estimate of $w$ with 
compactly supported initial data is proved
in Ikehata--Todorova--Yordanov \cite{ITY2013}.
In contrast, we will show the estimate of
the functional with space-time dependent weight
\[
E_{m}^{\Psi}(w;t)
=
\int_D\Big(|\nabla w(x,t)|^2+\big(\pa_tw(x,t)\big)\Big)\Psi(x,t)^{m}\,dx, 
\quad \Psi(x,t)=1+t+|x|.
\]
To apply such estimates in the bounded domain $D$ 
to the case of exterior domain, 
it is crucial to derive them 
having constants which are independent of the shape of $D$. 

\begin{proposition}\label{prop:3-1}
Assume that 
$(w_0,w_1)\in H^1_0(D)\times L^2(D)$ 
and  $F\in C([0,T];L^2(D))$. 
If $0<m<\min\{N-1,V_0-1\}$, then 
there exist positive constants 
$\delta_{m}$,  $K_{m}$ (depending only on $N$ and $m$)
such that 
\begin{align*}
&E_{m+1}^{\Psi}(w;t)
+
\delta_m
\int_0^tE_{m}^{\Psi}(w;s)
\,ds
\\
&\leq 
K_m
\left(
E_{m+1}^{\Psi}(w;0)
+
\int_0^t\int_D F(x,s)^2\Psi(x,s)^{m+1}|x|\,dx\,ds
\right).
\end{align*}
\end{proposition}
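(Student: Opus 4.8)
The plan is to prove this weighted energy estimate via a multiplier argument, testing the equation against a cleverly weighted combination of $w$ and $\partial_t w$. The key is that the weight $\Psi = 1+t+|x|$ satisfies the degenerate eikonal-type relation $\partial_t\Psi = 1$ and $|\nabla\Psi| = 1$, so that $\partial_t\Psi \geq |\nabla\Psi|$ with equality; this is exactly the structure needed to extract positive dissipation from the damping term when $|x|$ is comparable to $\Psi$.

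Let me sketch the main computation. First I would multiply the equation in \eqref{AP} by $\Psi^{m+1}\partial_t w$ and integrate over $D$. The wave part $(\partial_t^2 w - \Delta w)\partial_t w$ produces, after integration by parts in space (using the Dirichlet condition on $\partial D$ to kill the boundary term), the time derivative $\tfrac{1}{2}\partial_t\bigl(|\nabla w|^2 + (\partial_t w)^2\bigr)$ against $\Psi^{m+1}$, together with a cross term $-\nabla\Psi^{m+1}\cdot\nabla w\,\partial_t w$. Moving the $\Psi^{m+1}$ inside the time derivative generates the good energy $\tfrac{d}{dt}E_{m+1}^\Psi$ plus a correction $-(m+1)\Psi^m\bigl(|\nabla w|^2+(\partial_t w)^2\bigr)$. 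The damping term contributes $\tfrac{V_0}{|x|}\Psi^{m+1}(\partial_t w)^2$, which is the dissipative quantity I want to exploit. The forcing $F$ is absorbed by Cauchy--Schwarz against the dissipation, producing the $\int\int F^2\Psi^{m+1}|x|$ term on the right. The structural heart of the argument is to show that the sum of the bad correction terms is dominated by the good damping term: schematically one needs
\begin{align*}
(m+1)\Psi^m\bigl(|\nabla w|^2+(\partial_t w)^2\bigr) + (m+1)\Psi^m\nabla\Psi\cdot\nabla w\,\partial_t w
\;\leq\;
\frac{V_0}{|x|}\Psi^{m+1}(\partial_t w)^2 + (\text{good gradient term}).
\end{align*}
To control the gradient piece $|\nabla w|^2\Psi^m$ that is \emph{not} dissipated by the damping, I would use the weighted Hardy-type inequality of Lemma \ref{lem:hardy2}, which bounds $\int z^2\Psi^{m-1}|x|^{-1}$ by $\int|\nabla z|^2\Psi^m$, together with a second multiplier.

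The second multiplier is where the condition $m < \min\{N-1, V_0-1\}$ enters. Testing the equation against $\Psi^m\, w$ (rather than $\partial_t w$) gives control of $\int|\nabla w|^2\Psi^m$ at the cost of a term $\int\tfrac{V_0}{|x|}\Psi^m w\,\partial_t w$ and a term from $(\partial_t w)^2$; the constant $\min\{(N-1)/2,(N-2+m)/2\}$ from Lemma \ref{lem:hardy2} forces $m<N-1$, while the requirement $m<V_0-1$ ensures the damping coefficient is large enough for the combined quadratic form in $(|\nabla w|, \partial_t w, |x|^{-1/2}w)$ to be positive definite after all substitutions. I would form a linear combination $\mathcal{E}(t) = E_{m+1}^\Psi(w;t) + \eta\int_D \Psi^m w\,\partial_t w\,dx$ with a small parameter $\eta>0$, so that $\mathcal E$ is equivalent to $E_{m+1}^\Psi$ and $\tfrac{d}{dt}\mathcal E \leq -\delta_m E_m^\Psi + (\text{forcing})$; integrating in time then yields the stated inequality with constants $\delta_m, K_m$ depending only on $N$ and $m$.

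The main obstacle will be verifying that all the correction constants can be absorbed uniformly, i.e. that the quadratic form assembled from the two multipliers is positive definite with a margin $\delta_m$ depending only on $N$ and $m$ and \emph{not} on $D$ or on the support of the data. This independence is the whole point of the proposition (as emphasized in the text), so I must be careful that every integration by parts uses only the Dirichlet boundary condition and that the Hardy constants from Lemmas \ref{lem:hardy} and \ref{lem:hardy2} are intrinsic. The delicate balancing of the pointwise inequality relating $(m+1)\Psi^m$, the cross term, and $\tfrac{V_0}{|x|}\Psi^{m+1}$ — valid precisely because $\partial_t\Psi=|\nabla\Psi|=1$ and $|x|\leq\Psi$ — is the step I would scrutinize most carefully, since it is here that the condition $m<V_0-1$ must be used sharply rather than wastefully.
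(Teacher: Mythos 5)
Your overall strategy --- a $\partial_t w$ multiplier combined with a small multiple of a $w$ multiplier, with the damping supplying the dissipation --- is the right family of ideas, but carrying the space--time weight $\Psi^{m+1}$ directly inside the multipliers does not close under the stated hypothesis $m<V_0-1$, and this is exactly why the paper proceeds differently. The obstruction is the term produced by differentiating the weight in time: testing against $\Psi^{m+1}\partial_t w$ gives
\begin{align*}
\frac12\frac{d}{dt}E^{\Psi}_{m+1}(w;t)+V_0\int_D\frac{(\partial_t w)^2}{|x|}\Psi^{m+1}\,dx
\leq \frac{m+1}{2}E^{\Psi}_m(w;t)+(m+1)\left|\int_D\partial_t w\,\nabla w\cdot\frac{x}{|x|}\Psi^{m}\,dx\right|+\dots,
\end{align*}
and the $|\nabla w|^2\Psi^m$ part of the right-hand side, carrying a coefficient of at least $\frac{m+1}{2}$, is not seen by the damping at all. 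It can only be removed by taking $\eta\gtrsim m+1$ in your combination $\mathcal E=E^\Psi_{m+1}+\eta\int_D\Psi^m w\,\partial_t w\,dx$; but the $w$-multiplier identity also produces $+\eta\int_D(\partial_t w)^2\Psi^m\,dx$, which must be paid out of the damping budget $V_0-\frac{m+1}{2}-(\text{cross terms})$. Chasing the constants, the assembled quadratic form is sign-definite only when $V_0$ exceeds $m+1$ by a definite margin (roughly $V_0\gtrsim\frac32(m+1)$ at best), not under $V_0>m+1$. There is also no escape through the pointwise inequality you single out for scrutiny: the form $-\frac{m+1}{2}\Psi^m\big(|\nabla w|^2+(\partial_t w)^2\big)+(m+1)\Psi^m\partial_t w\,\nabla w\cdot\frac{x}{|x|}+\frac{V_0}{|x|}\Psi^{m+1}(\partial_t w)^2$ is strictly negative on the subspace $\partial_t w=0$, so it can never be nonnegative. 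Adding the companion term $\frac{V_0}{2}\int_D\frac{w^2}{|x|}\Psi^m\,dx$ to make the damping cross term exact does not help either: its time derivative contributes $\frac{V_0m}{2}\int_D\frac{w^2}{|x|}\Psi^{m-1}\,dx$, which by Lemma \ref{lem:hardy2} is of the same order in $\eta$ as the good gradient term and forces an \emph{upper} bound on $V_0$.

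The paper avoids all of this by using time-independent weights $|x|^{\mu+1}$ in the multipliers (Lemmas \ref{lem:energy1}--\ref{lem:energy3}): then the weight has no time derivative, the damping cross term $V_0\int_D\frac{w\,\partial_t w}{|x|}|x|^{\mu}\,dx$ integrates to an exact derivative, and the dissipation coefficient is $V_0-|\mu+1|$, positive precisely when $\mu+1<V_0$. The weight $\Psi^{m+1}$ is reconstructed only afterwards, by iterating the $|x|^{\mu}$-estimates over $\mu=m,m-1,\dots$ with increasing powers of $(1+t)$ and interpolating. Moreover the paper's proof necessarily splits into two regimes: for $N-2\le m<N-1$ the $|x|^{\mu}$-identity leaves an uncontrolled term $\int_D w^2|x|^{m-2}\,dx$ (the coefficient $\frac{(N-2)^2-\mu^2}{4}$ changes sign), and this is handled by the decomposition $w=v+\partial_t U$ through the auxiliary problems \eqref{eq:psi1}--\eqref{eq:U} and the special parabolic solutions $\Phi_\beta$, culminating in Proposition \ref{prop:est-w}. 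Your sketch contains neither the iteration that recovers the $\Psi$-weight nor any substitute for this second, harder half of the argument, so as written it does not prove the proposition in the stated range of $m$ and $V_0$.
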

By using the resolvent $J_n=(1-\frac{1}{n}\Delta)^{-1}$ 
of the operator $-\Delta$ with the domain $H^2(D)\cap H^1_0(D)$, 
we can verify that 
$(J_n w_0, J_n w_1)\in (H^2(D)\cap H^1_0(D))\times H^1_0(D)$ 
and $J_n F\in C([0,T];H^1_0(D))$ and 
\[
\begin{cases}
J_n w_0\to w_0 &\text{in}\ H^1_0(\Omega),
\\
J_n w_1\to w_1 &\text{in}\ L^2(\Omega),
\\
J_n F\to F &\text{in}\ C([0,T];L^2(\Omega)).
\end{cases}
\]
Therefore in the energy method, (by the above approximation) 
we can use the integration by parts. 

\subsection{Energy estimates for \texorpdfstring{$0<m<N-2$}{}}
We will give estimates of the weighted energy functionals
\begin{align*}
E_{\mu+1}(w;t)=
\int_{D}\Big(|\nabla w(x,t)|^2+\big(\pa_tw(x,t)\big)^2\Big)|x|^{\mu+1}\,dx.
\end{align*}
The following (auxiliary) functionals play an essential role:
\begin{align*}
\widetilde{E}_{\mu+1}(w;t)
&=
E_{\mu+1}(w;t)
-\frac{(m+1)(N-2+\mu)}{2}
\int_D w(x,t)^2|x|^{\mu-1}\,dx,
\\
E_{\mu}^*(w;t)
&=
\int_D
\Big(2w(x,t)\pa_tw(x,t)+\frac{V_0}{|x|}w(x,t)^2\Big)|x|^{\mu}\,dx
\end{align*}
and their linear combination
\[
E^{\sharp}_{\mu+1}(w;t)=\widetilde{E}_{\mu+1}(w;t)
+\frac{V_0}{2}E_{\mu}^*(w;t).
\]
The following lemma asserts that $E_{\mu+1}$ and $E^{\sharp}_{\mu+1}$ 
are equivalent under a suitable restriction on $\mu$. 
\begin{lemma}\label{lem:energy0}
If $3-N<\mu<\sqrt{(N-2)^2+1+V_0^2}$, then 
there exist positive constants $c_{\mu+1}^\sharp$ and $C_{\mu+1}^\sharp$ 
depending only on $N, \mu$ and $V_0$ such that 
\begin{align*}
c_{\mu+1}^\sharp E_{\mu+1}(w;t)
\leq 
E_{\mu+1}^{\sharp}(w;t)
\leq 
C_{\mu+1}^\sharp E_{\mu+1}(w;t).
\end{align*}
\end{lemma}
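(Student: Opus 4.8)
The plan is to reduce the equivalence to elementary estimates for four scalar integrals. Abbreviate
\[
A=\int_D|\nabla w|^2|x|^{\mu+1}\,dx,\quad
B=\int_D(\pa_tw)^2|x|^{\mu+1}\,dx,\quad
P=\int_D w^2|x|^{\mu-1}\,dx,\quad
Q=\int_D w\,\pa_tw\,|x|^{\mu}\,dx,
\]
so that $E_{\mu+1}(w;t)=A+B$. Expanding the three auxiliary functionals and using $\frac{V_0}{|x|}|x|^{\mu}=V_0|x|^{\mu-1}$ gives the compact form
\[
E^{\sharp}_{\mu+1}(w;t)=A+B+c_0P+V_0Q,
\qquad
c_0=\frac{V_0^2-(\mu+1)(N-2+\mu)}{2}.
\]
Only two inputs are needed: the Hardy inequality (Lemma \ref{lem:hardy}) with $\beta=\mu+1$, which yields $hP\le A$ with $h=\big(\tfrac{N-1+\mu}{2}\big)^2$ and is licit because $\mu>3-N$ forces $N-2+\beta=N-1+\mu>0$; and the Cauchy--Schwarz inequality $|Q|\le P^{1/2}B^{1/2}$ (the weights $|x|^{\mu-1}$ and $|x|^{\mu+1}$ average to the weight $|x|^{\mu}$ appearing in $Q$).

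The upper bound is the routine half. Since $P\le h^{-1}A$ and $|Q|\le P^{1/2}B^{1/2}\le\tfrac12(P+B)\le\tfrac12(h^{-1}A+B)$, every term of $E^{\sharp}_{\mu+1}$ is controlled by $A+B$, giving $E^{\sharp}_{\mu+1}(w;t)\le C^{\sharp}_{\mu+1}E_{\mu+1}(w;t)$ with an explicit $C^{\sharp}_{\mu+1}$ depending only on $N,\mu,V_0$; this step uses only the lower restriction $\mu>3-N$.

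The lower bound is where the real content lies, and I expect it to be the main obstacle. For $c\in(0,1)$ to be fixed, Cauchy--Schwarz used in its unfavourable sign gives
\[
E^{\sharp}_{\mu+1}(w;t)-c\,E_{\mu+1}(w;t)
\ge (1-c)A+(1-c)B+c_0P-V_0P^{1/2}B^{1/2},
\]
and replacing $(1-c)A$ by the smaller quantity $(1-c)hP$ via Hardy leaves a quadratic form in $(P^{1/2},B^{1/2})$ with matrix
\[
\begin{pmatrix}
(1-c)h+c_0 & -V_0/2\\
-V_0/2 & 1-c
\end{pmatrix}.
\]
This matrix is positive definite for all sufficiently small $c>0$ precisely when the $c=0$ determinant condition $h+c_0>V_0^2/4$ holds (which in particular forces $h+c_0>0$). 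The delicate point is that this determinant inequality simplifies exactly to the hypothesis: using the identity
\[
(N-1+\mu)^2-2(\mu+1)(N-2+\mu)=(N-2)^2+1-\mu^2,
\]
one computes $4(h+c_0)-V_0^2=(N-2)^2+1-\mu^2+V_0^2$, so that $h+c_0>V_0^2/4$ is equivalent to $\mu^2<(N-2)^2+1+V_0^2$, i.e.\ to $\mu<\sqrt{(N-2)^2+1+V_0^2}$. Fixing any admissible small $c>0$ then produces the constant $c^{\sharp}_{\mu+1}=c$ and completes the two-sided estimate.
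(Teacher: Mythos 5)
Your proof is correct and follows essentially the same route as the paper: both expand $E^{\sharp}_{\mu+1}$ into the same four integrals, control the cross term $V_0\int_D w\,\pa_tw\,|x|^{\mu}\,dx$ by Cauchy--Schwarz/Young, and apply the Hardy inequality with $\beta=\mu+1$, arriving at exactly the discriminant condition $\mu^2<(N-2)^2+1+V_0^2$. Your packaging of the small parameter as positive definiteness of a $2\times2$ matrix is just a reformulation of the paper's $\ep$-Young argument, so there is nothing further to add.
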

\begin{proof}
Since for every $\ep>0$, the Young inequality gives
\[
V_0
\left|\int_D w\pa_tw|x|^{\mu}\,dx\right|
\leq 
\frac{1}{1+\ep}
\int_D(\pa_tw)^2|x|^{\mu+1}\,dx
+
\frac{V_0^2}{4}(1+\ep)
\int_D w^2|x|^{\mu-1}\,dx, 
\]
we have
\begin{align*}
E^{\sharp}_{\mu+1}(w;t)
&=
\int_D |\nabla w|^2|x|^{\mu+1}\,dx
-\frac{(\mu+1)(N-2+\mu)}{2}
\int_D w^2|x|^{\mu-1}\,dx
\\
&\quad
+
\int_D(\pa_tw)^2|x|^{\mu+1}\,dx
+V_0\int_D w\pa_tw|x|^\mu\,dx 
+ \frac{V_0^2}{2}
\int_D w^2|x|^{\mu-1}\,dx
\\
&\geq 
\int_D |\nabla w|^2|x|^{\mu+1}\,dx
+
\frac{\ep}{1+\ep}\int_D (\pa_tw)^2|x|^{\mu+1}\,dx
\\
&\quad
+\left(\frac{V_0^2}{4}(1-\ep)-\frac{(\mu+1)(N-2+\mu)}{2}\right)
\int_D w^2|x|^{\mu-1}\,dx.
\end{align*}
Moreover, using Lemma \ref{lem:hardy} with $\beta=\mu+1>2-N$, we see
\begin{align*}
E^{\sharp}_{\mu+1}(w;t)
&\geq 
\ep \int_D |\nabla w|^2|x|^{\mu+1}\,dx
+
\frac{\ep}{1+\ep}\int_D (\pa_tw)^2|x|^{\mu+1}\,dx
\\
&\quad 
+
\frac{1}{4}
\Big[(N-2)^2+1+V_0^2-\mu^2
-\big((N-1+\mu)^2+V_0^2)\ep
\Big]
\int_D w^2|x|^{\mu-1}\,dx.
\end{align*}
Choosing $\ep>0$ small enough, we deduce the desired lower bound.
The calculation for the upper bound is almost the same as above. 
\end{proof}
To provide estimates for the derivatives of $\widetilde{E}_{\mu+1}$ and $E_\mu^*$, 
we need to introduce a weighted gradient
\[
\nabla_{\!\mu}w=\nabla w+\frac{N-2+\mu}{2}\frac{x}{|x|^2}w
\]
(if we consider the problem \eqref{eq:usualdw2} with $\alpha=1$, 
then we choose $\nabla_{\!\mu}w=\nabla w+\frac{N-2+\mu}{2}\frac{x}{\lr{x}^2}w$).
The following lemma describes the relation between 
$\nabla$ and $\nabla_{\!\mu}$ in the sense of weighted $L^2$-norms. 
\begin{lemma}\label{lem:D_m}
If $\mu>2-N$, then for every $w\in H_0^1(D)$, 
\begin{gather*}
\frac{1}{2}
\int_D|\nabla w|^2|x|^{\mu}\,dx
\leq 
\int_D|\nabla_{\!\mu}w|^2|x|^{\mu}\,dx
+
\left(\frac{N-2+\mu}{2}\right)^2\int_D w^2{|x|^{\mu-2}}\,dx,
\\
5\int_D|\nabla w|^2|x|^{\mu}\,dx\geq 
\int_D|\nabla_{\!\mu}w|^2|x|^{\mu}\,dx
+
\left(\frac{N-2+\mu}{2}\right)^2\int_D w^2{|x|^{\mu-2}}\,dx.
\end{gather*}
\end{lemma}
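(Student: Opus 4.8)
The plan is to prove Lemma~\ref{lem:D_m} by a direct computation, expanding the weighted gradient $\nabla_{\!\mu}w$ and integrating the resulting cross term by parts. First I would compute
\[
\int_D|\nabla_{\!\mu}w|^2|x|^{\mu}\,dx
=
\int_D|\nabla w|^2|x|^{\mu}\,dx
+(N-2+\mu)\int_D w\,\frac{x\cdot\nabla w}{|x|^2}\,|x|^{\mu}\,dx
+\left(\frac{N-2+\mu}{2}\right)^2\int_D w^2|x|^{\mu-2}\,dx.
\]
The whole lemma hinges on the middle cross term, which I would rewrite using $w\,(x\cdot\nabla w)=\tfrac12\,x\cdot\nabla(w^2)$ and integrate by parts. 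Since $w\in H_0^1(D)$ vanishes on $\pa D$ and $0\notin\overline{D}$ keeps the weight $|x|^{\mu-2}$ smooth on $\overline{D}$, the boundary contribution drops and the divergence of $x\,|x|^{\mu-2}$ produces $(N+\mu-2)|x|^{\mu-2}$. This yields
\[
(N-2+\mu)\int_D w\,\frac{x\cdot\nabla w}{|x|^2}\,|x|^{\mu}\,dx
=-\frac{(N-2+\mu)^2}{2}\int_D w^2|x|^{\mu-2}\,dx,
\]
so the cross term exactly cancels twice the zeroth-order term. Substituting back gives the clean identity
\[
\int_D|\nabla_{\!\mu}w|^2|x|^{\mu}\,dx
=
\int_D|\nabla w|^2|x|^{\mu}\,dx
-\left(\frac{N-2+\mu}{2}\right)^2\int_D w^2|x|^{\mu-2}\,dx.
\]

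With this identity in hand, both inequalities follow by estimating the Hardy term. Writing $A=\int_D|\nabla w|^2|x|^{\mu}\,dx$ and $H=\bigl(\tfrac{N-2+\mu}{2}\bigr)^2\int_D w^2|x|^{\mu-2}\,dx$, the identity reads $\int_D|\nabla_{\!\mu}w|^2|x|^{\mu}\,dx = A-H$, and the quantity I must control is $(A-H)+H=A$ versus the target $\tfrac12 A$ (lower) and $5A$ (upper) --- so the claim reduces to showing $\tfrac12 A\le A\le 5A$ after adding $H$ back, i.e.\ to controlling $H$ itself by $A$. By Hardy's inequality (Lemma~\ref{lem:hardy} with $\beta=\mu$, valid since $N-2+\mu>0$) one has $H\le A$, hence $0\le A-H$ and therefore $\int_D|\nabla_{\!\mu}w|^2|x|^{\mu}\,dx+H=A$ directly gives the sum on the right equals $A$; the two-sided bound $\tfrac12 A\le A\le 5A$ is then immediate with generous room to spare.

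Concretely, for the lower bound I add $H$ to both sides of the identity to obtain $\int_D|\nabla_{\!\mu}w|^2|x|^{\mu}\,dx+H=A\ge\tfrac12 A$, which is exactly the first inequality; for the upper bound the same equality gives $\int_D|\nabla_{\!\mu}w|^2|x|^{\mu}\,dx+H=A\le 5A$, which is the second. The main (and essentially only) obstacle is the integration by parts in the cross term: I must justify that no boundary term survives, which relies on the Dirichlet condition $w|_{\pa D}=0$ together with the hypothesis $0\notin\overline{D}$ ensuring the weight $x\,|x|^{\mu-2}$ is a $C^1$ vector field on $\overline{D}$ so that the divergence theorem applies without a singularity at the origin. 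An approximation by $C_0^\infty(D)$ functions (dense in $H_0^1(D)$) makes the integration by parts rigorous, after which both estimates are purely algebraic consequences of the resulting identity and Hardy's inequality, with the constants $\tfrac12$ and $5$ chosen to absorb any slack.
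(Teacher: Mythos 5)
Your proof is correct, but it follows a genuinely different route from the paper's. The paper never integrates by parts here: it writes $\nabla w=\nabla_{\!\mu}w-\frac{N-2+\mu}{2}\frac{x}{|x|^2}w$, applies the pointwise Young inequality $|a-b|^2\le 2|a|^2+2|b|^2$ in both directions to get
\begin{equation*}
\int_D|\nabla w|^2|x|^{\mu}\,dx\le 2\int_D|\nabla_{\!\mu}w|^2|x|^{\mu}\,dx+2H,
\qquad
\int_D|\nabla_{\!\mu}w|^2|x|^{\mu}\,dx\le 2\int_D|\nabla w|^2|x|^{\mu}\,dx+2H,
\end{equation*}
with $H=\bigl(\tfrac{N-2+\mu}{2}\bigr)^2\int_D w^2|x|^{\mu-2}\,dx$, and then invokes the Hardy inequality (Lemma~\ref{lem:hardy} with $\beta=\mu$, which is where $\mu>2-N$ enters) to absorb the remaining $3H$ into $3\int_D|\nabla w|^2|x|^{\mu}\,dx$ and reach the constant $5$. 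You instead expand the square and integrate the cross term by parts, obtaining the exact identity $\int_D|\nabla_{\!\mu}w|^2|x|^{\mu}\,dx=\int_D|\nabla w|^2|x|^{\mu}\,dx-H$; your divergence computation and the justification of the integration by parts (Dirichlet data, $0\notin\overline{D}$, density of $C_0^\infty$) are all sound. Your identity is strictly stronger: it shows the right-hand side of both displayed inequalities equals $\int_D|\nabla w|^2|x|^{\mu}\,dx$ exactly, so the constants $\tfrac12$ and $5$ hold with enormous slack, and it yields the Hardy inequality as a free byproduct (since $\int_D|\nabla_{\!\mu}w|^2|x|^{\mu}\,dx\ge 0$ forces $H\le\int_D|\nabla w|^2|x|^{\mu}\,dx$). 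Your separate appeal to Lemma~\ref{lem:hardy} is therefore redundant, and the hypothesis $\mu>2-N$ is not even needed for your argument. The trade-off is that the paper's route is purely algebraic and needs no regularization, while yours requires the (easily justified) integration by parts; it is in fact the same computation the paper performs later in the proof of Lemma~\ref{lem:energy2}.
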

\begin{proof}
Using the Young inequality, we have 
\begin{align*}
\int_D|\nabla w|^2|x|^{\mu}\,dx
&=
\int_D\left|\nabla_{\!\mu} w-\frac{N-2+\mu}{2}\frac{x}{|x|^2}w\right|^2|x|^{\mu}\,dx
\\
&=
\int_D\left(|\nabla_{\!\mu} w|^2-(N-2+\mu)\nabla_{\!\mu}w\cdot\frac{x}{|x|^2}w+\left(\frac{N-2+\mu}{2}\right)^{2}\frac{w^2}{|x|^2}\right)|x|^{\mu}\,dx
\\
&\leq 
2\int_D\left(|\nabla_{\!\mu} w|^2+\left(\frac{N-2+\mu}{2}\right)^{2}\frac{w^2}{|x|^2}\right)|x|^{\mu}\,dx.
\end{align*}
As in the same way, we see that 
\[
\int_D|\nabla_{\!\mu} w|^2|x|^{\mu}\,dx
\leq 
2\int_D\left(|\nabla w|^2+\left(\frac{N-2+\mu}{2}\right)^{2}\frac{w^2}{|x|^2}\right)|x|^{\mu}\,dx.
\]
Using Lemma \ref{lem:hardy} with $\beta=\mu$, we obtain 
\[
\int_D|\nabla_{\!\mu} w|^2|x|^{\mu}\,dx+
\left(\frac{N-2+\mu}{2}\right)^{2}
\int_D w^2|x|^{\mu-2}\,dx\leq 
5\int_D|\nabla w|^2|x|^{\mu}\,dx.
\]
The proof is complete. 
\end{proof}
Here we consider the estimate for $E_{\mu+1}^\sharp$. 
The following two lemmas are the estimates 
for the derivatives of $\widetilde{E}_{\mu+1}$ 
and $E_{\mu}^*$, respectively. 
\begin{lemma}\label{lem:energy1}
Assume that 
$(w_0,w_1)\in H^1_0(D)\times L^2(D)$ 
and  $F\in C([0,T];L^2(D))$.
Let $w$ be the solution of \eqref{AP}. 
Then for every $t\in (0,T)$, 
\begin{align*}
\frac{d}{dt}\widetilde{E}_{\mu+1}(w;t)
&\leq 
-2V_0
\int_D(\pa_tw)^2|x|^{\mu}\,dx
+
2
\int_D\pa_twF|x|^{\mu+1}\,dx
\\
&\quad
+2(\mu+1)
\left(\int_D(\pa_tw)^2|x|^{\mu}\,dx\right)^{\frac{1}{2}}
\left(\int_D|\nabla_{\!\mu}w|^2|x|^{\mu}\,dx\right)^{\frac{1}{2}}.
\end{align*}
\end{lemma}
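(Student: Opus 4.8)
The plan is to differentiate $\widetilde{E}_{\mu+1}(w;t)$ directly in $t$ and to track the origin of each term on the right-hand side. Since $w$ is only a weak solution lying in $H^1_0(D)$, I would first perform the computation for the regularized data $(J_n w_0, J_n w_1, J_n F)$ introduced after Proposition \ref{prop:3-1}; for these the solution is regular enough to differentiate under the integral sign and to integrate by parts, and the desired estimate then follows by passing to the limit $n\to\infty$. Because $D$ is bounded and $0\notin\overline{D}$, the weights $|x|^{\mu-1},|x|^{\mu},|x|^{\mu+1}$ are all bounded above and below, so every integral appearing below is finite.

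First I would compute $\frac{d}{dt}E_{\mu+1}(w;t)=2\int_D\big(\nabla w\cdot\nabla\pa_t w+\pa_t w\,\pa_t^2 w\big)|x|^{\mu+1}\,dx$. Integrating the gradient term by parts, with vanishing boundary contribution since $w=0$ (hence $\pa_t w=0$) on $\pa D$, turns it into $-2\int_D \pa_t w\,\nabla\cdot(|x|^{\mu+1}\nabla w)\,dx$, and expanding the divergence yields one term carrying $|x|^{\mu+1}\Delta w$ and a first-order term carrying $(\mu+1)|x|^{\mu-1}(x\cdot\nabla w)$. Substituting $\Delta w=\pa_t^2 w+\frac{V_0}{|x|}\pa_t w-F$ from \eqref{AP}, the two $\pa_t w\,\pa_t^2 w$ contributions cancel exactly, leaving the damping term $-2V_0\int_D(\pa_t w)^2|x|^{\mu}\,dx$, the forcing term $2\int_D\pa_t w\,F|x|^{\mu+1}\,dx$, and the leftover first-order term $-2(\mu+1)\int_D\pa_t w\,|x|^{\mu-1}(x\cdot\nabla w)\,dx$.

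The decisive step is to absorb this first-order term using the correction subtracted in $\widetilde{E}_{\mu+1}$. Differentiating $-\frac{(\mu+1)(N-2+\mu)}{2}\int_D w^2|x|^{\mu-1}\,dx$ produces $-(\mu+1)(N-2+\mu)\int_D w\,\pa_t w\,|x|^{\mu-1}\,dx$; adding this to the leftover term and invoking $x\cdot\nabla_{\!\mu}w=x\cdot\nabla w+\frac{N-2+\mu}{2}w$, the two collapse precisely into $-2(\mu+1)\int_D\pa_t w\,|x|^{\mu-1}(x\cdot\nabla_{\!\mu}w)\,dx$. This exact matching is exactly what the definition of $\nabla_{\!\mu}$ and the choice of the correction coefficient are engineered to produce.

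Finally I would estimate this combined term from above: the pointwise bound $|x\cdot\nabla_{\!\mu}w|\le|x|\,|\nabla_{\!\mu}w|$ gives $|x|^{\mu-1}|x\cdot\nabla_{\!\mu}w|\le|x|^{\mu}|\nabla_{\!\mu}w|$, and the Cauchy--Schwarz inequality in the weight $|x|^{\mu}$ then bounds it by $2(\mu+1)\big(\int_D(\pa_t w)^2|x|^{\mu}\,dx\big)^{1/2}\big(\int_D|\nabla_{\!\mu}w|^2|x|^{\mu}\,dx\big)^{1/2}$, which is the last term in the claim. The algebra is essentially forced by the definitions, so I do not anticipate a serious obstacle; the only genuine care needed is the regularization argument that legitimizes differentiation under the integral and integration by parts for the merely weak solution $w$.
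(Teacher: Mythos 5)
Your proposal is correct and follows essentially the same route as the paper: differentiate $\widetilde{E}_{\mu+1}$, integrate the gradient term by parts, substitute the equation to produce the damping and forcing terms, merge the first-order term with the derivative of the correction term into $x\cdot\nabla_{\!\mu}w$, and finish with Cauchy--Schwarz. The regularization via $J_n$ that you invoke to justify the integration by parts is exactly the device the paper sets up immediately after Proposition \ref{prop:3-1}.
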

\begin{proof}
By the definitions of $\widetilde{E}_{\mu+1}$ and $\nabla_{\!\mu}$, 
we see from integration by parts that
\begin{align*}
\frac{d}{dt}\widetilde{E}_{\mu+1}(w;t)
&=
2\int_D\Big(\pa_tw\pa_t^2w+\nabla \pa_tw\cdot\nabla w\Big)|x|^{\mu+1}\,dx
\\
&\quad-(\mu+1)(N-2+\mu)
\int_D w\pa_tw|x|^{\mu-1}\,dx
\\
&=
2\int_D \pa_tw\Big(\pa_t^2w-\Delta w\Big)|x|^{\mu+1}\,dx
-2(\mu+1)\int_D \pa_tw\nabla w\cdot x|x|^{\mu-1}\,dx
\\
&\quad
-(\mu+1)(N-2+\mu)
\int_D w\pa_tw|x|^{\mu-1}\,dx
\\
&=
2\int_D \pa_tw\Big(\pa_t^2w-\Delta w\Big)|x|^{\mu+1}\,dx
-2(\mu+1)\int_D
\nabla_{\!\mu}w\cdot \frac{x}{|x|}\pa_tw|x|^{\mu}\,dx.
\end{align*}
Using the equation in \eqref{AP} and the Young inequality, 
we deduce the desired inequality. 
\end{proof}
\begin{lemma}\label{lem:energy2}
Assume that 
$(w_0,w_1)\in H^1_0(D)\times L^2(D)$ 
and  $F\in C([0,T];L^2(D))$.
Let $w$ be the solution of \eqref{AP}. 
Then for every $t\in (0,T)$, 
\begin{align}
\nonumber 
\frac{d}{dt}E_{\mu}^*(w;t)
&=
2
\int_D(\pa_tw)^2|x|^{\mu}\,dx
-2
\int_D |\nabla_{\!\mu}w|^2|x|^{\mu}\,dx
\\
\label{eq:E^*}
&\quad
-\frac{(N-2)^2-\mu^2}{2}
\int_D w^2|x|^{\mu-2}\,dx
+
2
\int_D w(t)F(t)|x|^{\mu}\,dx.
\end{align}
\end{lemma}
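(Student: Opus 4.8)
The plan is to differentiate $E_\mu^*(w;t)$ directly in time and then substitute the equation from \eqref{AP}. In contrast to Lemma \ref{lem:energy1}, the assertion here is an \emph{exact identity}, so every manipulation must be an equality and no Young-type estimate is allowed. The approximation by $J_nw_0$, $J_nw_1$, $J_nF$ recorded after Proposition \ref{prop:3-1} is precisely what legitimizes differentiation under the integral sign and the integrations by parts below; one first establishes the identity for the regularized solutions and then passes to the limit.

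First I would differentiate term by term. Starting from
\[
E_\mu^*(w;t)=\int_D\Big(2w\pa_tw+\frac{V_0}{|x|}w^2\Big)|x|^{\mu}\,dx,
\]
this gives
\[
\frac{d}{dt}E_\mu^*(w;t)=\int_D\Big(2(\pa_tw)^2+2w\pa_t^2w+\frac{2V_0}{|x|}w\pa_tw\Big)|x|^{\mu}\,dx.
\]
Next I would replace $\pa_t^2w=\Delta w-\frac{V_0}{|x|}\pa_tw+F$ using the equation. The crucial observation is that the damping contribution $-\frac{2V_0}{|x|}w\pa_tw$ coming from $2w\pa_t^2w$ exactly cancels the term $\frac{2V_0}{|x|}w\pa_tw$ produced by differentiating the $V_0$-weighted part of $E_\mu^*$; this is exactly why the coefficient $V_0$ was built into $E_\mu^*$ to begin with. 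What remains is
\[
\frac{d}{dt}E_\mu^*(w;t)=\int_D\Big(2(\pa_tw)^2+2w\Delta w+2wF\Big)|x|^{\mu}\,dx.
\]

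The heart of the computation is then the term $\int_D 2w\Delta w|x|^{\mu}\,dx$. Since $w\in H^1_0(D)$, the boundary term vanishes, and I would integrate by parts once to obtain $-\int_D|\nabla w|^2|x|^{\mu}\,dx-\int_D w\nabla w\cdot\nabla(|x|^\mu)\,dx$, then write $w\nabla w=\tfrac12\nabla(w^2)$ and $\nabla(|x|^\mu)=\mu x|x|^{\mu-2}$ and integrate by parts a second time, using the elementary identity $\mathrm{div}(x|x|^{\mu-2})=(N-2+\mu)|x|^{\mu-2}$. This produces the exact equality
\[
\int_D w\Delta w|x|^{\mu}\,dx=-\int_D|\nabla w|^2|x|^{\mu}\,dx+\frac{\mu(N-2+\mu)}{2}\int_D w^2|x|^{\mu-2}\,dx.
\]
To bring in the weighted gradient I would expand $|\nabla_{\!\mu}w|^2$ from its definition and integrate the cross term by parts (the same $\mathrm{div}(x|x|^{\mu-2})$ computation), obtaining the exact version of the estimate in Lemma \ref{lem:D_m},
\[
\int_D|\nabla w|^2|x|^{\mu}\,dx=\int_D|\nabla_{\!\mu}w|^2|x|^{\mu}\,dx+\Big(\frac{N-2+\mu}{2}\Big)^2\int_D w^2|x|^{\mu-2}\,dx.
\]

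Finally I would collect the coefficients of $\int_D w^2|x|^{\mu-2}\,dx$, which combine as
\[
-\Big(\frac{N-2+\mu}{2}\Big)^2+\frac{\mu(N-2+\mu)}{2}=\frac{(N-2+\mu)\big(\mu-(N-2)\big)}{4}=\frac{\mu^2-(N-2)^2}{4},
\]
so that $2\int_D w\Delta w|x|^{\mu}\,dx=-2\int_D|\nabla_{\!\mu}w|^2|x|^{\mu}\,dx-\frac{(N-2)^2-\mu^2}{2}\int_D w^2|x|^{\mu-2}\,dx$. Adding back the $2(\pa_tw)^2$ and $2wF$ terms yields exactly \eqref{eq:E^*}. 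I expect the only delicate point to be the bookkeeping of the two integrations by parts and the factorization $(N-2+\mu)(\mu-(N-2))=\mu^2-(N-2)^2$; once the damping cancellation in the second step is recognized, the rest is forced.
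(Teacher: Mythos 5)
Your proof is correct and follows essentially the same route as the paper: differentiate $E_\mu^*$, use the equation to cancel the damping terms, and integrate $\int_D w\Delta w|x|^{\mu}\,dx$ by parts twice to produce the $|\nabla_{\!\mu}w|^2$ term and the coefficient $\frac{(N-2)^2-\mu^2}{4}$. The only cosmetic difference is that the paper completes the square to introduce $\nabla_{\!\mu}w$ inside the integration-by-parts computation, whereas you convert $|\nabla w|^2$ to $|\nabla_{\!\mu}w|^2$ in a separate exact identity; the bookkeeping is equivalent.
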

\begin{proof}
By using the equation in \eqref{AP}, we have 
\begin{align*}
\frac{d}{dt}E_{\mu}^*(w;t)
&=
2
\int_D (\pa_tw)^2|x|^{\mu}\,dx
+
2\int_D w\Big(\pa_t^2w+\frac{V_0}{|x|}\pa_tw\Big)|x|^{\mu}\,dx
\\
&=
2
\int_D (\pa_tw)^2|x|^{\mu}\,dx
+
2\int_D w\Big(\Delta w+F\Big)|x|^{\mu}\,dx
\\
&=
2
\int_D (\pa_tw)^2|x|^{\mu}\,dx
+
2\int_D w\Delta w|x|^{\mu}\,dx
+
2\int_D wF|x|^{\mu}\,dx.
\end{align*}
Observing that 
integration by parts provides
\begin{align*}
-\int_D w\Delta w|x|^{\mu}\,dx
&=
\int_D |\nabla w|^2|x|^{\mu}\,dx
+
\mu\int_D w\nabla w\cdot x|x|^{\mu-2}\,dx
\\
&=
\int_D \left|\nabla w+\frac{N-2+\mu}{2}\frac{x}{|x|^2}w\right|^2|x|^{\mu}\,dx
\\
&\quad-(N-2)
\int_D w\nabla w\cdot x|x|^{\mu-2}\,dx
-
\left(\frac{N-2+\mu}{2}\right)^2\int_D w^2|x|^{\mu-2}\,dx
\\
&=
\int_D |\nabla_{\!\mu}w|^2|x|^{\mu}\,dx
+
\frac{(N-2)^2-\mu^2}{4}
\int_D w^2|x|^{\mu-2}\,dx, 
\end{align*}
we obtain \eqref{eq:E^*}. 
\end{proof}
The following lemma is the estimate 
for the derivative of $E_{\mu+1}^{\sharp}$, 
which is a summary of Lemmas 
\ref{lem:energy1} and \ref{lem:energy2}. 
\begin{lemma}\label{lem:energy3}
Assume that 
$(w_0,w_1)\in H^1_0(D)\times L^2(D)$ 
and  $F\in C([0,T];L^2(D))$.
Let $w$ be the solution of \eqref{AP}. 
Then for every $t\in (0,T)$, 
\begin{align*}
\frac{d}{dt}
E_{\mu+1}^{\sharp}(w;t)
&\leq 
-(V_0-|\mu+1|)
\int_D \Big(|\pa_tw(t)|^2+|\nabla_{\!\mu}w(t)|^2\Big)|x|^{\mu}\,dx
\\
&\quad 
-\frac{[(N-2)^2-\mu^2]V_0}{4}
\int_D w^2|x|^{\mu-2}\,dx
\\
&
\quad
+2
\int_\Omega \pa_tw(t)F(t)|x|^{\mu+1}\,dx
+
V_0
\int_\Omega w(t)F(t)|x|^{\mu}\,dx. 
\end{align*}
In particular, if $|\mu+1|<V_0$, then
there exist positive constants $\delta_\mu'$ and $C_{\mu+1}'$ such that 
\begin{align*}
&\frac{d}{dt}E_{\mu+1}^{\sharp}(w;t)
+
\delta_\mu' E_{\mu}^{\sharp}(w;t)
\\
&\leq 
\begin{cases}
C_{\mu+1}'
\displaystyle\int_D F^2|x|^{\mu+2}\,dx
&\text{if}\ |\mu|<N-2,
\\
C_{\mu+1}'
\displaystyle
\left(
\int_D w^2|x|^{\mu-2}\,dx+
\int_D F^2|x|^{\mu+2}\,dx\right)
&\text{if}\ |\mu|\geq N-2.
\end{cases}
\end{align*}
\end{lemma}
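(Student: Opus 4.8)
The plan is to derive the first displayed inequality by adding the estimate for $\frac{d}{dt}\widetilde{E}_{\mu+1}$ in Lemma \ref{lem:energy1} to $\frac{V_0}{2}$ times the identity for $\frac{d}{dt}E_{\mu}^*$ in Lemma \ref{lem:energy2}, since $E^{\sharp}_{\mu+1}=\widetilde{E}_{\mu+1}+\frac{V_0}{2}E_{\mu}^*$. Write $a=\int_D(\pa_tw)^2|x|^{\mu}\,dx$ and $b=\int_D|\nabla_{\!\mu}w|^2|x|^{\mu}\,dx$. The $(\pa_tw)^2$ contributions combine as $-2V_0a+V_0a=-V_0a$, Lemma \ref{lem:energy2} supplies $-V_0b$, and Lemma \ref{lem:energy1} supplies the cross term $2(\mu+1)\sqrt{a}\sqrt{b}$. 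Estimating this by $2|\mu+1|\sqrt{ab}\leq|\mu+1|(a+b)$ via the arithmetic--geometric mean inequality collapses the three quadratic terms into $-(V_0-|\mu+1|)(a+b)$, which is exactly the claimed bound; the $w^2|x|^{\mu-2}$ term and the two forcing terms are carried over unchanged from $\frac{V_0}{2}$ times Lemma \ref{lem:energy2} and from Lemma \ref{lem:energy1}.

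For the second assertion I assume $|\mu+1|<V_0$, so the coefficient $-(V_0-|\mu+1|)$ is strictly negative, and I feed the negative quadratic terms back into $\delta_\mu' E_{\mu}^{\sharp}$. First I bound $E_{\mu}^{\sharp}=\widetilde{E}_{\mu}+\frac{V_0}{2}E_{\mu-1}^*$ from above by the available nonnegative quantities: the cross term $V_0\int_D w\pa_tw|x|^{\mu-1}\,dx$ is handled by Young's inequality together with the Hardy inequality (Lemma \ref{lem:hardy}, valid since $N-2+\mu>0$), and Lemma \ref{lem:D_m} then replaces $\int_D|\nabla w|^2|x|^{\mu}\,dx$, yielding $E_{\mu}^{\sharp}\leq C\bigl(a+b+\int_D w^2|x|^{\mu-2}\,dx\bigr)$. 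The two forcing terms are treated by Young's inequality after splitting the weights as $|x|^{\mu+1}=|x|^{\mu/2}\cdot|x|^{(\mu+2)/2}$ and $|x|^{\mu}=|x|^{(\mu-2)/2}\cdot|x|^{(\mu+2)/2}$, which produces a bound of the form $\ep a+\ep\int_D w^2|x|^{\mu-2}\,dx+C_\ep\int_D F^2|x|^{\mu+2}\,dx$.

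The remaining point, and the only place where the dichotomy enters, is the sign of the coefficient $-\frac{[(N-2)^2-\mu^2]V_0}{4}$ of $\int_D w^2|x|^{\mu-2}\,dx$. When $|\mu|<N-2$ this coefficient is strictly negative, so this genuinely negative term absorbs both the $w^2$ contribution coming from the upper bound on $\delta_\mu' E_{\mu}^{\sharp}$ and the $\ep$-piece from Young's inequality; choosing $\delta_\mu'$ and $\ep$ small against the fixed gap $V_0-|\mu+1|>0$ leaves only $C_{\mu+1}'\int_D F^2|x|^{\mu+2}\,dx$ on the right. When $|\mu|\geq N-2$ this coefficient is nonnegative, so the term no longer helps and must be tolerated on the right-hand side, where it merges with the $w^2$ contribution from the bound on $E_{\mu}^{\sharp}$ into the extra summand $C_{\mu+1}'\int_D w^2|x|^{\mu-2}\,dx$; the $a$ and $b$ terms are again absorbed by shrinking $\delta_\mu'$. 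I expect the bookkeeping of these absorptions---in particular checking that a single positive multiple of $\int_D w^2|x|^{\mu-2}\,dx$ covers every occurrence in the subcritical case---to be the only delicate step, the rest being the mechanical combination described above.
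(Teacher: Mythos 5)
Your proposal is correct and follows exactly the route the paper intends: the paper gives no written proof of this lemma, stating only that it is ``a summary of Lemmas \ref{lem:energy1} and \ref{lem:energy2},'' and your combination $\frac{d}{dt}E^{\sharp}_{\mu+1}=\frac{d}{dt}\widetilde{E}_{\mu+1}+\frac{V_0}{2}\frac{d}{dt}E_{\mu}^{*}$ with the arithmetic--geometric mean bound on the cross term reproduces the first inequality precisely. Your treatment of the ``in particular'' part---bounding $E_{\mu}^{\sharp}$ above via Hardy and Lemma \ref{lem:D_m}, absorbing the forcing terms by Young's inequality, and letting the sign of $(N-2)^2-\mu^2$ dictate whether the $w^2|x|^{\mu-2}$ term is absorbed or carried to the right---is the correct bookkeeping that the paper leaves to the reader.
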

\begin{remark}
In the homogeneous case $F\equiv 0$,  
we have from the first inequality in Lemma \ref{lem:energy3} that if $V_0\leq N-1$, then choosing $\mu=V_0-1$, we have
\[
c_{V_0}^\sharp E_{V_0}(w,t)
\leq C_{V_0}^\sharp 
E_{V_0}(w;0).
\]
Proceeding the proof of Ikehata--Todorova--Yordanov \cite[Proposition 2.2]{ITY2013}, we can deduce the energy decay estimate 
\[
E_{0}(w;t)\leq Ct^{-V_0}
E_{V_0}(w,0)
\]
which is the same as \cite[Theorem 1.1]{ITY2013} (for $1<V_0\leq N-1$). 
\end{remark}
Here we prove  Proposition \ref{prop:3-1} under the restriction $m<N-2$. 
\begin{proof}[Proof of Proposition \ref{prop:3-1} when $0<m<N-2$]
We see from Lemma \ref{lem:energy3} with $\mu=m$ that
\begin{align*}
E_{m+1}^{\sharp}(w;t)+\delta_m' 
\int_0^tE_{m}^{\sharp}(w,s)\,ds
&\leq 
E_{m+1}^{\sharp}(w;0)+
 C_{m+1}'\int_0^t\!\!\int_D F(s)^2|x|^{m+2}\,dx\,ds
\end{align*}
Moreover, Lemma \ref{lem:energy3} with $\mu=m-1>-1$ gives
\begin{align*}
&\frac{d}{dt}
\Big[
 (1+t)E_{m}^{\sharp}(w;t)
\Big]
+\delta_{m-1}' 
(1+t)E_{m-1}^{\sharp}(w;t)
\\
&\leq 
(1+t)\Big[\frac{d}{dt}E_{m}^{\sharp}(w;t)
+\delta_{m-1}' 
E_{m-1}^{\sharp}(w;t)\Big]
+E_{m}^{\sharp}(w;t)
\\
&\leq 
C_{m}'(1+t)
\int_D F^2|x|^{m+1}\,dx
+E_{m}^{\sharp}(w,t)
\end{align*}
and therefore we have 
\begin{align*}
&(1+t)E_{m}^{\sharp}(w;t)
+
\delta_{m-1}' 
\int_0^t(1+s)E_{m-1}^{\sharp}(w;s)\,ds
\\
&\leq 
E_{m}^{\sharp}(w;0)+ 
C_{m}'
\int_0^t(1+s)\int_D F(s)^2|x|^{m+1}\,dx\,ds
+\frac{C_{m+1}'}{\delta_{m}'}
\int_0^t\!\!\int_D F(s)^2|x|^{m+2}\,dx\,ds.
\end{align*}
Noting that for every $\mu\in [0,m+1]$, 
$E_{\mu}^{\sharp}(w;0)\leq C_\mu^\sharp E_{m+1}^{\Psi}(w,0), 
$ (by Lemma \ref{lem:energy0})
and 
$(1+t)^{\mu}|x|^{m-\mu+2}\leq |x|\Psi^{m+1}$, 
we can deduce by iteration that 
choosing $k\in \N\cap[m,m+1)$, we have
\begin{align*}
&(1+t)^kE_{m+1-k}^{\sharp}(w;t)
+
\widetilde{\delta} 
\int_0^t(1+s)^{k}E_{m-k}^{\sharp}(w;s)\,ds
\\
&\leq 
\widetilde{C}\left(
E_{m}^{\Psi}(w;0)+ 
\int_0^t\!\!\int_D F(s)^2|x|\Psi^{m+1}\,dx\,ds
\right)
\end{align*}
for some positive constants $\widetilde{\delta}$ and $\widetilde{C}$.
Since 
\[
(1+s)^{m}E_0(w;s)\leq 
\Big((1+s)^{k}E_{m-k}(w;s)\Big)^{\frac{m}{k}}
\Big(E_{m}(w,s)\Big)^{1-\frac{m}{k}}, 
\]
we have 
\[
\int_{0}^tE_{m}^{\Psi}(w;s)\,ds
\leq 
\widetilde{C}'\left(
E_{m}^{\Psi}(w;0)+ 
\int_0^t\!\!\int_D F(s)^2\Psi(s)^{m+1}|x|\,dx\,ds
\right)
\]
for some positive constant $\widetilde{C}'$.
Furthermore, noting that 
\begin{align*}
\frac{d}{dt}
\Big[(1+t)^{m+1}E_0(w;t)\Big]
&=
m(1+t)^{m}E_0(w;t)+2(1+t)^{m+1}\int_D\pa_tw\left(-\frac{V_0}{|x|}\pa_tw+F\right)dx
\\
&\leq 
m(1+t)^{m}E_0(w;t)+\frac{1}{2V_0}(1+t)^{m+1}\int_DF^2|x|dx
\\
&\leq 
m(1+t)^{m}E_0(w;t)+\frac{1}{2V_0}\int_DF^2\Psi^{m+1}|x|dx,
\end{align*}
we obtain the desired estimate for $E^{\Psi}_{m+1}(w;t)$. 
The proof is complete.
\end{proof}
\subsection{Energy estimates for \texorpdfstring{$N-2\leq m<N-1$}{}}
In this case, we introduce 
auxiliary functions $\psi_1$, $v$ and $U$
which are given 
as the solutions of the following elliptic, parabolic and hyperbolic problems, 
respectively: 
\begin{gather}\label{eq:psi1}
\begin{cases}
\dfrac{\lambda}{|x|^2}\psi_1-\Delta \psi_1=w_1
& \text{in}\ D,
\\
\psi_1=0
& \text{on}\ \pa D, 
\end{cases}
\\
\label{eq:v}
\begin{cases}
\dfrac{V_0}{|x|}\pa_tv-\Delta v=F
& \text{in}\ D\times(0,T),
\\
v=0
& \text{on}\ \pa D\times(0,\infty),
\\
v(0)=v_0:=w_0+\dfrac{\lambda}{V_0|x|}\psi_1, 
\end{cases}
\\
\label{eq:U}
\begin{cases}
\pa_t^2U-\Delta U+\dfrac{V_0}{|x|}\pa_tU=-\pa_tv
& \text{in}\ D\times(0,T),
\\
U=0
& \text{on}\ \pa D\times(0,T),
\\
(U,\pa_tU)(0)=\Big(-\psi_1,-\dfrac{\lambda}{V_0|x|}\psi_1\Big),   
\end{cases}
\end{gather}
where $(w_0,w_1)\in H^1_0(D)\times L^2(D)$, $F\in C([0,T);L^2(D))$
and $\lambda=\max\limits_{-1\leq \mu\leq m}\lambda_\mu$ 
with $\lambda_\mu=\frac{(\mu-1)(N-3+\mu)}{2}+1$. 

The following lemma provides 
the existence of a unique solution $\psi_1$ of \eqref{eq:psi1}
and its estimate.  
\begin{lemma}\label{lem:ell}
Assume that $w_1\in L^2(D)$.
Then there exists a unique solution $\psi_1\in H^2(D)\cap H^1_0(D)$ 
of \eqref{eq:psi1} for every $\lambda\geq 0$. 
Moreover, 
if $\mu\in\R$ and $\lambda\geq \lambda_\mu=\frac{(\mu-1)(N-3+\mu)}{2}+1$, then 
\[
\int_D \psi_1^2|x|^{\mu-3}\,dx
+
\int_D|\nabla \psi_1|^2|x|^{\mu-1}\,dx
\leq 
\int_D w_1^2|x|^{\mu+1}\,dx.
\]
In particular, if $w_0\in H^1_0(D)$, then one has
\begin{align*}
\int_D |\nabla v_0|^2|x|^{\mu+1}\,dx
\leq 
3
\int_D |\nabla w_0|^2|x|^{\mu+1}\,dx
+
\frac{3\lambda^2}{V_0^2}
\int_D w_1^2|x|^{\mu+1}\,dx. 
\end{align*}
\end{lemma}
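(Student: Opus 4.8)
The plan is to establish the three assertions in order: existence and $H^2$-regularity, then the weighted a priori bound (the heart of the lemma), and finally the consequence for $v_0$. Since $D$ is bounded and $0\notin\overline{D}$, the potential $\lambda|x|^{-2}$ is bounded and nonnegative on $\overline{D}$. Hence the bilinear form $a(\phi,\eta)=\int_D\nabla\phi\cdot\nabla\eta\,dx+\lambda\int_D|x|^{-2}\phi\eta\,dx$ is continuous on $H^1_0(D)$ and, because the zeroth-order term is nonnegative and $\lambda\geq0$, coercive ($a(\phi,\phi)\geq\|\nabla\phi\|_{L^2(D)}^2$). By the Lax--Milgram theorem there is for each $w_1\in L^2(D)\hookrightarrow H^{-1}(D)$ a unique weak solution $\psi_1\in H^1_0(D)$, and since $\pa D$ is smooth and the coefficient of the zeroth-order term is smooth on $\overline{D}$, standard elliptic regularity gives $\psi_1\in H^2(D)\cap H^1_0(D)$.

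For the weighted estimate I would test the equation in \eqref{eq:psi1} against $\psi_1|x|^{\mu-1}$, which is an admissible element of $H^1_0(D)$ because $|x|^{\mu-1}$ is smooth and bounded on $\overline{D}$. The potential term yields $\lambda\int_D\psi_1^2|x|^{\mu-3}\,dx$ directly. For the term $-\Delta\psi_1$ I integrate by parts once to produce $\int_D|\nabla\psi_1|^2|x|^{\mu-1}\,dx$ together with the cross term $\tfrac{\mu-1}{2}\int_D\nabla(\psi_1^2)\cdot x\,|x|^{\mu-3}\,dx$; integrating by parts a second time and using $\operatorname{div}(x|x|^{\mu-3})=(N+\mu-3)|x|^{\mu-3}$ turns the cross term into $-\tfrac{(\mu-1)(N+\mu-3)}{2}\int_D\psi_1^2|x|^{\mu-3}\,dx$ (all boundary integrals vanish since $\psi_1=0$ on $\pa D$). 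Collecting the terms gives the identity
\begin{equation*}
\int_D|\nabla\psi_1|^2|x|^{\mu-1}\,dx+\Big(\lambda-\tfrac{(\mu-1)(N+\mu-3)}{2}\Big)\int_D\psi_1^2|x|^{\mu-3}\,dx=\int_D w_1\psi_1|x|^{\mu-1}\,dx.
\end{equation*}

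The crux is the precise normalization: by the definition of $\lambda_\mu$ one has $\lambda-\tfrac{(\mu-1)(N+\mu-3)}{2}=\lambda-\lambda_\mu+1\geq1$ exactly when $\lambda\geq\lambda_\mu$, so writing $A=\int_D|\nabla\psi_1|^2|x|^{\mu-1}\,dx$, $B=\int_D\psi_1^2|x|^{\mu-3}\,dx$ and $W=\int_D w_1^2|x|^{\mu+1}\,dx$, the left-hand side dominates $A+B$. Splitting $|x|^{\mu-1}=|x|^{(\mu+1)/2}|x|^{(\mu-3)/2}$ and applying the Cauchy--Schwarz inequality bounds the right-hand side by $W^{1/2}B^{1/2}$, whence $A+B\leq W^{1/2}B^{1/2}$. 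This first forces $B\leq W$ and then $A+B\leq W^{1/2}B^{1/2}\leq W$, which is exactly the claimed inequality. I expect this absorption---arranging the constant to be exactly $1$ rather than a $\mu$-dependent factor---to be the only genuinely delicate point; it is precisely what the shift by $1$ in $\lambda_\mu=\tfrac{(\mu-1)(N-3+\mu)}{2}+1$ is designed to deliver.

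Finally, for the estimate on $v_0=w_0+\tfrac{\lambda}{V_0|x|}\psi_1$ I would compute $\nabla v_0=\nabla w_0+\tfrac{\lambda}{V_0}\big(|x|^{-1}\nabla\psi_1-|x|^{-3}\psi_1 x\big)$ and apply $(a+b+c)^2\leq3(a^2+b^2+c^2)$. Multiplying by $|x|^{\mu+1}$ and integrating, the three contributions are $3\int_D|\nabla w_0|^2|x|^{\mu+1}\,dx$, $3\tfrac{\lambda^2}{V_0^2}\int_D|\nabla\psi_1|^2|x|^{\mu-1}\,dx$ and $3\tfrac{\lambda^2}{V_0^2}\int_D\psi_1^2|x|^{\mu-3}\,dx$; the last two combine, via the a priori estimate just proved, into $3\tfrac{\lambda^2}{V_0^2}\int_D w_1^2|x|^{\mu+1}\,dx$, giving the stated bound. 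The hypothesis $\lambda\geq\lambda_\mu$ needed at each step holds automatically for every $\mu\in[-1,m]$ because $\lambda=\max_{-1\leq\mu\leq m}\lambda_\mu$.
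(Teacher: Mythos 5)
Your proposal is correct and follows essentially the same route as the paper: existence via solvability of $-\Delta$ plus a bounded nonnegative potential on the bounded domain $D$ with $0\notin\overline{D}$, the weighted bound by testing against $\psi_1|x|^{\mu-1}$ and integrating by parts twice so that the hypothesis $\lambda\geq\lambda_\mu$ leaves a coefficient at least $1$ in front of $\int_D\psi_1^2|x|^{\mu-3}\,dx$, and then the Cauchy--Schwarz absorption $A+B\leq W^{1/2}B^{1/2}\Rightarrow B\leq W\Rightarrow A+B\leq W$. The final estimate for $v_0$ via $(a+b+c)^2\leq 3(a^2+b^2+c^2)$ is the intended (and only natural) argument.
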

\begin{proof}
Noting that $0$ is in the resolvent of $-\Delta$ in bounded domain
and $\frac{\lambda}{|x|^2}$ is nonnegative 
and bounded, we see 
that there exists a unique solution $\psi_1\in H^2(D)\cap H^1_0(D)$. 
Then by integration by parts twice, we see 
\begin{align*}
\int_{D}(-\Delta \psi_1)\psi_1|x|^{m-1}\,dx
&=
\int_{D}|\nabla \psi_1|^2|x|^{m-1}\,dx
+(m-1)
\int_{D}\psi_1\nabla \psi_1 \cdot x|x|^{m-3}\,dx
\\
&=
\int_{D}|\nabla \psi_1|^2|x|^{m-3}\,dx
-
\frac{(m-1)(N-3+m)}{2}
\int_{D}\psi_1^2|x|^{m-3}\,dx.
\end{align*}
Therefore if $\lambda\geq \frac{(m-1)(N-3+m)}{2}+1$, we have
\begin{align*}
\int_D\psi_1^2|x|^{m-3}\,dx
+
\int_D|\nabla \psi_1|^2|x|^{m-1}\,dx
&\leq 
\int_D \left(\frac{\lambda}{|x|^2}\psi_1-\Delta\psi_1\right)\psi_1|x|^{m-1}\,dx
\\
&\leq 
\left(\int_D w_1^2|x|^{m+1}\,dx\right)^{\frac{1}{2}}
\left(\int_D \psi_1^2|x|^{m-3}\,dx\right)^{\frac{1}{2}}. 
\end{align*}
Therefore we obtain the desired inequality. 
\end{proof}

Next we consider the existence of a unique solution 
$v$ of \eqref{eq:v} and space-time weighed estimates of $v$ and $\pa_tv$. 

\begin{lemma}\label{lem:est-v1}
Assume that $(w_0,w_1)\in H^1_0(D)\times L^2(D)$ and $F\in C([0,T);L^2(D))$.
Then there exists a unique solution $v$ of \eqref{eq:v}. 
Moreover, $v$ satisfies
\begin{align*}
&\int_D \frac{v(t)^2}{|x|}\Psi(t)^{m}\,dx
+
\int_0^t
	\!\!\int_D |\nabla v(s)|^2\Psi(s)^{m}\,dx
\,ds
\\
&\leq 
C_{m,1}\Big(
\int_D \Big(|\nabla w_0|^2+w_1^2\Big)(1+|x|)^{m+1}\,dx
+
\int_0^t\!\!\int_\Omega F(s)^2\Psi(s)^{m+1}|x|\,dx\,ds\Big)
\end{align*}
for some positive constant $C_{m,1}$ depending only on $N$, $m$ and $V_0$.
\end{lemma}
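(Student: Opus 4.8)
The plan is to establish \eqref{eq:v} by a weighted energy argument, closing the estimate by testing against the supersolution weight $\Phi_\beta$ of Definition \ref{Phi}. For existence, note first that since $D$ is bounded and $0\notin\overline{D}$, the coefficient $\frac{|x|}{V_0}$ is smooth, bounded and bounded away from zero on $\overline{D}$, so \eqref{eq:v} is a nondegenerate parabolic problem. Writing it as $\pa_tv=\frac{|x|}{V_0}\Delta v+\frac{|x|}{V_0}F$, the operator $\frac{|x|}{V_0}\Delta$ with Dirichlet condition and domain $H^2(D)\cap H^1_0(D)$ is self-adjoint and dissipative on $L^2(D;\frac{V_0}{|x|}\,dx)$ and generates an analytic semigroup; hence a unique $v\in C([0,T);H^1_0(D))$ is produced by the variation-of-constants formula. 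The regularization $J_n=(1-\frac1n\Delta)^{-1}$ introduced after Proposition \ref{prop:3-1} legitimizes the integrations by parts below.

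First I would multiply the equation by $v\Psi^m$ and integrate over $D$. Using $\pa_t\Psi^m=m\Psi^{m-1}$, $\nabla\Psi=\frac{x}{|x|}$ and two integrations by parts, this yields
\begin{align*}
\frac{V_0}{2}\frac{d}{dt}\int_D\frac{v^2}{|x|}\Psi^m\,dx+\int_D|\nabla v|^2\Psi^m\,dx
&=\frac{(V_0+N-1)m}{2}\int_D\frac{v^2}{|x|}\Psi^{m-1}\,dx\\
&\quad+\frac{m(m-1)}{2}\int_D v^2\Psi^{m-2}\,dx+\int_D Fv\Psi^m\,dx.
\end{align*}
Since here $m$ reaches up to $N-1$ while $V_0>m+1$, the weighted Hardy inequality (Lemma \ref{lem:hardy2}) by itself gives a constant too large to absorb the first two (positive) lower-order terms into the dissipation $\int_D|\nabla v|^2\Psi^m\,dx$. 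This is the main obstacle.

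To overcome it I would use the ground-state substitution $v=\Phi_\beta V$, where $\Phi_\beta$ solves $\frac{V_0}{|x|}\pa_t\Phi_\beta=\Delta\Phi_\beta$ (Lemma \ref{lem:Phi}(i)); this recasts the equation in the divergence form $\frac{V_0}{|x|}\pa_tV=\Phi_\beta^{-2}\nabla\cdot(\Phi_\beta^2\nabla V)+\Phi_\beta^{-1}F$. Testing against $V\Phi_\beta^2\Psi^m$ reproduces precisely the energy $\frac{V_0}{2}\int_D\frac{v^2}{|x|}\Psi^m\,dx$ and a dissipation $\int_D\Phi_\beta^2|\nabla V|^2\Psi^m\,dx$, but now differentiating the weight $\Phi_\beta^2\Psi^m$ produces, through $\pa_t\Phi_\beta=-\beta\Phi_{\beta+1}$ (Lemma \ref{lem:Phi}(ii)), the additional \emph{favorable} term $V_0\beta\int_D\frac{v^2}{|x|}\frac{\Phi_{\beta+1}}{\Phi_\beta}\Psi^m\,dx$, the remaining gradient cross-term being a lower-order term of the same type. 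By the two-sided bounds of Lemma \ref{lem:Phi}(iii)--(iv) (valid once $\beta+1<N-1$) one has $\frac{\Phi_{\beta+1}}{\Phi_\beta}\asymp\Psi^{-1}$, so this good term is comparable to $\int_D\frac{v^2}{|x|}\Psi^{m-1}\,dx$; taking $\beta$ as large as the constraints permit makes its coefficient dominate the bad one, and the residue, together with $\int_D v^2\Psi^{m-2}\,dx\le\int_D\frac{v^2}{|x|}\Psi^{m-1}\,dx$, is absorbed by the dissipation via Lemma \ref{lem:hardy2}.

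The forcing is then estimated by weighted Cauchy--Schwarz, $\int_D Fv\Psi^m\,dx\le(\int_D F^2|x|\Psi^{m+1}\,dx)^{1/2}(\int_D\frac{v^2}{|x|}\Psi^{m-1}\,dx)^{1/2}$, the last factor being again controlled by the good term. Integrating the resulting differential inequality on $(0,t)$ yields the pointwise bound on $\int_D\frac{v^2}{|x|}\Psi^m\,dx$ and, after recovering $\int_0^t\!\!\int_D|\nabla v|^2\Psi^m$ from $\int_0^t\!\!\int_D\Phi_\beta^2|\nabla V|^2\Psi^m$ (the ground-state remainder being nonnegative up to a Hardy term), the dissipation integral. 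Finally the initial contribution $\frac{V_0}{2}\int_D\frac{v_0^2}{|x|}(1+|x|)^m\,dx$ is bounded, via Lemma \ref{lem:hardy2} at $t=0$ followed by Lemma \ref{lem:ell} applied to $v_0=w_0+\frac{\lambda}{V_0|x|}\psi_1$, by $\int_D(|\nabla w_0|^2+w_1^2)(1+|x|)^{m+1}\,dx$, giving the asserted estimate. The recurring delicate point I expect throughout is to keep every constant independent of the shape and size of $D$, as this is exactly what the later exterior-domain passage requires.
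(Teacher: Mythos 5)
You correctly identify the obstacle (the Hardy constant of Lemma~\ref{lem:hardy2} is too weak to absorb the lower--order terms produced by differentiating $\Psi^m$ when $m$ is close to $N-1$ and $V_0$ is large), you pick the right tool ($\Phi_\beta$ from Lemma~\ref{lem:Phi}), and your treatment of existence, of the forcing term, and of the initial datum via Lemmas~\ref{lem:hardy2} and~\ref{lem:ell} all match the paper. But the step that is supposed to close the estimate is asserted rather than proved, and I do not believe it closes as stated. After the substitution $v=\Phi_\beta V$ with the test function $V\Phi_\beta^2\Psi^m$, your ``favorable'' term is $V_0\beta\int_D\frac{V^2}{|x|}\Phi_\beta\Phi_{\beta+1}\Psi^m\,dx$, and to make it dominate the bad terms (of size roughly $\frac{V_0 m}{2}\int_D\frac{v^2}{|x|}\Psi^{m-1}\,dx$ plus the gradient cross--term) you need the lower bound $\Phi_{\beta+1}\geq c\,\Psi^{-\beta-1}$ of Lemma~\ref{lem:Phi}\,{\bf(iv)}, which forces $\beta+1<N-1$, i.e.\ $\beta<N-2$. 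The lemma, however, is invoked in the paper precisely for $m\in[N-2,N-1)$, so you are stuck with $\beta<N-2\leq m$: the ``largest permitted'' $\beta$ is already smaller than $m$, and the comparison $V_0\beta\cdot\frac{c_{\Phi_{\beta+1}}}{C_{\Phi_\beta}}>\frac{V_0m}{2}+\dots$ then hinges on the unquantified ratio of the two--sided constants in Lemma~\ref{lem:Phi}\,{\bf(iii)}--{\bf(iv)}, which is not close to $1$ and is never estimated. For $N=3$ (where $m/(2\beta)\geq\frac{N-1}{2(N-2)}=1$) the domination cannot hold even with perfect constants. This is a genuine gap, not a routine omission.

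The paper sidesteps this competition of constants entirely. Instead of the product weight $\Phi_\beta^2\Psi^m$ it uses the pure power $\Phi_\beta^{-(1-2\delta)}$ with $\beta$ chosen so that $\beta(1-2\delta)=m$ and $m<\beta<N-1$: the whole polynomial weight $\Psi^m$ is encoded in $\Phi_\beta$ itself (only the lower bound for $\Phi_\beta$, valid for $\beta<N-1$, is needed), the time derivative of the weight produces exactly $-(1-2\delta)\int_D\frac{v^2\Delta\Phi_\beta}{\Phi_\beta^{2-2\delta}}\,dx$, and Lemma~\ref{lem:ibp} shows that this cancels against $2\int_D\frac{v\Delta v}{\Phi_\beta^{1-2\delta}}\,dx$ up to a signed multiple of $\int_D\frac{|\nabla v|^2}{\Phi_\beta^{1-2\delta}}\,dx$. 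No lower--order term survives; the only absorption left is the $\ep$--term coming from the forcing, which Lemma~\ref{lem:hardy2} handles for $\ep$ small, and the comparability constants of Lemma~\ref{lem:Phi} enter only multiplicatively in the final constant $C_{m,1}$. If you want to salvage your route, you should replace the ad hoc domination claim by this exact cancellation (i.e.\ prove and use an inequality of the type of Lemma~\ref{lem:ibp}), or adopt the paper's choice of weight directly.
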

\begin{proof}
Choose $\beta$ satisfying $m<\beta<N-1$ and $\beta(1-2\delta)=m$. 
Then using 
the equation in \eqref{eq:v} and 
Lemmas \ref{lem:Phi} {\bf (i)} and \ref{lem:ibp}, 
we have
\begin{align*}
\frac{d}{dt}
\int_D \frac{V_0}{|x|}\frac{v^2}{\Phi_{\beta}^{1-2\delta}}\,dx
&=
2\int_D \frac{V_0}{|x|}\frac{v\pa_tv}{\Phi_{\beta}^{1-2\delta}}\,dx
-
(1-2\delta)\int_D \frac{V_0}{|x|}\frac{v^2}{\Phi_{\beta}^{2-2\delta}}\pa_t\Phi_\beta\,dx
\\
&=
2\int_D \frac{v(\Delta v+G)}{\Phi_{\beta}^{1-2\delta}}\,dx
-
(1-2\delta)\int_D \frac{v^2}{\Phi_{\beta}^{2-2\delta}}\Delta\Phi_\beta\,dx
\\
&\leq 
-
\frac{2\delta}{1-\delta}
\int_D \frac{|\nabla v|^2}{\Phi_{\beta}^{1-2\delta}}\,dx
+
2
\left(
\int_D \frac{v^2}{|x|\Psi\Phi_{\beta}^{1-2\delta}}\,dx
\right)^{\frac{1}{2}}
\left(
\int_D \frac{G^2|x|\Psi }{\Phi_{\beta}^{1-2\delta}}\,dx
\right)^{\frac{1}{2}}
\\
&\leq 
-
\frac{2\delta}{1-\delta}
\int_D \frac{|\nabla v|^2}{\Phi_{\beta}^{1-2\delta}}\,dx
+
\ep 
\int_D \frac{v^2}{|x|\Psi\Phi_{\beta}^{1-2\delta}}\,dx
+
\frac{1}{\ep}
\int_D \frac{G^2|x|\Psi }{\Phi_{\beta}^{1-2\delta}}\,dx.
\end{align*}
Using the above estimate and integrating it over $[0,t]$, we deduce
\begin{align*}
&\int_D \frac{V_0}{|x|}\frac{v(t)^2}{\Phi_{\beta}(t)^{1-2\delta}}\,dx
+
\frac{2\delta}{1-\delta}
\int_0^t\!\!\int_D \frac{|\nabla v(s)|^2}{\Phi_{\beta}(s)^{1-2\delta}}\,dx\,ds
\\
&\leq 
\int_D \frac{V_0}{|x|}\frac{v_0^2}{\Phi_{\beta}(0)^{1-2\delta}}\,dx
+
\ep 
\int_0^t\!\!\int_D \frac{v(s)^2}{|x|\Psi(s)\Phi_{\beta}(s)^{1-2\delta}}\,dx\,ds
+
\frac{1}{\ep}
\int_0^t\!\!\int_D \frac{F^2(s)\Psi(s)|x|}{\Phi_{\beta}(s)^{1-2\delta}}\,dx\,ds
\end{align*}
and therefore,  
it follows from Lemma \ref{lem:Phi} {\bf (iii)}, {\bf (iv)} 
that
\begin{align*}
&V_0\int_D v(t)^2\frac{\Psi(t)^m}{|x|}\,dx
+
\frac{2\delta}{1-\delta}
\int_0^t\!\!\int_D |\nabla v(s)|^2\Psi(s)^m\,dx\,ds
\\
&\leq 
C\left(
\int_D v_0^2\frac{\Psi(0)^m}{|x|}\,dx
+
\ep 
\int_0^t\!\!\int_D v(s)^2\frac{\Psi(s)^{m-1}}{|x|}\,dx\,ds
+
\frac{1}{\ep}
\int_0^t\!\!\int_D F^2(s)\Psi(s)^{m+1}|x|\,dx\,ds
\right),
\end{align*}
where $C=(C_{\Phi_\beta}'/c_{\Phi_\beta}')^{1-2\delta}$
with 
$C_{\Phi_\beta}'=C_{\Phi_\beta}\max\{1,V_0\}$
and 
$c_{\Phi_\beta}'=c_{\Phi_\beta}\min\{1,V_0\}$.
Noting that by Lemma \ref{lem:hardy2}
\begin{align*}
\int_D v_0^2\frac{\Psi(0)^m}{|x|}\,dx
&\leq 
\min\left\{\frac{N-1}{2},\frac{N-1+m}{2}\right\}^{-2}\int_D |\nabla v_0|^2\Psi(0)^{m+1}\,dx
\\
\int_0^t\!\!\int_D v(s)^2\frac{\Psi(s)^m}{|x|}\,dx\,ds
&\leq 
\min\left\{\frac{N-1}{2},\frac{N-2+m}{2}\right\}^{-2}
\int_0^t\!\!\int_D |\nabla v(s)|^2\Psi(s)^{m}\,dx\,ds, 
\end{align*}
applying Lemma \ref{lem:ell} with $\mu=-1,m$, 
and choosing 
$\ep$ small enough, we obtain the desired estimate.
\end{proof}

Next lemma gives a weighted estimate for $\pa_tv$ and $\nabla v$. 

\begin{lemma}\label{lem:est-v2}
Assume that $(w_0,w_1)\in H^1_0(D)\times L^2(D)$ and $F\in C([0,T);L^2(D))$.
Let $v$ be the solution of \eqref{eq:v}. 
Then 
\begin{align*}
&\int_D |\nabla v(t)|^2\Psi(t)^{m+1}\,dx
+
\int_0^t\int_D \frac{|\pa_tv(s)|^2}{|x|}\Psi(s)^{m+1}\,dx\,dt
\\
&\leq 
C_{m,2}'
\left(
\int_D \Big(|\nabla w_0|^2+w_1^2\Big)(1+|x|)^{m+1}\,dx
+
\int_0^t\int_D F(s)^2\Psi(s)^{m+1}|x|\,dx\,ds\right)
\end{align*}
for some positive constant $C_{m,2}'$ depending only on $N$, $m$ and $V_0$.
\end{lemma}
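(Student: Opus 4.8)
The plan is to derive a weighted parabolic energy identity by testing the equation in \eqref{eq:v} against $\pa_tv$ with the space--time weight $\Psi^{m+1}$, and then to close the resulting inequality by feeding in the bound of Lemma \ref{lem:est-v1} for the lower-order gradient term and the bound of Lemma \ref{lem:ell} for the initial data. First I would multiply $\frac{V_0}{|x|}\pa_tv-\Delta v=F$ by $(\pa_tv)\Psi^{m+1}$ and integrate over $D$. Since $v=0$ on $\pa D$ (hence $\pa_tv=0$ there), integration by parts on the $\Delta v$ term produces no boundary contribution, and using $\pa_t\Psi=1$, $\nabla\Psi=\frac{x}{|x|}$ one obtains the identity
\begin{align*}
&\int_D\frac{V_0}{|x|}(\pa_tv)^2\Psi^{m+1}\,dx
+\frac{1}{2}\frac{d}{dt}\int_D|\nabla v|^2\Psi^{m+1}\,dx
\\
&=\frac{m+1}{2}\int_D|\nabla v|^2\Psi^{m}\,dx
-(m+1)\int_D\Big(\nabla v\cdot\frac{x}{|x|}\Big)(\pa_tv)\Psi^{m}\,dx
+\int_D F(\pa_tv)\Psi^{m+1}\,dx.
\end{align*}

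Next I would handle the two indefinite terms on the right by the Young inequality, with weights chosen so that the time-derivative contribution carries exactly the factor $|x|^{-1}\Psi^{m+1}$ of the damping term on the left. Bounding $|\nabla v\cdot\frac{x}{|x|}|\leq|\nabla v|$ and writing the cross term as a product of $|x|^{1/2}|\nabla v|\Psi^{(m-1)/2}$ and $|x|^{-1/2}|\pa_tv|\Psi^{(m+1)/2}$, and using $|x|\leq\Psi$ so that $|x|\Psi^{m-1}\leq\Psi^m$, gives for any $\ep>0$
\[
(m+1)|\nabla v|\,|\pa_tv|\,\Psi^{m}
\leq\frac{\ep}{2}\frac{(\pa_tv)^2}{|x|}\Psi^{m+1}
+\frac{(m+1)^2}{2\ep}|\nabla v|^2\Psi^{m},
\]
and similarly the forcing term splits as $\int_D F(\pa_tv)\Psi^{m+1}\,dx\leq\frac{\ep}{2}\int_D\frac{(\pa_tv)^2}{|x|}\Psi^{m+1}\,dx+\frac{1}{2\ep}\int_D F^2|x|\Psi^{m+1}\,dx$. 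Choosing $\ep<V_0$ lets me absorb both $\frac{(\pa_tv)^2}{|x|}\Psi^{m+1}$ contributions into the damping term, leaving
\[
c_1\int_D\frac{(\pa_tv)^2}{|x|}\Psi^{m+1}\,dx
+\frac{1}{2}\frac{d}{dt}\int_D|\nabla v|^2\Psi^{m+1}\,dx
\leq C\int_D|\nabla v|^2\Psi^{m}\,dx
+C\int_D F^2|x|\Psi^{m+1}\,dx
\]
for suitable constants $c_1,C>0$ depending only on $N,m,V_0$.

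Finally I would integrate this in time over $[0,t]$. The initial gradient energy $\int_D|\nabla v_0|^2(1+|x|)^{m+1}\,dx$ is controlled by Lemma \ref{lem:ell} applied with $\mu=-1$ and $\mu=m$ (both admissible since $\lambda=\max_{-1\leq\mu\leq m}\lambda_\mu$), together with $(1+|x|)^{m+1}\leq 2^{m+1}(1+|x|^{m+1})$; the forcing integral $\int_0^t\int_D F^2|x|\Psi^{m+1}\,dx\,ds$ matches the right-hand side of the claim directly; and the only remaining term, $\int_0^t\int_D|\nabla v|^2\Psi^{m}\,dx\,ds$, is precisely the quantity already estimated in Lemma \ref{lem:est-v1}. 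Combining these three inputs yields the asserted bound. I expect the main obstacle to be exactly this weight bookkeeping: the estimate closes only because the damping furnishes the $|x|^{-1}\Psi^{m+1}$-weighted dissipation needed to absorb the cross and forcing terms, while the leftover gradient term is pushed down one power of $\Psi$ to level $\Psi^{m}$, where it is controlled by the previous lemma; had the Young splitting produced a $\Psi^{m+1}$-weighted gradient term on the right, the argument would not close without an extra Gr\"onwall step. As in Proposition \ref{prop:3-1}, the integrations by parts are justified on regularized data (via the resolvents $J_n$) and the estimate is then passed to the limit.
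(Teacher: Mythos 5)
Your proposal is correct and follows essentially the same route as the paper: the paper likewise differentiates $\int_D|\nabla v|^2\Psi^{m+1}\,dx$ (equivalently, tests the equation against $\pa_tv\,\Psi^{m+1}$), substitutes $\Delta v=\frac{V_0}{|x|}\pa_tv-F$, and absorbs the cross and forcing terms into the damping dissipation via Young's inequality with exactly the weight bookkeeping $|x|\Psi^{m}\leq\Psi^{m+1}$ that you describe. Your closing step --- integrating in time and invoking Lemma \ref{lem:est-v1} for $\int_0^t\int_D|\nabla v|^2\Psi^m\,dx\,ds$ and Lemma \ref{lem:ell} for the initial gradient energy --- is what the paper leaves implicit in ``combining the above three estimates,'' so you have merely made the same argument more explicit.
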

\begin{proof}
Using the equation in \eqref{eq:v}, we have
\begin{align*}
\frac{d}{dt}
\int_D |\nabla v|^2\Psi^{m+1}\,dx
&=
2\int_D \nabla \pa_tv\cdot\nabla v\Psi^{m+1}\,dx
+
(m+1)\int_D |\nabla v|^2\Psi^{m}\,dx
\\
&=
-
2\int_D \pa_tv\Delta v\Psi^{m+1}\,dx
-2(m+1)\int_D \pa_tv\nabla v\cdot\frac{x}{|x|}\Psi^{m}\,dx
\\
&\quad +
(m+1)\int_D |\nabla v|^2\Psi^{m}\,dx
\\
&\leq 
-2V_0\int_D \frac{(\pa_tv)^2}{|x|}\Psi^{m+1}\,dx
+
(m+1)\int_D |\nabla v|^2\Psi^{m}\,dx
\\
&\quad
+2(m+1)\int_D |\pa_tv||\nabla v|\Psi^{m}\,dx
+2\int_D \pa_tv F\Psi^{m+1}\,dx.
\end{align*}
By the Young inequality, we have
\begin{align*}
(m+1)\int_D |\pa_tv||\nabla v|\Psi^{m}\,dx
&\leq 
\frac{V_0}{2}\int_D (\pa_tv)^2\Psi^{m}\,dx
+
\frac{2(m+1)^2}{V_0}\int_D |\nabla v|^2\Psi^{m}\,dx
\\
&\leq 
\frac{V_0}{2}\int_D (\pa_tv)^2\frac{\Psi^{m+1}}{|x|}\,dx
+
\frac{2(m+1)^2}{V_0}\int_D |\nabla v|^2\Psi^{m}\,dx
\end{align*}
and 
\begin{align*}
2\int_D \pa_tv F\Psi^{m+1}\,dx\leq 
\frac{V_0}{2}\int_D (\pa_tv)^2\frac{\Psi^{m+1}}{|x|}\,dx
+\frac{2}{V_0}\int_D F^2\Psi^{m+1}|x|\,dx. 
\end{align*}
Combining the above three estimates, 
we obtain the desired estimate.
\end{proof}

The following is the reason why we introduce 
the problems \eqref{eq:psi1}--\eqref{eq:U}. 
The solution $w$ of \eqref{AP} can be represented by 
the sum of two parts which are the solutions of \eqref{eq:v} and \eqref{eq:U}. 
The idea of this kind of decomposition for the abstract evolution equations is introduced in Sobajima \cite{Soba_AE} and Ikehata--Sobajima \cite{IkeSob_SLP}. 
\begin{lemma}\label{lem:decomp}
Assume that $(w_0,w_1)\in H^1_0(D)\times L^2(D)$ and $F\in C([0,T);L^2(D))$.
Let $w$, $v$ and $U$ be the solutions of \eqref{AP}, \eqref{eq:v} and \eqref{eq:U}, respectively. Then $w=v+\pa_tU$. 
\end{lemma}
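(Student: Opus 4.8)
The plan is to introduce $\tilde w:=v+\pa_tU$ and to verify that $\tilde w$ solves the linear damped-wave problem \eqref{AP} with exactly the data $(w_0,w_1)$ and source $F$ that define $w$. Since \eqref{AP} on the bounded domain $D$ is a damped wave equation governed by a $C_0$-semigroup and hence has a unique solution, this identification forces $w=\tilde w$. Because all the manipulations below are differentiations in $t$ and evaluations at $t=0$, I would first carry them out at the regularized level supplied by the $J_n$-approximation recorded after Proposition~\ref{prop:3-1}, where every term is classical, and then pass to the limit.

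The key algebraic observation is that the operator $L:=\pa_t^2-\Delta+\frac{V_0}{|x|}\pa_t$ has time-independent coefficients, so $\pa_t$ commutes with $L$. Hence from the equation in \eqref{eq:U},
\[
L(\pa_tU)=\pa_t(LU)=\pa_t(-\pa_tv)=-\pa_t^2v,
\]
while the parabolic equation in \eqref{eq:v}, namely $-\Delta v+\frac{V_0}{|x|}\pa_tv=F$, gives $Lv=\pa_t^2v+F$. Adding these yields $L\tilde w=Lv+L(\pa_tU)=F$, so $\tilde w$ solves the same PDE as $w$. The boundary condition is immediate: $v=0$ and $U=0$ on $\pa D$ for all $t$, so $\pa_tU=0$ there as well, whence $\tilde w=0$ on $\pa D$.

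It remains to match the initial data. For the position, $\tilde w(0)=v(0)+\pa_tU(0)=\big(w_0+\frac{\lambda}{V_0|x|}\psi_1\big)-\frac{\lambda}{V_0|x|}\psi_1=w_0$. For the velocity I read $\pa_t^2U(0)$ off the equation in \eqref{eq:U} at $t=0$: using $U(0)=-\psi_1$ and $\pa_tU(0)=-\frac{\lambda}{V_0|x|}\psi_1$,
\[
\pa_t^2U(0)=\Delta U(0)-\tfrac{V_0}{|x|}\pa_tU(0)-\pa_tv(0)
=\Big(\tfrac{\lambda}{|x|^2}\psi_1-\Delta\psi_1\Big)-\pa_tv(0)=w_1-\pa_tv(0),
\]
where the last equality is precisely the defining equation \eqref{eq:psi1} for $\psi_1$. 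Therefore $\pa_t\tilde w(0)=\pa_tv(0)+\pa_t^2U(0)=w_1$, and both initial conditions agree with those of $w$.

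The analytic content, rather than the formal cancellation, is the main obstacle. One must ensure that $\pa_tv\in C([0,T);L^2(D))$ so that \eqref{eq:U} is a legitimate inhomogeneous damped-wave problem with this source, that $\pa_tU$ lies in the energy class $C([0,T);H^1_0(D))\cap C^1([0,T);L^2(D))$ so $\tilde w$ is an admissible solution of \eqref{AP}, and that the zero-order term $\frac{\lambda}{V_0|x|}\psi_1$ entering both $v_0$ and $\pa_tU(0)$ is well-defined in $H^1_0(D)$ (which holds since $0\notin\overline D$ makes $|x|^{-1}$ bounded on $D$ and $\psi_1\in H^2(D)\cap H^1_0(D)$ by Lemma~\ref{lem:ell}). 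These are exactly the points the $J_n$-smoothing resolves: one performs the identities above for smooth data, where $\pa_t$ commutes with $L$ and $\pa_t^2U(0)$ is evaluated classically, and then lets $n\to\infty$ to conclude $w=v+\pa_tU$.
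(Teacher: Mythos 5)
Your overall strategy coincides with the paper's: set $\tilde w=v+\pa_tU$, check that it solves \eqref{AP} with data $(w_0,w_1)$ and source $F$, and invoke uniqueness. The formal algebra (the cancellation of $\pa_t^2v$ between $Lv=\pa_t^2v+F$ and $L(\pa_tU)=-\pa_t^2v$, and the computation $\pa_t^2U(0)=w_1-\pa_tv(0)$) is correct. However, there is a genuine gap in how you propose to justify it. The commutation $L(\pa_tU)=\pa_t(LU)=-\pa_t^2v$ requires differentiating the equation for $U$ in time, hence requires $\pa_tv$ to be differentiable in $t$ with values in $L^2(D)$; likewise $Lv=\pa_t^2v+F$ and the intermediate identity $\pa_t\tilde w(0)=\pa_tv(0)+\pa_t^2U(0)$ require $\pa_t^2v$ and $\pa_tv(0)$ individually. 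None of these objects exist under the stated hypotheses: $F$ is only continuous in $t$ with values in $L^2(D)$, so the parabolic solution $v$ has no second time derivative in general, and since $v(0)=w_0+\frac{\lambda}{V_0|x|}\psi_1$ is merely in $H^1_0(D)$, even $\pa_tv(0)=\frac{|x|}{V_0}(\Delta v(0)+F(0))$ need not lie in $L^2(D)$. The $J_n=(1-\frac1n\Delta)^{-1}$ regularization you invoke is purely spatial and cannot manufacture this temporal regularity, so "the $J_n$-smoothing resolves exactly these points" is not accurate for the steps where your argument actually needs it.

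The paper avoids the problem by never letting $\pa_t^2v$ or $\pa_tv(0)$ appear. It mollifies only the initial data of the $U$-problem, sets $w_n=v+\pa_tU_n$, and substitutes the equation for $U_n$ \emph{before} differentiating again:
\begin{align*}
\pa_tw_n=\pa_tv+\pa_t^2U_n=\Delta U_n-\frac{V_0}{|x|}\pa_tU_n ,
\end{align*}
so the source $-\pa_tv$ cancels the $\pa_tv$ coming from $v$ at first order. All subsequent manipulations involve only $\Delta\pa_tU_n$ and $\pa_t^2U_n$, which are controlled because $U_n$ is a strong solution with $(H^2\cap H^1_0)\times H^1_0$ data, and the initial velocity is read off as $\pa_tw_n(0)=-J_n\Delta\psi_1+\frac{\lambda}{|x|}J_n(\frac{\psi_1}{|x|})\to w_1$ via the elliptic equation \eqref{eq:psi1}, with no reference to $\pa_tv(0)$. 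You could repair your argument by additionally mollifying $F$ in time and passing to the limit, but the cleaner fix is to adopt this first-order substitution, which is the actual content of the paper's proof.
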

\begin{proof}
Observe that ${}^t(-\psi_1,-\frac{\lambda}{V_0|x|}\psi_1)
\in (H^2(D)\cap H^1_0(D)) \times H^1_0(\Omega)$. 
Let $U_n$ be a solution of 
\begin{equation}\label{eq:U_n}
\begin{cases}
\pa_t^2U_n-\Delta U_n+\dfrac{V_0}{|x|}\pa_tU_n=-\pa_tv
\quad \text{in}\ D\times(0,T),
\\
U_n(x,t)=0
\quad \text{on}\ \pa D\times(0,T),
\\
(U_n,\pa_tU_n)(0)=(-J_n\psi_1,-J_n(\frac{\lambda}{V_0|x|}\psi_1)),
\end{cases}
\end{equation}
where $J_n=(1-n^{-1}\Delta)^{-1}$. 
Then we have $\Delta U_n\in C^1([0,T);L^2(D))$
and 
\[
{}^t(U_n(t),\pa_tU_n(t))\to {}^t(U(t),\pa_tU(t))  \text{in}\ (H^2(D)\cap H^1_0(D))\times H^1_0(D)
\]
as $n\to \infty$. 

Put $w_n=v+\pa_tU_n$, and therefore, 
we have $w_n(t)\to v(t)+\pa_tU(t)$ 
in $H^1_0(D)$ as $n\to \infty$. 
On the other hand,  we see from \eqref{eq:U_n} that 
\[
\pa_tw_n=\pa_tv+\pa_t^2U_n=
\Delta U_n-\frac{V_0}{|x|}\pa_tU_n
=\Delta U_n-\frac{V_0}{|x|}(w_n-v).
\]
This gives $w_n(0)=w_0+U_1-J_nU_1\to w_0$  
in $H^1_0(D)$
and 
$\pa_tw(0)
=-J_n\Delta \psi_1+\frac{\lambda}{|x|}J_n(\frac{\psi_1}{|x|})
\to
-\Delta \psi_1+\frac{\lambda}{|x|^2}\psi_1
= w_1$ in $L^2(D)$ as $n\to \infty$. 
Moreover, we have
\[
\pa_t^2w_n
+
\frac{V_0}{|x|}
\pa_tw_n=
\Delta\pa_tU_n+\frac{V_0}{|x|}\pa_tv
=\Delta\pa_tU_n+\Delta v+F
=\Delta w_n+F. 
\]
The uniqueness of solutions for the homogeneous problem of \eqref{eq:U_n}
implies 
$w_n(t)\to w(t)$ in $H^1_0(D)$ as $n\to \infty$. 
The proof 
is complete. 
\end{proof}
Using the property of $U$, now we prove the weighted $L^2$-estimates for the solution $w$ of \eqref{AP}.  
\begin{proposition}\label{prop:est-w}
Assume that $(w_0,w_1)\in H^1_0(D)\times L^2(D)$ and $F\in C([0,T);L^2(D))$.
Let $w$ be the solution of \eqref{AP}. Then
\begin{align*}
&
\int_D w(t)^2|x|^{m-1}\,dx
+
\int_0^t 
\int_D w(s)^2|x|^{m-2}\,dx
\,ds
\\
&\leq 
C_{m,3}'
\left(
\int_D \Big(|\nabla w_0|^2+w_1^2\Big)(1+|x|)^{m+1}\,dx
+
\int_0^t
\int_D F(s)^2\Psi^{m+1}(s)|x|\,dx\,ds
\right).
\end{align*}
for some positive constant $C_{m,3}'$ depending only on $N$, $m$ and $V_0$.
\end{proposition}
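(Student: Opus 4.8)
The plan is to run everything through the decomposition $w=v+\partial_t U$ provided by Lemma \ref{lem:decomp}. Using $(a+b)^2\le 2a^2+2b^2$, the proposition reduces to bounding $\int_D v^2|x|^{m-1}\,dx$ and $\int_0^t\!\int_D v^2|x|^{m-2}\,dx\,ds$, together with the same two quantities with $v$ replaced by $\partial_t U$. The data $(w_0,w_1)$ and the forcing $F$ then enter only through the already-proven estimates for $v$ and $\partial_t v$ (Lemmas \ref{lem:est-v1} and \ref{lem:est-v2}) and for $\psi_1$ (Lemma \ref{lem:ell}), so the whole argument becomes a bookkeeping of weights resting on those three lemmas.

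First I would dispose of the parabolic part $v$. Since $m\ge N-2\ge 1$ and $|x|\le\Psi$, one has $|x|^{m}\le\Psi^{m}$ and $|x|^{m-1}\le\Psi^{m-1}$, so that
\[
\int_D v^2|x|^{m-1}\,dx\le\int_D\frac{v^2}{|x|}\Psi^{m}\,dx,\qquad
\int_D v^2|x|^{m-2}\,dx\le\int_D\frac{v^2}{|x|}\Psi^{m-1}\,dx.
\]
The pointwise term is then controlled directly by Lemma \ref{lem:est-v1}, and the space-time term follows by first applying the weighted Hardy inequality Lemma \ref{lem:hardy2} (which bounds $\int_D v^2\Psi^{m-1}/|x|\,dx$ by $\int_D|\nabla v|^2\Psi^{m}\,dx$) and then invoking the dissipation bound of Lemma \ref{lem:est-v1}. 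Both are dominated by the right-hand side claimed in the proposition.

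The substantial part is the wave correction $\partial_t U$, for which I would run the weighted energy method of Lemmas \ref{lem:energy1}--\ref{lem:energy3} on the damped wave equation \eqref{eq:U}, with forcing $-\partial_t v$ and data $(-\psi_1,-\tfrac{\lambda}{V_0|x|}\psi_1)$, at the shifted exponent $\mu=m-2$. The energy $E^{\sharp}_{m-1}(U)$ is designed to deliver the pointwise quantity $\int_D(\partial_t U)^2|x|^{m-1}\,dx$, and its dissipation $E^{\sharp}_{m-2}(U)$ to deliver the space-time quantity $\int_0^t\!\int_D(\partial_t U)^2|x|^{m-2}\,dx\,ds$. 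The data term $E^{\sharp}_{m-1}(U;0)$ reduces to $\int_D|\nabla\psi_1|^2|x|^{m-1}\,dx+\tfrac{\lambda^2}{V_0^2}\int_D\psi_1^2|x|^{m-3}\,dx$, which Lemma \ref{lem:ell} bounds by $\int_D w_1^2|x|^{m+1}\,dx$ because $\lambda\ge\lambda_m$ by the choice of $\lambda$. The forcing contributions $\int_0^t\!\int_D\partial_t U\,\partial_t v\,|x|^{m-1}\,dx\,ds$ and $\int_0^t\!\int_D U\,\partial_t v\,|x|^{m-2}\,dx\,ds$ are split by Young's inequality so that the $\partial_t U$ factor, and (after the Hardy inequality Lemma \ref{lem:hardy}) the $U$ factor, are absorbed into the small dissipation, while in each case the surviving $\partial_t v$ factor is organized into the weight $\int_0^t\!\int_D(\partial_t v)^2\Psi^{m+1}/|x|\,dx\,ds$ supplied by Lemma \ref{lem:est-v2}. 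Summing the $v$- and $\partial_t U$-contributions then gives the assertion.

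The obstacle I anticipate is concentrated entirely in this last step and is twofold: matching the forcing weights exactly (so that after Young's inequality the residual $\partial_t v$ factor lands on $(\partial_t v)^2|x|^{m}\le (\partial_t v)^2\Psi^{m+1}/|x|$, precisely the quantity controlled by Lemma \ref{lem:est-v2}), and handling the borderline exponents. The second point is genuinely delicate because at $\mu=m-2$ the weight can sit at or below the Hardy threshold $2-N$ in low dimension (for instance $N=3$ with $m$ near $N-2$), where the coefficient $(N-2)^2-\mu^2$ in Lemma \ref{lem:energy3} degenerates and, more seriously, the admissible range $3-N<\mu$ of the equivalence Lemma \ref{lem:energy0} is violated, so that $E^{\sharp}_{m-1}(U)$ need no longer be comparable to $E_{m-1}(U)$. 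To extract the pointwise $\int_D(\partial_t U)^2|x|^{m-1}\,dx$ in that regime I would not rely on the full equivalence but rather work with $\widetilde{E}_{m-1}(U)$, which stays comparable to $E_{m-1}(U)$ down to $\mu>1-N$ using only Lemma \ref{lem:hardy} on the single $U^2$ term, and control the remaining $E^{\ast}$ part of $E^{\sharp}$ separately via Young's inequality with a small constant, keeping the nonnegative lower-order dissipation $-\tfrac{[(N-2)^2-\mu^2]V_0}{4}\int_D U^2|x|^{\mu-2}\,dx$ explicit throughout.
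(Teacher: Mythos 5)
Your proposal is correct and follows essentially the same route as the paper: decompose $w=v+\partial_tU$ via Lemma \ref{lem:decomp}, control the $v$-part through Lemmas \ref{lem:hardy}, \ref{lem:est-v1} and \ref{lem:est-v2}, and run the $E^{\sharp}$-energy method of Lemma \ref{lem:energy3} at $\mu=m-2$ on \eqref{eq:U}, with Lemma \ref{lem:ell} handling the data $(-\psi_1,-\tfrac{\lambda}{V_0|x|}\psi_1)$ and Lemma \ref{lem:est-v2} the forcing $-\partial_tv$. Your closing remark about the borderline exponent is well taken --- the paper simply invokes Lemma \ref{lem:energy0} at $\mu=m-2$, which for $N=3$ and $m$ near $N-2$ sits at or below the stated threshold $3-N$ --- but that is a refinement of the same argument rather than a different one.
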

\begin{proof}
First we note that by 
Lemmas \eqref{lem:hardy}, \eqref{lem:est-v1} and \ref{lem:est-v2}, we already have
\begin{align}
\nonumber
&\int_D v(t)^{2}|x|^{m-1}\,dx
+
\int_0^t\!\!\int_D v(s)^{2}|x|^{m-2}\,dx\,ds
\\
\nonumber
&\leq \left(\frac{N-1+m}{2}\right)^2\int_D |\nabla v(t)|^{2}|x|^{m+1}\,dx
+
\left(\frac{N-2+m}{2}\right)^2\int_0^t\!\!\int_D |\nabla v)(s)|^{2}|x|^{m}\,dx\,ds
\\
\nonumber
&\leq \left(\frac{N-2+m}{2}\right)^2\int_D |\nabla v(t)|^{2}\Psi(t)^{m+1}\,dx
+
\left(\frac{N-1+m}{2}\right)^2\int_0^t\!\!\int_D |\nabla v(s)|^{2}\Psi(s)^{m}\,dx\,ds
\\
\label{est:v}
&\leq C\left(
\int_D \Big(w_0^2+w_1^2\Big)(1+|x|)^{m+1}\,dx
+
\int_0^t\int_D F(s)^2\Psi(s)^{m+1}|x|\,dx\,ds\right)
\end{align}
for some positive constant $C$. 
Applying Lemma \ref{lem:energy3} with $\mu=m-2\in (-1,N-2)$ 
to the solution $U$ of \eqref{eq:U}, we have 
\begin{align*}
&E_{m-1}^\sharp(U;t)
+
\delta_{\mu-2}\int_0^t E_{m-2}^\sharp(U;s)\,ds
\\
&\leq 
C_{\mu-1}'
\left(
\int_D\Big(|\nabla U(0)|^2+\big(\pa_tU(0)\big)^2\Big)|x|^{m-1}\,dx
+
\int_0^t
\int_D |\pa_tv(s)|^2|x|^{m}\,dx\,ds
\right)
\\
&\leq 
C_{\mu-1}'
\left(
\int_D |\nabla \psi_1|^2|x|^{m-1}\,dx
+
\frac{\lambda^2}{V_0^2}
\int_D \psi_1^2|x|^{m-3}\,dx
+
\int_0^t
\int_D |\pa_tv(s)|^2\frac{\Psi(s)^{m+1}}{|x|}\,dx\,ds
\right).
\end{align*}
Applying Lemmas \ref{lem:ell}, \ref{lem:est-v2} and \ref{lem:energy0} 
provides
\begin{align*}
&E_{m-1}(U;t)
+
\delta_{\mu-2}\int_0^t E_{m-2}(U;s)\,ds
\\
&\leq 
C'
\left(
\int_D \Big(|\nabla w_0|^2+w_1^2\Big)|x|(1+|x|)^m\,dx
+
\int_0^t\int_D F(s)^2\Psi(s)^{m+1}|x|\,dx\,ds
\right)
\end{align*}
for some positive constant $C'$. 
Noting that $w=v+\pa_tU$ and 
\[
E_{m-1}(U;t)\geq \int (\pa_tU)^2|x|^{m-1}\,dx,
\] 
we obtain the desired inequality. 
\end{proof}

At the end of this subsection, 
we prove Proposition \ref{prop:3-1} in the rest case. 

\begin{proof}[Proof of Proposition \ref{prop:3-1} when $m\in [N-2,N-1)$]
In view of Lemma \ref{lem:energy3} for $\mu=m\geq N-2$, we already have
\begin{align*}
\frac{d}{dt}E_{m+1}^{\sharp}(w;t)
+
\delta_m' E_{m}^{\sharp}(w;t)
&\leq 
C_{m+1}'
\left(
\int_D w^2|x|^{m-2}\,dx+
\int_D F^2|x|^{m+2}\,dx\right)
\\
&\leq 
C_{m+1}'
\left(
\int_D w^2|x|^{m-2}\,dx+
\int_D F^2\Psi^{m+1}|x|\,dx\right).
\end{align*}
By virtue of Proposition \ref{prop:est-w}, 
integrating the above inequality on $[0,t]$
and using Lemma \ref{lem:energy0}, we deduce
\[
E_{m+1}^\sharp(w;t)
+
\delta_m'\int_0^tE_{m}^\sharp(w;s)\,ds
\leq 
C
\left(
E_{m+1}^\Psi(w;0)
+\int_0^t\!\!\int_D F(s)^2\Psi(s)^{m+1}|x|\,dx\,ds\right)
\]
for some positive constant $C$. 
Then noting that $m+1<V_0$ yields 
the validity of  assumption of Lemma \ref{lem:energy0} 
and proceeding the same argument as in the case of $0<m<N-2$, 
we can obtain the desired estimate for $N-2\leq m<N-1$. 
The proof is complete.
\end{proof}

\section{The semilinear problem in exterior domains}\label{sec:GE}

To prove global existence for the semilinear problem \eqref{P}, 
we use the following lemma which is so-called blowup alternative. 
\begin{lemma}\label{lem:alternative}
Assume that ${}^t(u_0,u_1)\in H^1_0(\Omega)\times L^2(\Omega)$. 
Let $u$ be the weak solution of \eqref{P} in $(0,T_*)$ 
with the corresponding lifespan $T_{\max}$, that is, 
\[
T_{\max}=\sup\{T\in (0,\infty]\;;\;\text{there exists a weak solution of \eqref{P} in $(0,T)$}\}.
\]
If $T_{\max}<\infty$, then 
$\lim\limits_{t\nearrow T_{\max}}\big(\|u(t)\|_{H^1_0(\Omega)}+\|\pa_tu(t)\|_{L^2(\Omega)}\big)=\infty$.  
\end{lemma}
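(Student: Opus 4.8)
The plan is to argue by contradiction through the standard continuation principle, so the whole matter reduces to a \emph{quantitative} form of the local existence already invoked after Proposition~\ref{prop:localsol}. Concretely, suppose $T_{\max}<\infty$ but the conclusion fails; then there exist a constant $M>0$ and a sequence $t_n\nearrow T_{\max}$ with
\[
\|u(t_n)\|_{H^1_0(\Omega)}+\|\pa_tu(t_n)\|_{L^2(\Omega)}\leq M
\quad\text{for all }n.
\]
The key claim I would isolate is that the lifespan furnished by local existence can be bounded below purely in terms of the size of the data: there is a nonincreasing function $\tau:(0,\infty)\to(0,\infty)$ such that, for every ${}^t(\phi_0,\phi_1)\in\mathcal{H}=H^1_0(\Omega)\times L^2(\Omega)$ with $\|{}^t(\phi_0,\phi_1)\|_{\mathcal{H}}\leq M$, the problem \eqref{P} with these data admits a unique weak solution at least on $(0,\tau(M))$. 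Granting this, the contradiction is immediate: since \eqref{P} is autonomous in $t$, restarting it at time $t_n$ with initial data ${}^t(u(t_n),\pa_tu(t_n))$ produces a weak solution on $(t_n,t_n+\tau(M))$, which by the uniqueness in Proposition~\ref{prop:localsol} coincides with $u$ on the overlap and hence extends $u$ to $(0,t_n+\tau(M))$. Choosing $n$ so large that $t_n>T_{\max}-\tau(M)$ yields a weak solution on an interval strictly longer than $(0,T_{\max})$, contradicting the definition of $T_{\max}$.

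It remains to justify the uniform lower bound $\tau(M)$, which I regard as the only substantial point. Writing $\mathcal{U}_0={}^t(\phi_0,\phi_1)$, a weak solution on $(0,T)$ is a fixed point of the Duhamel map
\[
\Gamma[\mathcal{U}](t)=e^{-t\mathcal{A}}\mathcal{U}_0+\int_0^t e^{-(t-s)\mathcal{A}}\big[\mathcal{N}(\mathcal{U}(s))\big]\,ds
\]
on $C([0,T];\mathcal{H})$. Since $\{e^{-t\mathcal{A}}\}_{t\geq0}$ is a $C_0$-semigroup, $\|e^{-t\mathcal{A}}\|_{\mathcal{H}\to\mathcal{H}}\leq C_0$ on $[0,1]$, say. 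The nonlinearity $\mathcal{N}({}^t(u,v))={}^t(0,|u|^{p-1}u)$ is locally Lipschitz from $\mathcal{H}$ into $\mathcal{H}$: from the elementary bound $\big||a|^{p-1}a-|b|^{p-1}b\big|\leq C\big(|a|^{p-1}+|b|^{p-1}\big)|a-b|$, Hölder's inequality, and the Sobolev embedding $H^1_0(\Omega)\hookrightarrow L^{2p}(\Omega)$—valid because $2\leq 2p\leq\frac{2N}{N-2}$ under the hypothesis $1<p<\frac{N}{N-2}$—one obtains
\[
\big\||u_1|^{p-1}u_1-|u_2|^{p-1}u_2\big\|_{L^2(\Omega)}
\leq C\big(\|u_1\|_{H^1_0}^{p-1}+\|u_2\|_{H^1_0}^{p-1}\big)\|u_1-u_2\|_{H^1_0}.
\]
Consequently $\Gamma$ maps the closed ball of radius $2C_0M$ in $C([0,T];\mathcal{H})$ into itself and is a contraction there, provided $T$ stays below a threshold depending only on $C_0$, $M$ and the Lipschitz constant on that ball; this threshold is exactly the desired $\tau(M)$, and it is nonincreasing in $M$.

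The only delicate issue to watch is the interplay between $\mathcal{H}$ and the auxiliary space $C([0,T];L^{2p}(\Omega))$ appearing in the definition of weak solution. This is harmless here: the same embedding $H^1_0(\Omega)\hookrightarrow L^{2p}(\Omega)$ shows that the $L^{2p}$-regularity is subsumed by membership in $C([0,T];H^1_0(\Omega))$, so the fixed point obtained in $C([0,T];\mathcal{H})$ is automatically a weak solution in the stated sense, and the contraction threshold is genuinely controlled by $\|\mathcal{U}_0\|_{\mathcal{H}}$ alone rather than by any finer norm of the data. With the uniform lifespan $\tau(M)$ in hand, the continuation argument of the first paragraph closes the proof.
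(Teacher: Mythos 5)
Your argument is correct: it is the standard continuation principle, resting on a uniform lower bound $\tau(M)$ for the local existence time via the contraction mapping for the Duhamel formulation, with the Sobolev embedding $H^1_0(\Omega)\hookrightarrow L^{2p}(\Omega)$ (valid since $p<\frac{N}{N-2}$) supplying the local Lipschitz property of $\mathcal{N}$. The paper gives no proof of this lemma, treating it as part of the standard semigroup-based local well-posedness theory invoked around Proposition~\ref{prop:localsol}, and your proposal is exactly the argument that is being left implicit there.
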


\begin{proof}[Proof of Theorem \ref{thm:main}]
The proof is divided into two steps
which are 
for the case of compactly supported initial data
and 
the one of initial data with non-compact supports. 

{\bf Step 1 (compactly supported initial data).}  
Let $u$ be a weak solution of \eqref{P} in $(0,T)$ 
and set ${\rm supp}\,u_0\cup {\rm supp}\,u_1\subset \overline{B(0,R_0)}$
for some $R_0>R_\Omega$. 
Choose $D=\Omega\cap B(0,R_0+T)$. 
Since the problem \eqref{P} has the finite propagation property,   
$u_D=u|_D$ (the restriction on $D$) can be regarded as the solution of 
\begin{align}\label{PonD}
\begin{cases}
\pa_t^2u_D(x,t)-\Delta u_D(x,t) +\dfrac{V_0}{|x|}\pa_tu_D(x,t)
=|u_D(x,t)|^{p-1}u_D(x,t)
\quad \text{in}\ D\times(0,T)
\\
u_D(x,t)=0\quad \text{on}\ \pa D\times(0,T), 
\\
(u_0,u_1)(0)=(u_0|_D,u_1|_D).
\end{cases}
\end{align}
Here we take $m=\frac{4}{p-1}-N+1\in (N-3,N-1)$, that is, 
$m$ satisfies $p=1+\frac{4}{N-1+m}$.
Then the assumption for the initial value can be written as follows:
\[
E_{m+1}^{\Psi}(u_D;0)
=\int_\Omega \Big(|\nabla u_0|^2+u_1^2\Big)(1+|x|)^{m+1}\,dx\leq \delta,
\]
where $\delta$ will be chosen later. 
By Proposition \ref{prop:3-1}, we have 
\begin{align}
\nonumber
&E_{m+1}^{\Psi}(u_D;t)
+
\delta_m
\int_0^tE_{m}^{\Psi}(u_D;s)
\,ds
\\
\label{est:NDW}
&\leq 
K_m
\left(
E_{m+1}^{\Psi}(u_D;0)
+
\int_0^t\int_D |u_D(s)|^{2p}|x|\Psi(s)^{m+1}\,dx\,ds
\right)
\end{align}
for $t\in [0,T)$. 
Observe that 
Lemma \ref{lem:embed-1} 
with $\mu=m+\frac{p-1}{p}$ 
and 
$\mu=0$ 
respectively implies 
\begin{align*}
\int_D
	|u_D|^{2p}|x|^{m+2}
\,dx
&\leq 
C_{GN, m+\frac{p-1}{p}}\left(
\int_D
	|\nabla u_D|^2|x|^{m+\frac{p-1}{p}}
\,dx
\right)^{p}
\\
&\leq 
C_{GN, m+\frac{p-1}{p}}\left(
\int_D
	|\nabla u_D|^2|x|^{m+1}
\,dx
\right)^{p-1}
\int_D
	|\nabla u_D|^2|x|^{m}
\,dx
\\
&\leq 
C_{GN, m+\frac{p-1}{p}}\Big(
E^{\Psi}_{m+1}(u_D;t)
\Big)^{p-1}
\int_D
	|\nabla u_D|^2\Psi^{m}
\,dx
\end{align*}
and 
\begin{align*}
&(1+t)^{m+1}\int_D
	|u_D|^{2p}|x|
\,dx
\\
&\leq 
C_{GN,0}^\theta
(1+t)^{m+1}\left(\int_D
	|u_D|^{2p}|x|^{m+2}
\,dx
\right)^{1-\theta}
\left(\int_D
	|u_D|^{2p}|x|^{-2+(N-2)(p-1)}
\,dx
\right)^{\theta}
\\
&\leq 
C_{GN,0}^\theta
(1+t)^{(m+1)p\theta-\theta}\left(\int_D
	|u_D|^{2p}|x|^{m+2}
\,dx
\right)^{1-\theta}
\left(\int_D
	|\nabla u_D|^{2}
\,dx
\right)^{p\theta}
\\
&\leq 
C_{GN,0}^\theta
\left(\int_D
	|u_D|^{2p}|x|^{m+2}
\,dx
\right)^{1-\theta}
\left(
\int_D
	|\nabla u_D|^{2}\Psi^{m+1}
\,dx
\right)^{(p-1)\theta}
\left(
\int_D
	|\nabla u_D|^{2}\Psi^m
\,dx\right)^{\theta}, 
\end{align*}
where $\theta=\frac{m+1}{m+4-(N-2)(p-1)}$. 
In view of the above inequalities, 
by $(1+t)^{m+1}+|x|^{m+1}\leq 2\Psi^{m+1}$
we deduce 
\[
\int_0^t\!\!
\int_D \big||u_D(s)|^{p-1}u_D(s)\big|^2\Psi(s)^{m+1}|x|\,dx\,ds
\leq 
C
\Big(
E^{\Psi}_{m+1}(u_D;t)
\Big)^{p-1}
\int_0^tE^{\Psi}_{m}(u_D;s)\,ds
\]
for some positive constant $C$. 
Therefore putting 
\[
M_{m+1}(t)=E_{m+1}^{\Psi}(u_D;t)
+
\delta_m
\int_0^tE_{m}^{\Psi}(u_D;s)
\,ds, 
\]
by \eqref{est:NDW} we obtain 
\[
M_{m+1}(t)
\leq 
C'\Big(\delta+ \big(
M_{m+1}(t)
\big)^{p}
\Big)
\]
for some positive constant $C'$. If we choose $\delta$ sufficiently small, then
by the continuity of $M_m$
we obtain that there exists a positive constant $C''$ 
independent of $T$ such that  
\[
\int_\Omega 
\Big(|\nabla u(t)|^2+\big(\pa_t u(t)\big)^2\Big)(1+t+|x|)^{m+1}\,dx
\leq 
M_{m+1}(t)
\leq C''\delta, \quad t\in [0,T).
\]
The blowup alternative (Lemma \ref{lem:alternative})
with the above estimate implies 
$T_{\max}=\infty$, that is,  
there exists a global-in-time (weak) solution $u$
which satisfies
\[
\int_\Omega\Big(|\nabla u|^2+(\pa_tu)^2\Big)(1+t+|x|)^{m+1}\,dx
\leq C''\delta, \quad t\in [0,\infty).
\]
{\bf Step 2 (initial data with non-compact supports).}
In this case we use a family of cut-off functions $\{\zeta_n\}_{n\in\N}$ defined by
\[
\zeta_n(x)=\eta\left(\frac{\log|x|}{n}\right), \quad x\in \Omega
\]
where $\eta\in C^1(\R)$ satisfies 
$\eta(s)=1$ on $(-\infty,1]$, 
$\eta(s)=0$ on $[2, \infty)$, 
$\eta'(s)\leq 0$ on $\R$.  
Assume that 
\[
\int_\Omega \Big(|\nabla u_0|^2+u_1^2\Big)(1+|x|)^{m+1}\,dx\leq \frac{\delta}{3}.
\]
By Lemma \ref{lem:hardy}, 
we see by $1+|x|\leq 2^{\frac{m}{m+1}}(1+|x|^{m+1})^{\frac{1}{m+1}}$ that 
\begin{align*}
&
\int_\Omega \Big(|\nabla (\zeta_nu_0)|^2+(\zeta_nu_1)^2\Big)(1+|x|)^{m+1}\,dx
\\
&\leq 
\int_\Omega \zeta_n^2\Big(|\nabla u_0|^2+u_1^2\Big)(1+|x|)^{m+1}\,dx
\\
&\quad+
\frac{2\|\eta'\|_{L^\infty}}{n}\int_\Omega \zeta_n|u_0\nabla u_0|
\frac{(1+|x|)^{m+1}}{|x|}\,dx
+
\frac{\|\eta'\|_{L^\infty}^2}{n^2}\int_\Omega u_0^2\frac{(1+|x|)^{m+1}}{|x|^2}\,dx
\\
&\leq 
2\int_\Omega \Big(|\nabla u_0|^2+u_1^2\Big)(1+|x|)^{m+1}\,dx
+
2^{m+1}\frac{\|\eta'\|_{L^\infty}^2}{n^2}\int_\Omega u_0^2
\left(\frac{1}{|x|^2}+|x|^{m-1}\right)\,dx
\\
&\leq 
2\int_\Omega \Big(|\nabla u_0|^2+u_1^2\Big)(1+|x|)^{m+1}\,dx
+
\frac{C'''}{n^2}\int_\Omega |\nabla u_0|^2(1+|x|^{m+1})\,dx
\end{align*}
for some positive constant $C'''$. 
Therefore if $n\geq \sqrt{C'''}$, then 
\[
\int_\Omega \Big(|\nabla (\zeta_nu_0)|^2+(\zeta_nu_1)^2\Big)(1+|x|)^{m+1}\,dx
\leq \delta. 
\]
Then by the conclusion of Step 1,  
for every $n\in \N$, the problem 
\begin{align}\label{P_n}
\begin{cases}
\pa_t^2u_n(x,t)-\Delta u_n(x,t) +\dfrac{V_0}{|x|}\pa_tu_n(x,t)
=|u_n(x,t)|^{p-1}u_n(x,t)
\quad \text{in}\ \Omega\times(0,\infty), 
\\
u_n(x,t)=0\quad \text{on}\ \pa \Omega\times(0,\infty), 
\\
(u_n,u_n)(0)=(\zeta_n u_0,\zeta_n u_1)
\end{cases}
\end{align}
has a unique (global-in-time) solution $u_n$ satisfying 
\[
\int_\Omega\Big(|\nabla u_n|^2+(\pa_tu_n)^2\Big)(1+t+|x|)^{m+1}\,dx
\leq C''\delta, \quad t\in [0,\infty).
\]
It is worth noticing that the constant $C$ is independent of $n$ (the size of support of initial data). 
Since the solution of \eqref{P} has a continuous dependence for 
the initial data in $H^1_0(\Omega)\times L^2(\Omega)$, 
letting $n\to \infty$, we can obtain that the 
sequence $\{u_n\}_{n\in\N}$ 
converges to $u$ in $C([0,\infty);H^1_0(\Omega))\cap C^1([0,\infty);L^2(\Omega))$ (uniformly in any compact interval) 
which is the global-in-time weak solution of \eqref{P}.   
The proof is complete. 
\end{proof}

\section{Remark on small data blowup when \texorpdfstring{$1<p\leq1+\frac{2}{N-1}$}{}}\label{sec:SDBU}

To end the present paper, we prove 
the blowup phenomena for arbitrary small initial data
(Proposition \ref{prop:blowup}).
\begin{proof}[Proof of Proposition \ref{prop:blowup}]
We may assume $T_{\max}>T_0=\max\{1, 2R_0,R_0^2\}$. 
Observe that $\psi(x)=1-|x|^{2-N}$ is the harmonic function 
satisfying the Dirichlet boundary condition on $\pa\Omega$. 
Fix $\eta\in C^\infty(\R)$
satisfying $\eta'(s)\leq 0$, 
$\eta(s)=1$ for $s\in (-\infty,\frac{1}{2}]$ 
and $\eta(s)=0$ for $s\in [1,\infty)$.  
Setting $\eta_T(t)=\eta(t/T)$, 
we have
\begin{align*}
\int_\Omega |u|^p\psi\eta_T^{2p'}\,dx
&=
\int_\Omega \Big(\pa_t^2u-\Delta u+\frac{V_0}{|x|}\pa_tu\Big)\psi\eta_T^{2p'}\,dx
\\
&=\frac{d}{dt}
\int_\Omega 
\Big(\pa_t u\eta_T^{2p'}
-u\pa_t(\eta_T^{2p'})+\frac{V_0}{|x|}u\eta_T^{2p'}\Big)\psi\,dx
\\
&\quad+
\int_\Omega 
u\Big(\pa_t^2\eta_T^{2p'}-\frac{V_0}{|x|}\pa_t(\eta_T^{2p'})\Big)\psi\,dx, 
\end{align*}
where $p'=p/(p-1)$ is the H\"older conjugate of $p$. 
Noting the finite propagation property ${\rm supp}\,u(t)\subset \overline{B(0,R_0+t)}$
and 
integrating the above inequality over $[0,T]$, 
we see by the Young inequality that
\begin{align*}
&\int_\Omega 
\Big(u_1+\frac{V_0}{|x|}u_0\Big)\psi\,dx
+
\int_0^T\!\!
\int_\Omega|u|^p\psi\eta_T^{2p'}\,dx\,dt
\\
&=
2p'\int_{T/2}^T\!\int_{\Omega}
u\Big(\eta_T|\pa_t^2\eta_T|+(2p'-1)|\pa_t\eta_T|^2
-\frac{V_0}{|x|}\eta_T\pa_t\eta_T\Big)\eta_T^{2p'-2}\psi\,dx\,dt
\\
&\leq 
C\|\eta\|_{W^{2,\infty}}^2
\left(
\int_{T/2}^T\!
\int_\Omega|u|^p\psi\eta_T^{2p'}\,dx\,dt
\right)^{\frac{1}{p}}
\left(
\int_{T/2}^T\!\int_{B(0,R_0+t)\setminus B(0,1)}
\Big(
\frac{1}{T^2}+\frac{1}{T|x|}\Big)^{p'}\,dx\,dt
\right)^{1-\frac{1}{p}}. 
\end{align*}
Noting that
\[
\int_{T/2}^T\!\int_{B(0,R_0+t)\setminus B(0,1)}
\Big(
\frac{1}{R^2}+\frac{1}{R|x|}\Big)^{p'}\,dx\,dt
\leq 
\begin{cases}
CT^{-\frac{1}{p-1}}
&\text{if}\ 1<p<\frac{N}{N-1},
\\
CT^{N-1-\frac{2}{p-1}}\log T
&\text{if}\ p=\frac{N}{N-1},
\\
CT^{N-1-\frac{2}{p-1}}
&\text{if}\ p>\frac{N}{N-1},
\end{cases}
\]
we can see that if $1<p<1+\frac{2}{N-1}$, then 
\[
0<\int_\Omega 
\Big(u_1+\frac{V_0}{|x|}u_0\Big)\psi\,dx
\leq 
\begin{cases}
CT^{-\frac{1}{p-1}}
&\text{if}\ 1<p<\frac{N}{N-1},
\\
CT^{N-1-\frac{2}{p-1}}\log T
&\text{if}\ p=\frac{N}{N-1},
\\
CT^{N-1-\frac{2}{p-1}}
&\text{if}\ p>\frac{N}{N-1}.
\end{cases}
\]
Since the right-hand side of the above inequality 
converge to $0$ as $T\to \infty$, 
and 
the choice of $T$ is arbitrary in $(T_0,T_{\max})$, 
the lifespan $T_{\max}$ must be finite. 
In the critical case $p=1+\frac{2}{N-1}$, taking an auxiliary function
\[
Y(T)=
\int_\Omega 
\Big(u_1+\frac{V_0}{|x|}u_0\Big)\psi\,dx+
\int_0^T
\left(\int_{\tau/2}^{\tau}\!\int_\Omega|u|^p\psi\eta_\tau^{2p'}\,dx\,dt\right)\frac{d\tau }{\tau}, 
\]
as in \cite[Lemma 3.10]{IS_NA}
we can deduce $Y(T)^p\leq CTY'(T)$ for $T\in (T_0,T_{\max})$. 
This implies that $Y(T)^{1-p}$ becomes negative if $T$ can be arbitrary large, 
which contradicts to $Y(T)>0$. 
 Therefore, we obtain  an upper bound for $T_{\max}$. The proof is complete.
\end{proof}

\subsection*{Acknowedgements}
This work is partially supported 
by JSPS KAKENHI Grant-in-Aid for Young Scientists Grant Number JP18K13445.

\newpage

\end{document}